\newtheorem {theorem}{Theorem}
\newtheorem {corollary}{Corollary}
\newtheorem {definition}{Definition}
\newtheorem {example}{Example}
\newtheorem {lemma}{Lemma}
\newtheorem {proposition}{Proposition}
\newenvironment {proof}[1][Proof]{\noindent \textbf {#1.} }{\ \rule {0.5em}{0.5em}}
\newcommand{\R}{\mathbb{R}}
\newcommand{\N}{\mathbb{N}}
\newcommand{\E}{\mathbb{E}}
\renewcommand{\alph}{\alpha, [a,b]}
\let\@fnsymbol\@arabic
\begin{document}

\title{The Family of Alpha,[a,b] Stochastic Orders: \\ Risk vs. Expected Value}

\author{Bar Light\protect\thanks{Graduate School of Business,
Stanford University, Stanford, CA 94305, USA. e-mail: \textsf{barl@stanford.edu}\ } ~ and  Andres Perlroth\protect\thanks{  Google Research. e-mail: \textsf{perlroth@google.com }. Most of the research for this paper was performed while this author was a student at Stanford GSB.} } 
\maketitle

\thispagestyle{empty}

 \noindent \noindent \textsc{Abstract}:
\begin{quote}
	In this paper we provide a novel family of stochastic orders that generalizes second order stochastic dominance, which we call the $\alpha,[a,b]$-concave  stochastic orders. 
	These stochastic orders are generated by a novel set of ``very" concave functions where $\alpha$ parameterizes the degree of concavity. The $\alpha,[a,b]$-concave stochastic orders  allow us to derive novel comparative statics results for important applications in economics that cannot be derived using previous stochastic orders.	In particular, our comparative statics results are useful when an increase in a lottery's riskiness changes the agent's optimal action in the opposite direction to an increase in the lottery's expected value.  For this kind of situation, we provide a tool to determine which of these two forces dominates -- riskiness or expected value. We apply our results in consumption-savings problems, self-protection problems, and in a Bayesian game.  
	
\end{quote}

\noindent {\small Keywords: }stochastic orders;  risk; comparative statics.


\newpage 

\section{Introduction}

Stochastic orders are fundamental in the study of decision making under uncertainty and in the study of complex stochastic systems. They have been used in various fields, including economics, finance, operations research, and statistics (for a textbook treatment of stochastic orders and their applications, see \cite{muller2002comparison},  \cite{shaked2007stochastic}, or  \cite{levy2015stochastic}). In this paper we provide a family of stochastic orders that is based on a novel family of utility functions, which allows us to compare two random variables, where  one  random variable has a higher expected value and is also riskier than the other random variable. 

 For instance, consider the following two simple random variables (also called lotteries) $\tilde{Y}$ and $\tilde{X}$ described in Figure \ref{fig:1}. 
 
 \begin{figure}[h]
\includegraphics[width=4cm]{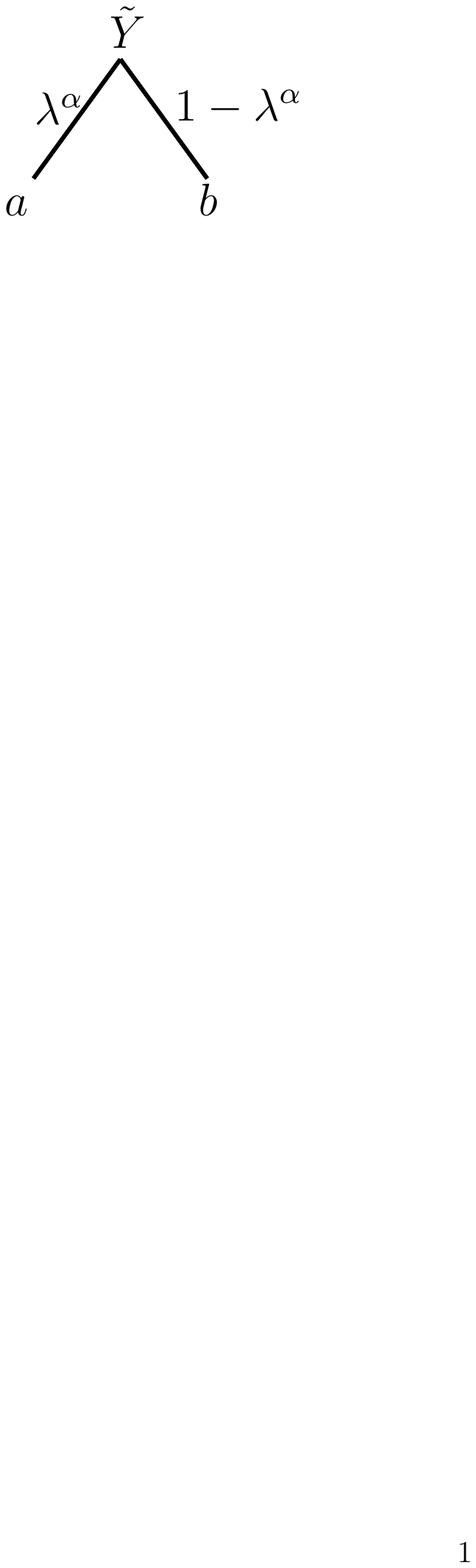} \
\includegraphics[width=4cm]{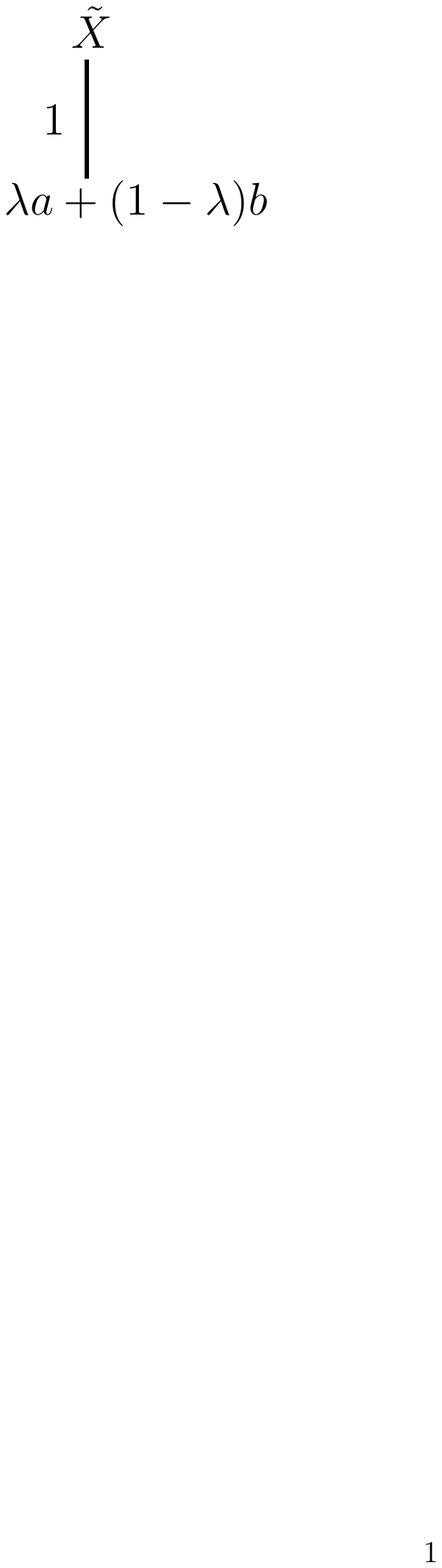}
\centering
\label{fig:1}
\caption{Example 1}
\end{figure}
 
 Lottery $\tilde{Y}$ yields $a$ dollars with probability $\lambda^{\alpha}$ and $b$ dollars with probability $1 -\lambda^{\alpha}$ where $b>a$, $\lambda \in [0,1]$, and $\alpha  \geq 1$. Lottery $\tilde{X}$ yields $\lambda a +\left (1 -\lambda \right )b$ dollars with probability $1$.  If $\alpha $ is not very high, it is reasonable to assume that most risk-averse decision makers would prefer lottery $\tilde{X}$ over lottery $\tilde{Y}$. For example, if $\alpha  =1.152$, $\lambda =0.5$, $a =0$, and $b =1,000 ,000$, then lottery $\tilde{X}$ yields $500,000$ dollars with probability $1$ while lottery $\tilde{Y}$ yields $1,000,000$ dollars with probability $0.55$ and $0$ dollars with probability $0.45$. Lottery $\tilde{Y}$ has a higher expected value ($550 ,000$ dollars) than lottery $\tilde{X}$ but a high probability (a probability of $0.45$) of receiving  $0$ dollars. Thus, in this case, it seems reasonable that most risk-averse decision makers would  prefer lottery $\tilde{X}$ over lottery $\tilde{Y}$. Note that for every $\alpha >1$, lottery $\tilde{Y}$ has a higher expected value and is riskier than lottery $\tilde{X}$. Thus, standard stochastic orders cannot compare the two lotteries. In particular, since the expected value of $\tilde{Y}$ is higher than the expected value of $\tilde{X}$, $\tilde{X}$ does not dominate $\tilde{Y}$ in most popular stochastic orders because these stochastic orders impose a ranking over expectations to determine whether $\tilde{X}$ dominates $\tilde{Y}$. In particular, $\tilde{X}$ does not dominate $\tilde{Y}$ in the second order stochastic dominance (\cite{hadar1969rules} and \cite{rothschild1970increasing}), third order stochastic dominance \citep{whitmore1970third}, higher order stochastic dominance \citep{ekern1980increasing}, decreasing absolute risk aversion stochastic dominance \citep{vickson1977stochastic}, or in the almost second order stochastic dominance \citep{leshno2002preferred}. In Section \ref{sec:2}, however, we show that the stochastic orders provided in this paper that are based on a novel set of risk-averse decision makers can compare $\tilde{X}$ and $\tilde{Y}$.

In this paper we provide a family of stochastic orders indexed by $\alpha,[a,b]$ where $\alpha \geq 1$ and $[a,b]$ is a subset of $\R$, which we call the $\alpha,[a,b]$-concave  stochastic orders. The family of $\alpha,[a,b]$-concave  stochastic orders generalizes second order stochastic dominance (SOSD),\footnote{Recall that $Y$ dominates $X$ in the second order stochastic dominance if $\mathbb{E}[u(Y)] \ge \mathbb{E}[u(X)]$ holds for every  concave and increasing function $u:[a,b] \rightarrow \R$.} which corresponds to the $1,[a,b]$-concave stochastic order.  The main idea of the $\alpha,[a,b]$-concave  stochastic orders is that the inequality  $\mathbb{E}[u(Y)] \ge \mathbb{E}[u(X)]$ is required to hold only for a subset of the concave and increasing functions (and not for all of them) in order to determine that a random variable $Y$ dominates a random variable $X$ in the $\alpha,[a,b]$-concave  stochastic order. In particular, the inequality $\mathbb{E}[u(Y)] \ge \mathbb{E}[u(X)]$ is not required to hold for a function $u$ that is affine or for a function $u$ that is nearly affine in the sense that the elasticity of  $u'$ with respect to $u$ is bounded below by a number that depends on $\alpha$.  This elasticity measures the function's concavity degree in a natural way and relates to the coefficients of prudence and risk aversion (see Section \ref{sec:2} for more details).

An important feature of the $\alpha,[a,b]$-concave  stochastic orders is that for $\alpha>1$, $Y$ dominating $X$ in these orders does not imply that $\E[Y]$ has to be lower than $\E[X]$, nor does it imply the opposite. In Section \ref{sec:2}  we provide examples of random variables $X$ and $Y$ where $X$ has a higher expected value and is riskier than $Y$, and $Y$ dominates $X$ in the $\alpha,[a,b]$-concave  stochastic order. For instance, we show that $\tilde{X}$ dominates $\tilde{Y}$ in the $\alpha,[a,b]$-concave  stochastic order for the example presented in Figure~\ref{fig:1}. Another feature of the $\alpha,[a,b]$-concave stochastic orders is their dependence on the support of the distribution. We show that this dependence is helpful for applications where agents' behavior depends on their wealth level. We illustrate this in a consumption-savings example (see Section \ref{sec: Motivating}). 



For general random variables it is not trivial to check whether a random variable dominates another random variable in the $\alpha,[a,b]$-concave  stochastic orders.  Finding a simple integral condition to characterize stochastic orders that generalize SOSD is impossible or not trivial (see \cite{gollier2018new}). However, we provide a sufficient condition for domination in the $\alpha,[a,b]$-concave  stochastic order that is based on a simple integral inequality (see Section \ref{sec:2}). Similar integral conditions are used to determine whether a random variable dominates another random variable in other popular stochastic orders. The sufficient condition generates a stochastic order that is of independent interest and can be easily used in applications. We partially characterize the maximal generator of this new stochastic order (see Appendix \ref{section:maximal}) for $\alpha=2$.

 To illustrate the usefulness of the family of $\alpha,[a,b]$-concave  stochastic orders, we derive novel comparative statics results in three applications from the economics literature. 
The first application is a consumption-savings problem with labor income uncertainty. It is established in previous literature that a prudent agent  (i.e., an agent whose utility function has a positive third derivative) saves more if the labor income risk increases in the sense of SOSD (see \cite{leland1968saving}). It is also easy to establish that the agent's current savings increase if the labor income's expected present value increases. We do not know of any comparative statics results for the case when both the present value and the risk of future labor income increase. We show that under certain conditions on the agent's marginal utility (the marginal utility must be ``very convex''), an increase in the risk of future labor income together with an increase in the expected present value of future  labor income increase savings. That is,  the precautionary saving motive is stronger than the permanent income motive.

The second application deals with self-protection problems. We consider a standard self-protection problem (e.g., \cite{ehrlich1972market}) where choosing a  higher action is more costly but reduces the probability of a loss. Stochastic orders can be used as a tool to decide whether the level of self-protection should be higher or lower. For a decision maker that makes decisions according to the decision rule implied by the $\alpha,[a,b]$-concave   stochastic order, we provide conditions that imply a change in the level of self-protection. 

In our third application, we show that the $\alpha,[a,b]$-concave stochastic order can be used in a non-cooperative framework as well. We study a Bayesian game which is a variant of the search model studied in \cite{diamond1982aggregate}  and in  \cite{milgrom1990rationalizability}. In this game, there are two players that exert a costly effort to achieve a match, and the probability of a match occurring depends on the effort exerted by both. 
We analyze how different beliefs affect the equilibrium probability of matching.

Our $\alpha,[a,b]$-concave  stochastic orders are also useful in proving inequalities that involve convex functions. To show the usefulness of these stochastic orders in proving inequalities, we prove a novel  Hermite-Hadamard  type inequality for decreasing functions $u:[a,b] \rightarrow \R$ such that the square root of $u(x)-u(b)$ is convex (see Section \ref{sec:hh ineq}).  

There is extensive literature on stochastic orders and their applications (for a survey see \cite{muller2002comparison} and \cite{shaked2007stochastic}). The stochastic orders  we study in this paper are integral stochastic orders \citep{muller1997stochastic}. Integral stochastic orders $\succeq _{\mathfrak{F}}$ are binary relations over the set of random variables that are defined by a set of functions $\mathfrak{F}$ in the following way: for two random variables $X$ and $Y$ we have $Y\succeq _{\mathfrak{F}}X$ if and only if $\mathbb{E}[u(Y)] \ge \mathbb{E}[u(X)]$ for every $u \in \mathfrak{F}$ and the expectations exist. Many important stochastic orders are integral stochastic orders. For example, SOSD corresponds to the stochastic order $\succeq _{\mathfrak{F}}$ where $\mathfrak{F}$ is the set of all concave and increasing functions.

The integral stochastic orders we present in this paper are related to stochastic orders that weaken SOSD by restricting the set of utility functions under consideration. Third order stochastic dominance \citep{whitmore1970third} requires that the functions have a positive third derivative. Higher stochastic orders (see \cite{ekern1980increasing}, \cite{denuit1998s}, and  \cite{eeckhoudt2006putting}) restrict the sign of the functions' higher derivatives.  \cite{leshno2002preferred}, \cite{tsetlin2015generalized}, and \cite{muller2016between} restrict the values of the functions' derivatives.  \cite{vickson1977stochastic} and \cite{post2014linear} add the assumption that the  functions are in the decreasing absolute risk aversion class. \cite{post2016standard} requires additional curvature conditions on the functions' higher derivatives. 

The above stochastic orders are significantly different from the stochastic orders we introduce in this paper. All these stochastic orders impose a ranking over expectations, while the stochastic orders presented in this paper do not impose a ranking over expectations. Other known stochastic orders that do not impose a ranking over expectations are introduced in \cite{fishburn1976continua} and in  \cite{meyer1977choice}. \cite{meyer1977choice} imposes a lower and an upper bound on the Arrow-Pratt absolute risk-aversion measure (see more details on this stochastic order in Appendix \ref{section:maximal}). \cite{fishburn1976continua,fishburn1980stochastic} studies a stochastic order that is based on lower partial moments. While these stochastic orders are based on an integral condition, the main disadvantage of these stochastic orders is that their maximal generator is not known (see more details in Appendix \ref{section:maximal}).

The paper is organized as follows. In Section \ref{sec: Motivating} we study a consumption-savings problem that illustrates the usefulness of our stochastic orders. In Section \ref{sec:2} we define the $\alpha,[a,b]$-concave stochastic orders and study their properties. In Section~\ref{sec:3} we study the  applications discussed above. Section~\ref{sec:con} contains concluding remarks. The Appendix contains the proofs not presented in the main text and a discussion on the maximal generator of stochastic orders.

\subsection{A motivating application: A consumption-savings problem} \label{sec: Motivating}
Researchers have devoted a great deal of attention to analyzing the impact of future income uncertainty, in particular, on savings decisions.\footnote{For recent results see \cite{crainich2013even},  \cite{nocetti2015robust}, \cite{light2017precautionary}, \cite{lehrer2018effect},  \cite{bommier2018risk}, and \cite{baiardi2019theory}. We note that our  comparative statics results are significantly different from the results in the papers above,  because we consider the case that both the present value and the risk of future income increase. In the papers mentioned above,  stochastic orders that impose a ranking over expectations such as the second order stochastic dominance or higher order stochastic dominance are used.} In their seminal papers, \cite{leland1968saving} and \cite{sandmo1970effect} show that in a two-period consumption-savings problem for a prudent agent (i.e., an agent whose marginal utility is convex), if the labor income {\em risk} increases in the sense of second order stochastic dominance, then the agent's savings increase. The agent's savings are also affected by the {\em expected present value} of future labor income: an increase in the expected present value decreases current savings. 
Up to now, to the best of our knowledge, no stochastic order has been provided that can be used to derive comparative statics results for the case when both the present value and the risk of future income increase. For instance, consider the two labor income distributions described in Figure 2.

\begin{figure}[h]
\includegraphics[width=4cm]{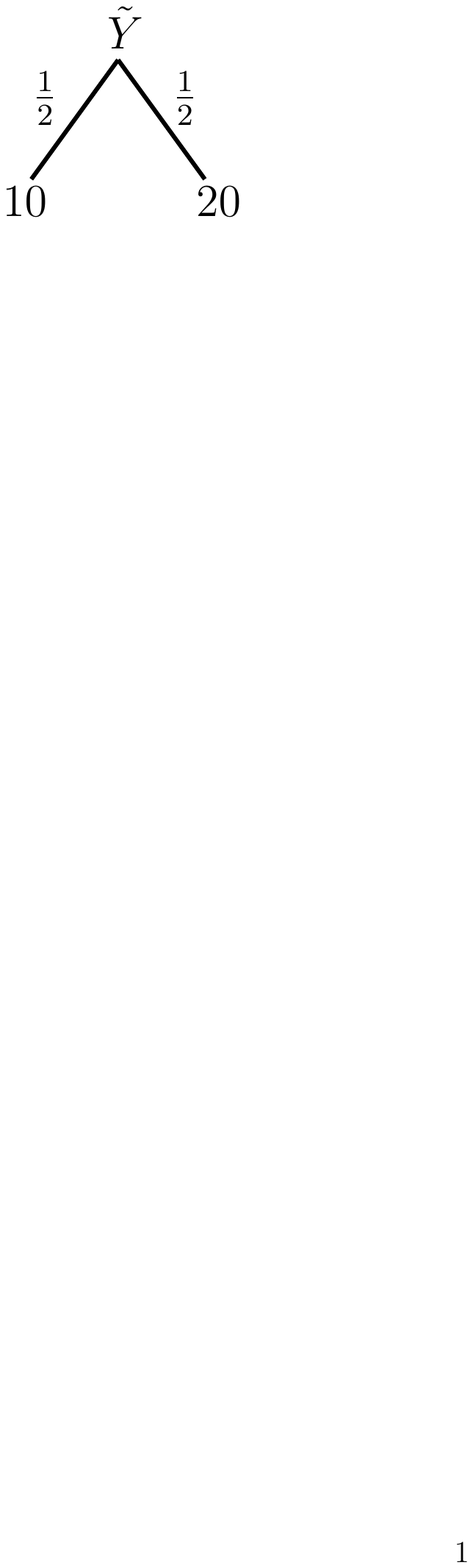}
\includegraphics[width=4cm]{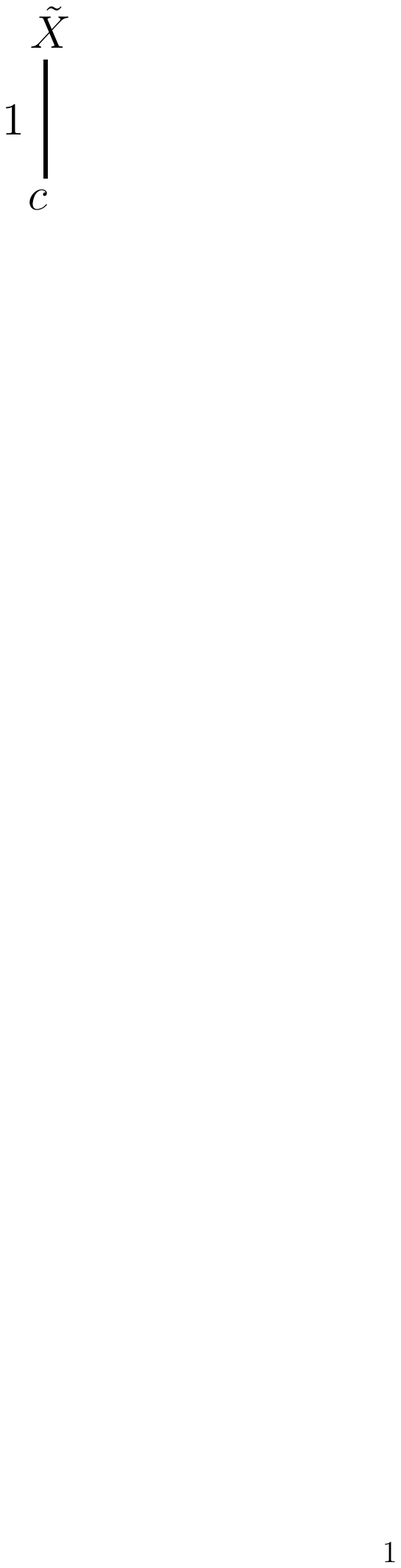}
\centering
\label{fig:2}
\caption{Future labor income}
\end{figure}

Under which distribution should we expect to observe higher savings? 
For $c\ge 15$, $\tilde{Y}$ is riskier than $\tilde{X}$, in the sense of SOSD. Thus, in the expected utility framework, savings are higher under $\tilde{Y}$ than under $\tilde{X}$ (see \cite{sandmo1970effect}). In the case that $c< 15$, it is easy to see that $\tilde{X}$ and $\tilde{Y}$ cannot be compared by SOSD. In this case, there is a {\em trade-off} between the agent's future income {\em risk} versus the agent's future income {\em present value}. Using the techniques developed in this paper we derive comparative statics results in the presence of this trade-off. We now describe the two-period consumption-savings problem that we study. 

An agent decides how much to save and how much to consume while his next period's income is uncertain. If the agent has an initial wealth of $x$ and he decides to save $0 \leq s \leq x$, then the first period's utility is given by $u(x-s)$ and the second period's utility is given by $u(Rs +y)$ where $y$ is the next period's income, $R$ is the rate of return, and $u$ describes the agent's utility from consumption. The agent
chooses a savings level to maximize his expected utility:
\begin{equation*}h (s ,F):=u (x -s) + \int  _{0}^{\overline{y}}u(Rs +y)dF(y)
\end{equation*}
where the distribution of the next period's income $y$ is given by $F$. The support of $F$ is given by $[0 ,\overline{y}]$. We assume that the agent's utility function $u$ is strictly increasing, strictly concave, and continuously differentiable.

Let $g (F) =\ensuremath{\operatorname*{argmax}}_{s \in C(x)}h(s ,F)$ be the optimal savings under the distribution $F$ where we denote by $C(x) : =[0,x]$  the interval from which the agent may choose his level of savings when his wealth is $x$.

Let $ \succeq _{I}$ be the first order stochastic dominance order and $ \succeq _{CX}$ be the convex stochastic order.\protect\footnote{
Recall that $F \succeq _{I}G$ if and only if $\int u(x)dF(x) \geq \int u(x)dG(x)$ for every increasing function $u$ and $F \succeq _{CX}G$ if and only if $\int u(x)dF(x) \geq \int u(x)dG(x)$ for every convex function $u$.
} Two well known facts about the effect of the future income's distribution on savings decisions are the following:

\begin{proposition}
(i) If $F\succeq _{I}G$ then $g(G) \geq g(F)$. 

(ii) If $F\succeq _{CX}G$ and $u^{ \prime }$ is convex then $g(F) \geq g(G)$.
\end{proposition}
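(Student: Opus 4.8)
The plan is to establish both parts via first-order conditions, exploiting the concavity of $h(\cdot,F)$ in $s$ and the monotone comparative statics that comes from how the income distribution enters the objective. Since $u$ is strictly concave, $h(s,F)$ is strictly concave in $s$, so $g(F)$ is a well-defined single-valued function and the optimal savings level is pinned down by the Kuhn-Tucker conditions for the constraint $s\in[0,x]$. First I would record the derivative $\partial h/\partial s = -u'(x-s) + R\int_0^{\overline y} u'(Rs+y)\,dF(y)$, and note that for interior optima this equals zero, while at the boundary $s=0$ it is $\le 0$ and at $s=x$ it is $\ge 0$. The whole argument then reduces to tracking how the term $\int u'(Rs+y)\,dF(y)$ moves as $F$ changes in the relevant stochastic order, and invoking a standard single-crossing / monotonicity-of-the-optimizer argument.

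For part (i): if $F\succeq_I G$, then because $y\mapsto u'(Rs+y)$ is a \emph{decreasing} function (as $u$ is concave, $u'$ is decreasing), first order stochastic dominance gives $\int u'(Rs+y)\,dF(y) \le \int u'(Rs+y)\,dG(y)$ for every fixed $s$. Hence $\partial h(s,F)/\partial s \le \partial h(s,G)/\partial s$ pointwise in $s$. Combined with strict concavity of $h$ in $s$, this implies that the maximizer under $F$ is no larger than the maximizer under $G$, i.e. $g(G)\ge g(F)$; I would spell this out by the usual argument (if $g(F) > g(G)$ then at $s=g(F)$ we'd have $\partial h(s,F)/\partial s \le \partial h(s,G)/\partial s < 0$ by concavity, contradicting optimality of $g(F)$ unless $g(F)=0$, and the boundary cases are handled similarly).

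For part (ii): if $F\succeq_{CX} G$ \emph{and} $u'$ is convex, then for each fixed $s$ the map $y\mapsto u'(Rs+y)$ is convex, so the convex order gives $\int u'(Rs+y)\,dF(y) \ge \int u'(Rs+y)\,dG(y)$, hence $\partial h(s,F)/\partial s \ge \partial h(s,G)/\partial s$ for all $s$, and the same strict-concavity argument as above now yields $g(F)\ge g(G)$. The only mild subtlety — and the step I would be most careful about — is the passage from the pointwise inequality between the derivatives to the inequality between the argmaxes, including the corner solutions where the nonnegativity constraint $s\ge0$ or the budget constraint $s\le x$ binds; this is routine monotone comparative statics for a strictly concave objective but deserves one or two clean sentences. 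Everything else (existence and uniqueness of $g$, differentiability under the integral sign given $u\in C^1$ and the compact support $[0,\overline y]$) is standard and I would state it briefly rather than belabor it.
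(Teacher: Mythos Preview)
Your argument is correct. The paper does not actually prove this proposition: it is stated as ``well known facts'' with references to \cite{leland1968saving} and \cite{sandmo1970effect}, so there is no proof in the paper to compare against.

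That said, your approach is exactly the template the paper uses for its own Proposition~\ref{prop:consumption-savings}, which extends these results to the $2,[0,Rx+\overline{y}]$-convex stochastic order. There, the paper computes the partial derivative $h_s(s,F)$, shows that the relevant stochastic order yields a pointwise inequality $h_s(s,F)\le h_s(s,G)$ (or $\ge$), and then invokes Theorem~6.1 of \cite{topkis1978} to pass from the pointwise derivative inequality to the ordering of the argmaxes. The only difference from what you wrote is that the paper appeals to Topkis's result for the last step, whereas you argue it directly from strict concavity and first-order conditions; both are fine, and your direct argument is arguably cleaner here since $h$ is strictly concave in $s$ and the argmax is a singleton.
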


Part (i) of the last Proposition states that if the future income's distribution is better in the sense of first order stochastic dominance, then the current savings are lower. The additional consumption follows from the permanent income motive, i.e., the agent wants to smooth consumption. Part (ii) of the last Proposition states that when the future  income's distribution is riskier in the sense of the convex stochastic order, then the current savings are higher. The additional savings are  called precautionary saving. 

In the Proposition \ref{prop:consumption-savings} in Section 3.1 we consider the case that the income's distribution is better (it has an higher expected value) and riskier. We show that when the agent's marginal utility is a $2,[0 ,Rx +\overline{y}]$-convex function then the precautionary saving motive is stronger than the permanent income motive. The condition that $u'$ is a $2,[0 ,Rx +\overline{y}]$-convex function guarantees that the agent's marginal utility is ``very" convex (that is, the agent is ``very" prudent) so that the agent prefers to save more under the riskier income distribution even though it has a higher expected value. This condition is satisfied by a large class of utility functions (see Section \ref{sec:3} for examples).

Our results show the potential importance of prudence as a first order consideration in policy design. If the agents are ``very" prudent, then an increase in an agent's permanent income together with an increase in future income uncertainty reduces consumption. Thus, in an economy where agents are ``very" prudent, reducing the agents' future income uncertainty can be the focus of a policy maker who aims to increase the short-run consumption. When the labor income uncertainty increases (which is a typical feature of a recession), a policy that focuses only on increasing  permanent income might lead to a decrease in consumption.

The application presented in this section uncovers two key advantages of the stochastic orders presented in this paper. First, we derive comparative statics results when an increase in the lottery's (future labor income)  riskiness increases the agent's optimal action (savings), but an increase in the lottery's expected value decreases the agent's optimal action that cannot be derived using previous results. We relate these results to the convexity of the agent's marginal utility (prudence). Second, the comparative statics results depend on the support of the distribution. The agent's marginal utility needs to be  ``very" convex only on a relevant local region of possible outcomes that depends on the agent's initial wealth level.  Hence, savings decisions' dependence on future labor income could depend on agent's wealth under our approach. Furthermore, because the $\alpha,[a,b]$-concave stochastic orders are stronger when $b$ is lower (see Section \ref{sec:2}) we can obtain sharper results for lower wealth level, i.e., we show that the precautionary motive is stronger as the wealth level decreases.


\section{The $\alpha,[a,b]$-concave stochastic order}\label{sec:2}

In this section we introduce and study the family of $\alpha,[a,b]$-concave  stochastic orders. 
We first introduce the set of $\alpha$-convex functions.\footnote{The space of $\alpha$-convex functions has been studied in the field of convex geometry (see \cite{lovasz1993random} and \cite{fradelizi2004extreme}). The $\alpha$-convex functions are also used in \cite{acemoglu2015robust} and \cite{jensen2017distributional} to derive comparative statics results in consumption-savings problems.}  
\begin{definition}\label{definition:1}
	Let $\alpha\ge 1$. We say that $u:\R\to \R_+$ is $\alpha$-convex, if $u^{\frac 1 \alpha}$ is a convex function.
\end{definition}


If $u$ is $\alpha$-convex and twice differentiable, then $u$ is $\alpha$-convex if and only if $(u(x)^{\frac 1 \alpha})'' \ge 0$. Thus, a twice differentiable function $u:\R\to \R_+$  is $\alpha$-convex if and only if $$ u(x) u''(x)\ge{ u'(x)^2 \frac{\alpha - 1}\alpha }\; \mbox{ for every }x.$$

We now introduce the set of functions that generate the family of stochastic orders that we study in this paper.\footnote{In the context of stochastic orders, one disadvantage of the set of $\alpha$-convex functions is that this set does not include the negative constant functions. This fact implies that the maximal generator of the stochastic order generated by the set of $\alpha$-convex functions might not be equal to the set of $\alpha$-convex functions. In Appendix \ref{section:maximal} we show that the stochastic order generated by the set of $\alpha$-convex functions is essentially equivalent to the second order stochastic dominance. Importantly, the set $\mathcal {I}_{\alpha ,[a,b]}$ that generates the stochastic orders we introduce in this paper is convex, closed and contain all the constant function, and hence, the set $\mathcal {I}_{\alpha ,[a,b]}$ equals its maximal generator  (see Appendix \ref{section:maximal}).} 
Let $\mathcal{B}_{[a,b]}$
be the set of bounded and measurable functions from $[a,b]$ to $\R$. For the rest of the paper we say that a function $u$ is decreasing if it is weakly decreasing, i.e., $x<y$ implies $u(x) \geq u(y)$. We say that $u$ is increasing if $-u$ is decreasing.

\begin{definition}
	Fix $\alpha\ge 1$ and $[a,b]\subseteq \R$. Let
\begin{align}
\mathcal{I}_{\alpha ,[a,b]}&=\{u\in \mathcal{B}_{[a,b]}\;| \; u \mbox{ is increasing}, \  u(b)-u(x) \mbox{ is } \alpha\mbox{-convex} \}. 
\end{align}	
 Let $F$ and $G$  be two cumulative distribution functions on $[a,b]$.\footnote{In the rest of the paper, all functions are assumed to be integrable. All the results in this paper can be extended to the case that $a = -\infty$.} We say that $F$ dominates $G$ in the $\alpha,[a,b]$-concave  stochastic order, denoted by $F \succeq_{\alph-I} G$, if for every $u\in \mathcal{I}_{\alpha ,[a,b]}$ we have
		$$\int_a^b u(x) dF(x)\ge \int_a^b u(x)dG(x).$$
\end{definition}

For the rest of the paper we say that $u$ is a $\alpha,[a,b]$-concave function if $u \in \mathcal{I}_{\alpha ,[a,b]}$ and that $u$ is a $\alpha,[a,b]$-convex function if $-u \in  \mathcal{I}_{\alpha ,[a,b]}$. 
For two random variables $X$ and $Y$ with distribution functions $F$ and $G$, respectively, we write  $X \succeq_{\alph-I} Y$ if and only if $F \succeq_{\alph-I} G$.

The family of $\alpha,[a,b]$-concave  stochastic orders generalizes second order stochastic dominance (SOSD), which corresponds to the $1,[a,b]$-concave stochastic order. For $\alpha>1$, the $\alpha,[a,b]$-concave   stochastic order is weaker than SOSD in the sense that if $X$ dominates $Y$ in the SOSD, then $X$ dominates $Y$ in the $\alpha,[a,b]$-concave  stochastic order but the converse is not true. The idea of the $\alpha,[a,b]$-concave  stochastic orders is that the inequality $\mathbb{E}[u(Y)] \ge \mathbb{E}[u(X)]$ is required to hold only for a subset of the concave and increasing functions (and not for all of them as in SOSD) in order to determine that a random variable $Y$ dominates a random variable $X$ in the $\alpha,[a,b]$-concave  stochastic order. The inequality $\mathbb{E}[u(Y)] \ge \mathbb{E}[u(X)]$ is required to hold for all the functions that belong to the set $\mathcal{I}_{\alpha ,[a,b]}$ of $\alpha,[a,b]$-concave functions which is a subset of the concave and increasing functions. As we explain below, the set $\mathcal{I}_{\alpha ,[a,b]}$ contains ``very" risk aversion decision makers where the degree of risk aversion is parameterized by $\alpha$.

\textbf{Motivation for introducing the set $\mathcal{I}_{\alpha  ,[a ,b]}$.} When $\alpha$ increases there are fewer decision makers that need to prefer $Y$ to $X$ in order to conclude that $Y$ dominates $X$ in the $\alpha  ,[a ,b]$-concave stochastic order. That is, for $\alpha _{2} >\alpha _{1}$ we have $\mathcal{I}_{\alpha _{2} ,[a ,b]} \subset \mathcal{I}_{\alpha _{1} ,[a ,b]}$ (see Proposition \ref{prop:2} in the appendix). Informally, the decision makers $u \in \mathcal{I}_{\alpha _{1} ,[a ,b]}\backslash \mathcal{I}_{\alpha _{2} ,[a ,b]}$ that are excluded from the set $\mathcal{I}_{\alpha _{1} ,[a ,b]}$ when using the $\alpha_{2},[a ,b]$-concave stochastic order instead of using the $\alpha _{1},[a,b]$-concave stochastic order are the decision makers that are the closest to being risk neutral in the set $\mathcal{I}_{\alpha _{1} ,[a ,b]}$. In other words, the decision makers $u \in \mathcal{I}_{\alpha _{1} ,[a ,b]}\backslash \mathcal{I}_{\alpha _{2} ,[a ,b]}$ have the least concave function in the set $\mathcal{I}_{\alpha _{1} ,[a ,b]}$ where the degree of concavity is measured by the elasticity of the marginal utility function with respect to the utility function. To see this, note that for a twice continuously differentiable function $u$ with the normalization $u(b)=0$,\footnote{From a decision theory point of view, we can normalize $u(b)=0$ without changing the preferences of the decision maker.}  we have $u \in \mathcal{I}_{\alpha ,[a ,b]}$ if and only if 
\begin{equation*}\frac{\partial \ln (u^{ \prime }(x))}{ \partial \ln (u(x))} =\frac{u(x)u^{ \prime  \prime }(x)}{\left (u^{ \prime }(x)\right )^{2}} \geq \frac{\alpha  -1}{\alpha } \label{Ineq: p-convex}
\end{equation*} 
for all $x \in (a ,b)$. That is, the elasticity of the marginal utility function with respect to the utility function is bounded below by $(\alpha -1)/\alpha $. The elasticity of $u'$ with respect to $u$ is a natural measure of the concavity of $u$. When the elasticity at a point $x$ is $0$, then $u$ is essentially linear around $x$. When the elasticity at a point $x$ is large, then $u$ is ``very" concave around $x$. When $\alpha$ is higher, the effect of a change in the utility function on the marginal utility function is bounded below uniformly by a higher number.

This measure of concavity has the following economic interpretation. To see this, notice that the previous inequality is equivalent to 
$$ \frac{- \frac {u''(x)}{u'(x)} } { - \frac{u'(x)}{u(x)} } = \frac{u(x)u^{ \prime  \prime }(x)}{\left (u^{ \prime }(x)\right )^{2}} \geq \frac{\alpha  -1}{\alpha }$$
for all $x\in (a,b)$. Thus, for an agent with an $\alpha,[a,b]$-concave utility function the sensitivity to risk, measured by the coefficient of risk aversion, is at least $(1-1/\alpha)$ times the sensitivity to reward, measured by the marginal utility divided by the level of utility. In other words, the parameter $(1-1/\alpha)$ bounds how much the agent 
prefers an increase in reward when it is accompanied by an increase in risk. 

Furthermore, in many of our applications (e.g., the consumption-savings problem) we impose that the agents'  marginal utility function is positive and $\alpha,[a,b]$-convex (i.e., $-u'\in\mathcal{I}_{\alpha,[a,b]}$) to derive our comparative statics results.  For such agents, their utility functions $u$ satisfy 
$$\frac{\;- \frac {u'''(x)}{u''(x)}\; }{\; -\frac{u''(x)}{u'(x)} \; }\ge  \frac{(u'(x)-u'(b))u'''(x)}{(u''(x))^2} \ge \frac {\alpha-1}{\alpha}$$
for all $x\in (a,b)$.\footnote{The first inequality holds since $u'$ is positive and convex. The second inequality comes from the characterization of $\alpha$ convexity for smooth functions.} Thus, for this class of agents we have that their coefficient of prudence is at least $(1-1/\alpha)$ times their coefficient of risk aversion. We show in the applications that this condition implies sharp comparative statics results. Also, using this inequality, we can bound the degree of convexity $\alpha$ by using estimates of the coefficients of prudence and risk aversion.

\textbf{ Examples.} The following examples show that the family of $\alpha,[a,b]$-concave  stochastic orders allows us to compare simple lotteries that are not comparable by other popular stochastic orders. 

In Example \ref{Example 1} we show that $Y \succeq _{\alpha ,[a ,b] -I}X$ for the random variables in Figure \ref{fig:1} (see Section 1). This example is simple and can be used in order to design a simple experiment to determine if the decision maker's utility function is not $\alph$-concave function for some $\alpha$. 

We provide two more examples of random variables $X$ and $Y$ where $X$ has a higher expected value and is riskier than $Y$, and $Y$ dominates $X$ in the $\alpha  ,[a ,b]$-concave stochastic order. The second example involves compound lotteries and the third example involves a uniform distribution.

\begin{example} \label{Example 1}
Consider two lotteries $X$ and $Y$. Lottery $X$ yields $a$ dollars with probability $\lambda^{\alpha}$ and $b$ dollars with probability $1 -\lambda^{\alpha }$ where $b>a$ and $\alpha  \geq 1$. Lottery $Y$ yields $\lambda a +\left (1 -\lambda \right )b$ dollars with probability $1$. Then  $Y \succeq _{\alpha ,[a ,b] -I}X$.\footnote{The proofs of this assertion and the assertions in Example 2 and 3 are presented in the appendix.}   
 
\end{example}

\begin{example} \label{Example 2}
(Compound lotteries). Consider two lotteries $Y$ and $X$. Lottery $Y$ yields $x_{i} : =\lambda _{i}a +\left (1 -\lambda _{i}\right )b$ with probability $0 <p_{i} <1$, $i =1 ,\ldots  ,n$ where $0 <\lambda _{1} <\ldots  <\lambda _{n} <1$. Lottery $X$ yields $a$ with probability $\sum _{i}p_{i}\lambda _{i}^{\alpha }$ and $b$ with probability $1 -\sum _{i}p_{i}\lambda _{i}^{\alpha }$. Then $Y \succeq _{\alpha ,[a ,b] -I}X$.  

\end{example}

\begin{example} (Uniform distribution).\label{Example 3}
Consider two lotteries $Y$ and $X$. Lottery $X$ yields $a$ dollars with probability $\frac{1}{\alpha  +1}$ and $b$ dollars with probability $\frac{\alpha }{\alpha  +1}$ where $b >a$ and $\alpha  \geq 1$. Lottery $Y$ is uniformly distributed on $[a ,b]$. Then $Y \succeq _{\alpha  ,[a ,b] -I}X$. 
\end{example}

For general distribution functions $F$ and $G$ it is not trivial to check whether $F$ dominates $G$ in the $\alpha,[a,b]$-concave  stochastic order. Below we provide a sufficient condition which is given by a simple integral inequality which guarantees that $F$ dominates $G$ in the $\alpha,[a,b]$-concave stochastic order.

\textbf{Properties of the $\alpha,[a,b]$-concave stochastic orders.} In Proposition \ref{prop:2.5} we  provide some properties of the $\alpha,[a,b]$-concave  stochastic order. The first property is intuitive and  shows that  $F \succeq_{\alph-I}G$ implies $F\succeq_{\beta,[a,b]-I}G$ whenever $\beta>\alpha$. This is immediate because for $\alpha _{2} >\alpha _{1}$ we have $\mathcal{I}_{\alpha _{2} ,[a ,b]} \subset \mathcal{I}_{\alpha _{1} ,[a ,b]}$. Importantly, $F\succeq_{1,[a,b]-I}G$, implies $F\succeq_{\alph-I} G$ for every $\alpha \geq 1$.  That is, the  $\alpha ,[a,b]$-concave stochastic order is weaker than the second order stochastic dominance for every $\alpha > 1$. The second property relates to translation invariance. We note that if $X\succeq_{\alph-I}Y$ then $X+c\succeq_{\alpha,[a,b]-I}Y+c$ is not necessarily well defined for $c \in \mathbb{R}$. This is because the $\alpha ,[a,b]$-concave stochastic orders are defined on a specific support. Hence,  translation invariance for the $\alph$-concave stochastic orders means that $X\succeq_{\alph-I}Y$ implies $X+c\succeq_{\alpha,[a+c,b+c]-I}Y+c$ for every $c \in \mathbb{R}$. This is exactly the second property. Translation invariance is an important property for a stochastic order  because it means that comparisons between random variables that are based on this stochastic order are invariant for currency conversions and for the addition of a sure amount of wealth (many important stochastic orders satisfy the translation invariance property, e.g., SOSD). 
The third property shows that $F \succeq_{\alpha,[a,b']-I} G $ implies $ F\succeq_{\alph-I} G$ whenever  $b'\ge b$. This is important in applications because we can choose a support such that all the random variables of interest are defined on this support. For example, we use Property 3 to prove our results in the consumption-savings problem discussed in the introduction (see Proposition \ref{prop:consumption-savings}).  

\begin{proposition}\label{prop:2.5} 
	The following properties hold:

 1. Let $\beta>\alpha$. Then $F \succeq_{\alph-I}G$ implies $F\succeq_{\beta,[a,b]-I}G$. 
		 
2.  Suppose that $X\succeq_{\alph-I}Y$. Then $X+c\succeq_{\alpha,[a+c,b+c]-I}Y+c$  for every $c\in \R$.
		 
3. Suppose that $F$ and $G$ are distributions on $[a,b]$. Then for every $b'\ge b$ we have 
\begin{align*}
		F \succeq_{\alpha,[a,b']-I} G &\Longrightarrow  F\succeq_{\alph-I} G. 
		 \end{align*}

	\end{proposition}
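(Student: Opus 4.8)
The plan is to prove each of the three parts by reducing it to a statement about the generating sets $\mathcal{I}_{\alpha,[a,b]}$, since all three are monotonicity/invariance properties of the orders and integral stochastic orders respect inclusions of their generators: if $\mathcal{G}_1 \subseteq \mathcal{G}_2$ then $\succeq_{\mathcal{G}_2}$ implies $\succeq_{\mathcal{G}_1}$.

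For Part 1, I would invoke the inclusion $\mathcal{I}_{\beta,[a,b]} \subseteq \mathcal{I}_{\alpha,[a,b]}$ for $\beta > \alpha \ge 1$, which the excerpt attributes to Proposition \ref{prop:2} in the appendix. Given $F \succeq_{\alpha,[a,b]-I} G$, the inequality $\int u\,dF \ge \int u\,dG$ holds for every $u \in \mathcal{I}_{\alpha,[a,b]}$, hence in particular for every $u \in \mathcal{I}_{\beta,[a,b]}$, which is exactly $F \succeq_{\beta,[a,b]-I} G$. The only thing to double-check is the direction of the inclusion: a larger concavity index $\beta$ gives a more demanding $\alpha$-convexity condition on $u(b)-u(x)$, so fewer functions qualify — consistent with $\mathcal{I}_{\beta,[a,b]} \subseteq \mathcal{I}_{\alpha,[a,b]}$.

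For Part 2, the key observation is a bijection between generators: the map $u \mapsto v$ with $v(x) := u(x - c)$ sends $\mathcal{I}_{\alpha,[a+c,b+c]}$ onto $\mathcal{I}_{\alpha,[a,b]}$. Indeed, monotonicity is preserved under translation of the argument, and $v(b) - v(x) = u(b') - u(x - c)$ with $b' = b - c$... more carefully: if $v \in \mathcal{I}_{\alpha,[a+c,b+c]}$ then $v$ is increasing and $v(b+c) - v(y)$ is $\alpha$-convex in $y$ on $[a+c,b+c]$; setting $u(x) = v(x+c)$ gives $u$ increasing on $[a,b]$ with $u(b) - u(x) = v(b+c) - v(x+c)$, an $\alpha$-convex function of $x$ since precomposition with the shift $x \mapsto x+c$ preserves convexity of the $1/\alpha$-power. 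Then for any such $v$, $\int_{a+c}^{b+c} v(y)\,d(\text{law of }X+c)(y) = \E[v(X+c)] = \E[u(X)] = \int_a^b u\,dF$, and similarly for $Y$; since $X \succeq_{\alpha,[a,b]-I} Y$ gives $\E[u(X)] \ge \E[u(Y)]$ for all $u \in \mathcal{I}_{\alpha,[a,b]}$, we get $\E[v(X+c)] \ge \E[v(Y+c)]$ for all $v \in \mathcal{I}_{\alpha,[a+c,b+c]}$, which is the claim.

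For Part 3, with $b' \ge b$ I would show that every $u \in \mathcal{I}_{\alpha,[a,b]}$ extends to (or restricts from) a member of $\mathcal{I}_{\alpha,[a,b']}$ — more precisely, that restriction to $[a,b]$ maps $\mathcal{I}_{\alpha,[a,b']}$ into $\mathcal{I}_{\alpha,[a,b]}$ is the wrong direction, so instead I want: given $u \in \mathcal{I}_{\alpha,[a,b]}$, construct $\hat u \in \mathcal{I}_{\alpha,[a,b']}$ with $\hat u|_{[a,b]} = u$ up to an additive constant. The natural candidate is $\hat u(x) = u(x)$ for $x \in [a,b]$ and $\hat u(x) = u(b) + (x-b)\cdot(\text{a suitable nonnegative slope})$ for $x \in [b,b']$, i.e., extend linearly. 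One checks $\hat u$ is increasing, and that $\hat u(b') - \hat u(x)$ is $\alpha$-convex on $[a,b']$: on $[b,b']$ it is affine hence $\alpha$-convex, on $[a,b]$ we need $(\hat u(b') - \hat u(x))^{1/\alpha}$ convex — here $\hat u(b') - \hat u(x) = (u(b) - u(x)) + (\hat u(b') - u(b))$, the sum of an $\alpha$-convex function and a nonnegative constant, and I expect this to remain $\alpha$-convex (adding a positive constant to an $\alpha$-convex nonnegative function keeps the $1/\alpha$-power convex — this uses concavity of $t \mapsto t^{1/\alpha}$ appropriately, or can be checked directly from the second-derivative inequality $w w'' \ge \frac{\alpha-1}{\alpha}(w')^2$, which is preserved when $w$ is replaced by $w + $ const since $w''$ is unchanged and $w$ only increases). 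Then for distributions $F, G$ supported on $[a,b]$, $\int_a^{b'} \hat u\,dF = \int_a^b u\,dF + \text{const}$ (the constant being $\hat u$ evaluated off $[a,b]$ never enters since $F$ puts no mass there), so $F \succeq_{\alpha,[a,b']-I} G$ yields $\int_a^b u\,dF \ge \int_a^b u\,dG$ for every $u \in \mathcal{I}_{\alpha,[a,b]}$.

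The main obstacle I anticipate is the closure-under-adding-a-constant claim used in Part 3: verifying that if $w \ge 0$ and $w^{1/\alpha}$ is convex, then $(w + c)^{1/\alpha}$ is convex for $c \ge 0$. For smooth $w$ this follows from the inequality $w w'' \ge \frac{\alpha-1}{\alpha}(w')^2$ since replacing $w$ by $w+c$ only increases the left side (as $w'' \ge 0$ wherever the original inequality is tight in a meaningful way — actually one must be slightly careful if $w'' < 0$ somewhere, but $\alpha$-convexity with $w \ge 0$ forces $w'' \ge \frac{\alpha-1}{\alpha} (w')^2 / w \ge 0$, so $w$ is convex and the argument goes through). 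For non-smooth $w$ one approximates or argues via the definition of convexity directly. The translation and rescaling bookkeeping in Parts 1 and 2 is routine by comparison; the one subtlety there is making sure the normalization $u(b)=0$ used in the motivational discussion is not secretly relied upon, but since $\mathcal{I}_{\alpha,[a,b]}$ as defined contains all vertical translates, this causes no trouble.
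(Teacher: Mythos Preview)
Parts 1 and 2 are correct and match the paper's argument (which works through the dual sets $\mathcal{D}_{\alpha,[a,b]}=-\mathcal{I}_{\alpha,[a,b]}$ but is otherwise identical): the nesting of generators for Part~1, and the translation bijection between generators for Part~2.

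Part 3 contains a genuine error. You propose to extend $u\in\mathcal{I}_{\alpha,[a,b]}$ linearly on $[b,b']$ with ``a suitable nonnegative slope'' and then assert that on $[b,b']$ the function $\hat u(b')-\hat u(x)$ is ``affine hence $\alpha$-convex.'' This is false for $\alpha>1$: if the slope is $m>0$ then on $[b,b']$ one has $\hat u(b')-\hat u(x)=m(b'-x)$, and $(m(b'-x))^{1/\alpha}$ has second derivative proportional to $\tfrac{1}{\alpha}(\tfrac{1}{\alpha}-1)(b'-x)^{1/\alpha-2}<0$, so it is strictly concave. Nonconstant affine functions are \emph{not} $\alpha$-convex for $\alpha>1$ (this is precisely Proposition~\ref{prop:2}, part~5). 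Consequently the entire ``adding a nonnegative constant preserves $\alpha$-convexity'' discussion, while correct in itself, is aimed at repairing a construction that already fails on $[b,b']$.

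The paper's fix is to take slope zero, i.e.\ the constant extension $\hat u(x)=u(b)$ for $x\in[b,b']$. Then $\hat u(b')=u(b)$, so on $[a,b]$ one gets $\hat u(b')-\hat u(x)=u(b)-u(x)$ exactly (no constant is added), and on $[b,b']$ the function is identically zero. What remains---and what your proposal does not address even for the constant extension---is verifying $\alpha$-convexity of $\hat u(b')-\hat u(\cdot)$ \emph{across} the junction at $b$; checking each piece separately is not enough. The paper handles this with a short three-case analysis (both points in $[a,b]$; both in $[b,b']$; one in each), using monotonicity of $\hat u$ for the mixed case.
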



\textbf{ A sufficient condition for domination in the $\alpha,[a,b]$-concave stochastic order.}  \label{sec:2.3} 
Even though the set of $\alpha,[a,b]$-concave functions has a clear economic motivation as we explained above, the geometry of this set is complicated. Therefore,  a simple characterization is unlikely to exist. For this reason we now introduce a simple integral condition to check whether a distribution $F$ dominates a distribution $G$ in the $\alpha,[a,b]$-concave stochastic order.\footnote{We note that we cannot use similar numerical methods to the ones developed in \cite{post2013general} and \cite{fang2017higher} to characterize the $\alpha,[a,b]$-concave stochastic order. The reason is that the methods in \cite{post2013general} and \cite{fang2017higher} are developed for stochastic orders  generated by functions that are defined by inequalities  that are linear with respect to the functions' derivatives. In contrast, the $\alpha,[a,b]$-concave functions cannot be defined by inequalities  that are linear with respect to derivatives.} 
This integral condition generates a new stochastic order $\succeq_{n,[a,b]-S}$ for $n\in \N$, which we call the $n,[a,b]$-sufficient stochastic order and is of independent interest.

\begin{definition}
	Consider two distributions $F$ and $G$ over $[a,b]$ and a positive integer $n$. We say that $F$ dominates $G$ in the $n,[a,b]$-sufficient stochastic order, and write $F \succeq_{n,[a,b]-S} G$ for $n\in \N$, if and only if for all $c =(c_{1} , \ldots ,c_{n})\in [a,b]^n$ we have $$
	\int_a^b \prod_{i=1}^n \max\{c_i-x,0\} dF(x) \leq \int_a^b \prod_{i=1}^n \max\{c_i-x,0\} dG(x).$$
\end{definition}

Note that $F$ dominates $G$ in the $1,[a,b]$-sufficient stochastic order if and only if $F$ dominates $G$ in the second order stochastic dominance, i.e., $\int_{a}^{b}u(x)dF(x) \geq  \int_{a}^{b}u(x)dG(x)$ for every concave and increasing function $u$ (see Theorem 1.5.7. in \cite{muller2002comparison}). In Proposition \ref{prop:3} we extend this result. We show that if $F$ dominates $G$ in the $n,[a,b]$-sufficient stochastic order, then $F$ dominates $G$ in the $n,[a,b]$-concave stochastic order for all $n \in \mathbb{N}$. Combining this with Proposition \ref{prop:2.5} part (i) we conclude that  if $F$ dominates $G$ in the $n,[a,b]$-sufficient stochastic order, then $F$ dominates $G$ in the $\alpha,[a,b]$-concave stochastic order for all $1 \leq \alpha \leq n$. 

Thus, Proposition \ref{prop:3} provides a simple integral condition that guarantees domination in the $\alph$-concave stochastic orders.

\begin{proposition}\label{prop:3}
Consider two distributions $F$ and $G$ over $[a,b]$ and $n= \lceil \alpha \rceil$. Then $F \succeq _{n,[a,b]-S}G$ implies $F \succeq_{\alpha,[a,b]-I} G$.
\end{proposition}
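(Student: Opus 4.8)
The plan is to show that every $u \in \mathcal{I}_{\alpha,[a,b]}$ can be written (or approximated) as a ``mixture'' of the elementary functions $x \mapsto -\prod_{i=1}^n \max\{c_i - x, 0\}$ that appear in the definition of $\succeq_{n,[a,b]-S}$, plus an affine part, with $n = \lceil \alpha \rceil$. Once such a representation is in hand, the inequality $\int u\, dF \ge \int u\, dG$ follows by integrating the defining inequalities of the $n,[a,b]$-sufficient order and using that affine functions integrate equally against two distributions only if they have the same mean --- so I will actually want to first reduce to the case $\E_F[X] \ge \E_G[X]$, which itself follows from the $n,[a,b]$-sufficient condition by taking $c_1 = \dots = c_n = b$ (this gives $\int (b-x)^n dF \le \int (b-x)^n dG$; for $n=1$ this is exactly the mean comparison, and for general $n$ one still needs to extract $\E_F[X] \ge \E_G[X]$, perhaps by a separate limiting/normalization argument or by noting increasing linear functions lie in $\mathcal{I}_{\alpha,[a,b]}$ so both $\ge$ and its consequences are available).

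The technical core is an integral-representation lemma for $\alpha$-convex functions. Fix $u \in \mathcal{I}_{\alpha,[a,b]}$ and set $v = u(b) - u(\cdot) \ge 0$, which is decreasing, nonnegative, vanishes at $b$, and has $v^{1/\alpha}$ convex. I want to express $v$, up to an increasing affine correction, as $\int_{[a,b]^n} \prod_{i=1}^n \max\{c_i - x, 0\}\, d\mu(c)$ for some nonnegative measure $\mu$ on $[a,b]^n$ (symmetric, supported on the diagonal-ish region, etc.), or at least to exhibit $v$ as a pointwise limit of nonnegative combinations of such products so that $\int v\, dF \le \int v\, dG$ passes to the limit. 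The natural route: since $w := v^{1/\alpha}$ is convex, decreasing, and nonnegative on $[a,b]$, it is well known that $w(x) = w(b) + \int_{(x,b]} (\text{something})$ or, more usefully, $w$ admits a representation as an integral of functions $\max\{c-x,0\}$ against a nonnegative measure (its second-derivative measure), plus a linear tail. Then $v = w^\alpha$; expanding an $n$-fold-product-type bound for $w^\alpha$ in terms of $n$-fold products of the $\max\{c-x,0\}$ building blocks --- here using $\alpha \le n$ so that $w^\alpha \le (\text{const}) \cdot w^n$-type domination on the bounded interval, or more carefully interpolating --- should produce the required $n$-linear structure. I expect the cleanest version is: prove it first for $\alpha = n$ an integer (where $v^{1/n}$ convex and one genuinely gets an $n$-fold product representation by taking the $n$-th power of the $1$-fold representation of $w$ and multiplying out), and then deduce general $\alpha \le n$ from Proposition~\ref{prop:2.5}(1), since $\mathcal{I}_{n,[a,b]} \subset \mathcal{I}_{\alpha,[a,b]}$ reverses the needed inclusion --- wait, that goes the wrong way, so instead I deduce it from the already-stated chain in the text: $F \succeq_{n,[a,b]-S} G \Rightarrow F \succeq_{n,[a,b]-I} G \Rightarrow F \succeq_{\alpha,[a,b]-I} G$ for $\alpha \le n$, meaning the real content to prove is exactly the case $\alpha = n$ integer.

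So the proof reduces to: \emph{if $F \succeq_{n,[a,b]-S} G$ then $F \succeq_{n,[a,b]-I} G$, for integer $n$}. For this, take $u \in \mathcal{I}_{n,[a,b]}$, smooth it by convolution and approximate so WLOG $u$ is smooth with $v = u(b)-u$ satisfying $v \ge 0$, $v' \le 0$, and $v^{1/n}$ convex; write $w = v^{1/n}$, so $w$ is convex decreasing nonnegative with $w(b) = 0$, hence $w(x) = \int_a^b \max\{c-x,0\}\, d\nu(c) + \beta\max\{b-x,0\}$-type formula is not quite it --- rather $w(x) = \int_x^b w'(t)\,dt$ and $-w' \ge 0$ decreasing... the clean statement is $w(x) = \int_{[a,b]} \max\{c - x, 0\}\, \mu(dc)$ where $\mu = w''$ (a nonnegative measure) once we also use $w(b) = w'(b^-) \le 0$ handled via a boundary atom; then $v = w^n = \int \cdots \int \prod_{i=1}^n \max\{c_i - x, 0\}\, \mu(dc_1)\cdots\mu(dc_n)$, a nonnegative measure on $[a,b]^n$. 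Integrating the $S$-inequalities against $\mu^{\otimes n}$ and applying Fubini gives $\int v\, dF \le \int v\, dG$, i.e. $\int u\, dF \ge \int u\, dG$ up to the constant $u(b)$ which cancels. The main obstacle is making the boundary behavior of $w$ rigorous (what if $w'(b^-) < 0$, so $w$ is not the integral of a positive measure alone but needs the linear piece $\max\{b-x,0\}$, whose $n$-th power is $\max\{b-x,0\}^n$ --- still an allowed product with $c_1=\dots=c_n=b$) together with the smoothing/approximation step ensuring both that generic $u \in \mathcal{I}_{n,[a,b]}$ are limits of smooth ones in $\mathcal{I}_{n,[a,b]}$ and that the limit is compatible with the integral inequalities (dominated convergence on the compact interval, using boundedness from $\mathcal{B}_{[a,b]}$). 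Everything else is bookkeeping with Fubini and the elementary fact that $\max\{c-x,0\}$ building blocks generate convex decreasing functions.
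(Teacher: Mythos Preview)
Your proposal is correct and follows essentially the same route as the paper: reduce to integer $n$ via Proposition~\ref{prop:2.5}(1), write $w=(u(b)-u)^{1/n}$ as a nonnegative superposition of the functions $\max\{c-\cdot,0\}$ (the paper does this by uniform approximation by finite sums and then the multinomial theorem, you by an integral representation against $w''$ and Fubini---these are the same idea in slightly different bookkeeping), and pass to the limit. Your first paragraph's detour through a mean comparison is unnecessary and slightly confused---non-constant affine functions are not in $\mathcal{I}_{\alpha,[a,b]}$ for $\alpha>1$, so no separate treatment of an affine part is needed---but you correctly abandon it and land on the right argument.
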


For $n>1$ the converse of Proposition \ref{prop:3} does not hold. That is, $F \succeq_{n,[a,b]-I} G $ does not imply $F\succeq_{n,[a,b]-S} G$. For example, for $n=2$ it can be checked that the function $- \max\{c_{1}-x,0\}\max\{c_{2}-x,0\}$ is not a $2,[a,b]$-concave  function for $c_{2} \neq c_{1}$. Because the maximal generator of the $2,[a,b]$-concave  stochastic order is the set of $2,[a,b]$-concave functions (see Section \ref{section:maximal} in the appendix) we conclude that $F\succeq_{2,[a,b]-I} G$ does not imply $F \succeq_{2,[a,b]-S} G$. On the other hand, in Section \ref{section:maximal} in the appendix we formally prove a partial characterization of the $2,[a,b]$-sufficient stochastic. We show that this stochastic order generates an appealing set of functions (in particular, it is not equivalent to SOSD as our examples show).

The $n,[a,b]$-sufficient stochastic order is particularly useful for the case $n=2$. We now provide a sufficient condition that ensures that $F$ dominates $G$ in the $2,[a,b]$-concave  stochastic order by applying Proposition~\ref{prop:3}. Similar conditions are used to determine if $F$ dominates $G$ in other popular stochastic orders such as the second order stochastic dominance and the third order stochastic dominance.

\begin{proposition}\label{prop:4}
	Consider two distributions $F$ and $G$ over $[a,b]$.
	We have that $F\succeq_{2,[a,b]-S} G$ if and only if for all $c\in [a,b]$ the following two inequalities hold:
	\begin{align}
	&	(b-c)\bigg[\int_a^{c} F(x) dx -\int_a^{c} G(x) dx  \bigg] + 2\int_a^{c} \bigg(\int_a^x F(z)dz-\int_a^x G(z)dz\bigg)dx  \leq 0 \label{ine:1} \\
	&	\int_a^{c} \bigg(\int_a^x F(z)dz-\int_a^x G(z)dz\bigg)dx  \leq 0 \label{ine:2}\;.
	\end{align}
\end{proposition}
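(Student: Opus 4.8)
The plan is to unwind the definition of $\succeq_{2,[a,b]-S}$ by a single integration by parts and then exploit the fact that the resulting expression is affine in one of the two free parameters.

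First, since the kernel $\phi_{c_1,c_2}(x):=\max\{c_1-x,0\}\max\{c_2-x,0\}$ is symmetric in $(c_1,c_2)$, I would assume $c_1\le c_2$ without loss of generality. For such a pair $\phi_{c_1,c_2}(x)=(c_1-x)(c_2-x)$ on $[a,c_1]$ and $\phi_{c_1,c_2}(x)=0$ on $[c_1,b]$, so $\phi_{c_1,c_2}$ is Lipschitz on $[a,b]$ with a.e.\ derivative $\phi_{c_1,c_2}'(x)=-(c_1+c_2-2x)$ on $(a,c_1)$ and $0$ on $(c_1,b)$. Writing $\Delta:=F-G$, which has bounded variation with $\Delta(a^-)=0$ and $\Delta(b)=0$, Lebesgue--Stieltjes integration by parts gives
$$\int_a^b \phi_{c_1,c_2}(x)\,d(F-G)(x)=-\int_a^b \Delta(x)\,\phi_{c_1,c_2}'(x)\,dx=\int_a^{c_1}\Delta(x)\,(c_1+c_2-2x)\,dx.$$

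Next I would write $c_1+c_2-2x=(c_2-c_1)+2(c_1-x)$ and apply Fubini, $\int_a^{c_1}(c_1-x)\Delta(x)\,dx=\int_a^{c_1}\!\int_a^t\Delta(x)\,dx\,dt$, to obtain
$$\int_a^b \phi_{c_1,c_2}(x)\,d(F-G)(x)=(c_2-c_1)\,\Delta_1(c_1)+2\,\Delta_2(c_1),$$
where $\Delta_1(c):=\int_a^c(F(z)-G(z))\,dz$ and $\Delta_2(c):=\int_a^c\Delta_1(x)\,dx=\int_a^c\!\int_a^x(F(z)-G(z))\,dz\,dx$. For each fixed $c_1$ the right-hand side is an affine function of $c_2$ on the interval $[c_1,b]$, hence it is $\le 0$ for every $c_2\in[c_1,b]$ if and only if it is $\le 0$ at the two endpoints $c_2=c_1$ and $c_2=b$. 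The endpoint $c_2=c_1$ yields $2\Delta_2(c_1)\le 0$, which is exactly \eqref{ine:2} with $c=c_1$; the endpoint $c_2=b$ yields $(b-c_1)\Delta_1(c_1)+2\Delta_2(c_1)\le 0$, which upon substituting the definitions of $\Delta_1$ and $\Delta_2$ is exactly \eqref{ine:1} with $c=c_1$. Letting $(c_1,c_2)$ range over $[a,b]^2$ (equivalently, by symmetry, over $\{c_1\le c_2\}$), one concludes that $F\succeq_{2,[a,b]-S}G$ holds if and only if \eqref{ine:1} and \eqref{ine:2} hold for every $c\in[a,b]$.

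The only mildly delicate point is the integration-by-parts step: one must check that $\phi_{c_1,c_2}$, despite its kink at $c_1$, is absolutely continuous so that $d\phi_{c_1,c_2}=\phi_{c_1,c_2}'(x)\,dx$ carries no singular part, and that the boundary terms vanish, which is precisely where $F(a^-)=G(a^-)=0$ and $F(b)=G(b)=1$ enter. Everything after that is the affine-in-$c_2$ observation together with routine bookkeeping, so I do not anticipate a genuine obstacle.
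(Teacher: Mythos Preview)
Your proposal is correct and follows essentially the same route as the paper: the paper establishes the identity $\int_a^b \max\{c_1-x,0\}\max\{c_2-x,0\}\,dF(x)=(c_2-c_1)\int_a^{c_1}F(x)\,dx+2\int_a^{c_1}\!\int_a^x F(z)\,dz\,dx$ via integration by parts (stated separately as a lemma), takes the difference between $F$ and $G$, and then uses the same affine-in-$c_2$ observation with endpoints $c_2=c_1$ and $c_2=b$. Your version is slightly more economical in that you integrate by parts directly against the signed measure $d(F-G)$ and use a single Fubini step rather than two successive integrations by parts, but the key formula and the linearity argument are identical.
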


Interestingly, the conditions of Proposition~\ref{prop:4} inherently relate to SOSD and third order stochastic dominance. Third order stochastic dominance corresponds to inequality (\ref{ine:2}) and to the condition $\E_G[X]\leq  \E_F[X]$. SOSD corresponds to $\int_a^{c} F(x) dx -\int_a^{c} G(x) dx \leq 0 $ for all $c$ (which also implies a ranking over expectations and inequality (\ref{ine:2})). In contrast, the $2,[a,b]$-sufficient stochastic order does not imply a ranking over expectations, but the left-hand-side of inequality (\ref{ine:1}) is bounded from above by a number smaller than $0$ for any value of $c$  that implies a violation of SOSD, i.e., any $c$ that satisfies $\int_a^{c} F(x) dx -\int_a^{c} G(x) dx > 0 $. Thus, the $2$-sufficient stochastic order does not imply the second or the third  stochastic order.

In some cases, the $2$-sufficient stochastic order provides a necessary and sufficient integral condition to conclude that $F\succeq_{2,[a,b]-I} G$. If  condition~\eqref{ine:2} implies  condition~\eqref{ine:1} then condition~\eqref{ine:2} holds if and only if $F\succeq_{2,[a,b]-I} G$. To see this, note that for $c \in [a,b]$ the function $- \max\{c-x,0\}^{2}$ is a $2,[a,b]$-concave function and that 
\begin{equation*}
    \int_a^b \max\{c-x,0\}^{2} dF(x)=2\int_a^{c} \bigg(\int_a^x F(z)dz\bigg)dx 
\end{equation*}
(see Lemma \ref{lemma:0} in the appendix). Thus, if $F\succeq_{2,[a,b]-I} G$ holds, then condition~\eqref{ine:2} holds. On the other hand, if condition~\eqref{ine:2} implies condition~\eqref{ine:1}, then from Proposition \ref{prop:4} we have $F\succeq_{2,[a,b]-I} G$. We summarize this result in the following Corollary. 

\begin{corollary}\label{coro:1}
Let $F$ and $G$ be two distributions over $[a,b]$. Suppose that if condition~\eqref{ine:2} holds then condition~\eqref{ine:1} also holds.  Then condition~\eqref{ine:2} holds if and only if $F\succeq_{2,[a,b]-I} G$.
\end{corollary}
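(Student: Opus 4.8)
The plan is to prove the two implications of the biconditional separately, using Propositions \ref{prop:3} and \ref{prop:4} together with the integral identity for $\int_a^b \max\{c-x,0\}^2\,dF(x)$ recorded just above the statement (Lemma \ref{lemma:0}). The ``if'' direction will hold unconditionally, while the ``only if'' direction is where the extra hypothesis ``\eqref{ine:2} $\Rightarrow$ \eqref{ine:1}'' is used.

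For the ``if'' direction, suppose $F\succeq_{2,[a,b]-I}G$. First I would check that for each fixed $c\in[a,b]$ the function $u_c(x):=-\max\{c-x,0\}^2$ belongs to $\mathcal{I}_{2,[a,b]}$. It is increasing because $x\mapsto \max\{c-x,0\}^2$ is decreasing on $[a,b]$; and since $c\le b$ gives $u_c(b)=0$, we have $u_c(b)-u_c(x)=\max\{c-x,0\}^2$, whose square root $\max\{c-x,0\}$ is convex, so $u_c(b)-u_c(x)$ is $2$-convex. Applying the defining inequality of $\succeq_{2,[a,b]-I}$ to $u_c$ yields $\int_a^b \max\{c-x,0\}^2\,dF(x)\le \int_a^b \max\{c-x,0\}^2\,dG(x)$, and rewriting each side with the identity $\int_a^b\max\{c-x,0\}^2\,dH(x)=2\int_a^c\big(\int_a^x H(z)\,dz\big)\,dx$ gives exactly condition \eqref{ine:2} for that $c$. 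Since $c\in[a,b]$ was arbitrary, \eqref{ine:2} holds.

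For the ``only if'' direction, suppose condition \eqref{ine:2} holds. By the standing hypothesis, condition \eqref{ine:1} then holds as well, so Proposition \ref{prop:4} gives $F\succeq_{2,[a,b]-S}G$. Since $\lceil 2\rceil=2$, Proposition \ref{prop:3} (applied with $\alpha=2$, $n=2$) then yields $F\succeq_{2,[a,b]-I}G$, which completes the argument.

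The proof is short because the substantive work is already in place: Proposition \ref{prop:4} converts \eqref{ine:1}--\eqref{ine:2} into $\succeq_{2,[a,b]-S}$, Proposition \ref{prop:3} passes from $\succeq_{2,[a,b]-S}$ to $\succeq_{2,[a,b]-I}$, and Lemma \ref{lemma:0} supplies the integral identity. The only genuinely delicate points are (i) verifying $u_c\in\mathcal{I}_{2,[a,b]}$ — in particular that $u_c(b)=0$ so that the $\alpha$-convexity is about $\max\{c-x,0\}^2$ rather than a shifted version — and (ii) invoking the hypothesis in the correct direction, i.e.\ only in the ``only if'' half, since the ``if'' half is valid for arbitrary $F,G$. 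I do not expect any further obstacle.
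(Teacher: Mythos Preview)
Your proposal is correct and follows essentially the same approach as the paper's inline argument preceding the Corollary: the ``if'' direction uses that $-\max\{c-x,0\}^2\in\mathcal{I}_{2,[a,b]}$ together with the Lemma~\ref{lemma:0} identity, and the ``only if'' direction combines the hypothesis with Proposition~\ref{prop:4} and Proposition~\ref{prop:3}. If anything, you are more careful than the paper, which writes that Proposition~\ref{prop:4} directly yields $F\succeq_{2,[a,b]-I}G$ when it really gives $F\succeq_{2,[a,b]-S}G$; your explicit appeal to Proposition~\ref{prop:3} fills that small gap.
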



Corollary \ref{coro:1} provides a tool to show that a random variable dominates another random variable in the $2,[a,b]$-concave stochastic order. We will provide  applications of Corollary \ref{coro:1} in Section \ref{sec:3}).

\section{Applications}\label{sec:3}

In this section, we discuss four applications in which we use the  $\alpha,[a,b]$-concave  stochastic orders: a consumption-savings problem with an uncertain future income, self-protection problems, a Diamond-type search model with one-sided incomplete information, and comparing uniform distributions. 

\subsection{Precautionary saving when the future labor income is riskier and has a higher expected value} \label{sec:Precu}

Consider the consumption-savings problem described in Section \ref{sec: Motivating}. Recall that
 $$g (F) =\ensuremath{\operatorname*{argmax}}_{s \in C(x)}h(s ,F)$$ is the optimal savings under the distribution $F$ where we denote by $C(x) : =[0,x]$  the interval from which the agent may choose his level of savings when his wealth is $x$ (see Section \ref{sec: Motivating}).

In the following Proposition we show that when the agent's marginal utility is a $2,[0 ,Rx +\overline{y}]$-convex function, then the precautionary saving motive is stronger than the permanent income motive, i.e., $F \succeq _{2 ,[0 ,Rx +\overline{y}] -I}G$ implies $g(G) \geq g(F)$. That is, when $F$ is better and riskier than $G$ in terms of the $2,[0,Rx +\overline{y}]$-concave stochastic order, then  savings under $G$ are higher than under $F$. 

Proposition \ref{prop:consumption-savings} uncovers the potential importance of prudence and future income uncertainty as first order considerations in policy design. If the agents are ``very" prudent, then an increase in an agent's permanent income together with an increase in future income uncertainty reduces consumption. Hence, in an economy where agents have ``very" convex marginal utilities, i.e., agents are ``very" prudent, reducing the agents' future income uncertainty can be the major focus of a policy maker who aims to increase the short-run consumption. A policy that increases permanent income can lead to a decrease in the short-run  consumption when the future labor income uncertainty increases (which is a typical feature of a recession).

The condition that $u'$ is a $2,[0 ,Rx +\overline{y}]$-convex function is not satisfied by the important class of constant relative risk aversion functions. However, a closely related class of utility functions satisfies this condition. It can be shown that $u'$ is a $2,[a,b]$-convex  function for the utility function $$u(x)=\frac{x^{1-\gamma}}{1-\gamma} + \frac{\gamma x^{2}}{2b^{\gamma+1}}$$
for $\gamma > 0$,  $\gamma \neq 1$ and $u(x)= \log(x) +x^{2}/2b^{2}$ for $\gamma = 1$. Note that for a large $b$ the utility function defined above is close to a constant relative risk aversion utility function.

\begin{proposition}\label{prop:consumption-savings}
 Suppose that $u^{ \prime }$ is a $2 ,[0 ,Rx +\overline{y}]$-convex function.
 
(i) If $F  \succeq _{2 ,[0 ,Rx +\overline{y}] -S}G$ then $g(G) \geq g(F)$, i.e., the savings under $G$ are greater than or equal to the savings under $F$.   

(ii) If $F  \succeq _{2 ,[0 ,Rx +\overline{y}] -I}G$ then $g(G) \geq g(F)$, i.e., the savings under $G$ are greater than or equal to the savings under $F$.  
\end{proposition}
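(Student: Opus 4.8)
The plan is to reduce the comparative-statics claim to a pointwise comparison of the agent's marginal value of saving, and then to read that comparison off the $2,[0,Rx+\overline{y}]$-concave order by moving the income distributions rather than the test function. Since $u$ is strictly concave and $C^1$, the objective $h(\cdot,F)$ is strictly concave and $C^1$ on $C(x)=[0,x]$, so $g(F)$ and $g(G)$ are well-defined singletons and are pinned down by the (one-sided) first-order conditions for $\partial_s h(s,F)=-u'(x-s)+R\int_0^{\overline{y}}u'(Rs+y)\,dF(y)$. I claim it is enough to prove
\[
\int_0^{\overline{y}}u'(Rs+y)\,dG(y)\ \ge\ \int_0^{\overline{y}}u'(Rs+y)\,dF(y)\qquad\text{for every }s\in[0,x].\qquad(\star)
\]
Granting $(\star)$, one gets $\partial_s h(s,G)-\partial_s h(s,F)=R\big(\int u'(Rs+y)\,dG(y)-\int u'(Rs+y)\,dF(y)\big)\ge 0$ for all $s\in[0,x]$, and a single-crossing argument closes the proof: if $g(G)<g(F)$ then $s_0:=g(G)\in[0,x)$, so $\partial_s h(s_0,G)\le 0$, while strict concavity of $h(\cdot,F)$ together with $s_0<g(F)$ forces $\partial_s h(s_0,F)>0$, contradicting $(\star)$ at $s_0$.

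For part (ii), I would establish $(\star)$ as follows. Fix $s\in[0,x]$ and let $F_s,G_s$ be the laws of $Rs+Y$ for $Y\sim F,G$; these are supported on $[Rs,Rs+\overline{y}]\subseteq[Rs,Rx+\overline{y}]$, where the inclusion uses $s\le x$. From the hypothesis $F\succeq_{2,[0,Rx+\overline{y}]-I}G$, translation invariance (Proposition~\ref{prop:2.5}, part 2, with $c=Rs$) gives $F_s\succeq_{2,[Rs,Rx+\overline{y}+Rs]-I}G_s$, and then Proposition~\ref{prop:2.5}, part 3 — applicable since $F_s,G_s$ are distributions on $[Rs,Rx+\overline{y}]$ and $Rx+\overline{y}+Rs\ge Rx+\overline{y}$ — yields $F_s\succeq_{2,[Rs,Rx+\overline{y}]-I}G_s$. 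Next I would check that the restriction of $-u'$ to $[Rs,Rx+\overline{y}]$ lies in $\mathcal{I}_{2,[Rs,Rx+\overline{y}]}$: it is increasing because $u$ is concave, and $(-u')(Rx+\overline{y})-(-u')(\cdot)=u'(\cdot)-u'(Rx+\overline{y})$ is $2$-convex on $[Rs,Rx+\overline{y}]$ because $\sqrt{u'(\cdot)-u'(Rx+\overline{y})}$ is convex on the larger interval $[0,Rx+\overline{y}]$ by the assumption that $u'$ is $2,[0,Rx+\overline{y}]$-convex, and convexity is inherited by the sub-interval $[Rs,Rx+\overline{y}]$ — which, importantly, shares the right endpoint $Rx+\overline{y}$, so the reference point in the definition of $2$-convexity is not disturbed. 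Applying the relation $F_s\succeq_{2,[Rs,Rx+\overline{y}]-I}G_s$ to this test function gives $\int(-u')\,dF_s\ge\int(-u')\,dG_s$, i.e. $\E_F[-u'(Rs+Y)]\ge\E_G[-u'(Rs+Y)]$, which is exactly $(\star)$.

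Part (i) then follows with no extra work: since $\lceil 2\rceil=2$, Proposition~\ref{prop:3} shows that $F\succeq_{2,[0,Rx+\overline{y}]-S}G$ implies $F\succeq_{2,[0,Rx+\overline{y}]-I}G$, so (ii) applies. The step I expect to be the main obstacle — and the one to be careful about — is the reduction in the second paragraph: the function $y\mapsto -u'(Rs+y)$ need not itself belong to $\mathcal{I}_{2,[0,\overline{y}]}$ (already when $s>0$ the square root of $u'(Rs+y)-u'(Rs+\overline{y})$ can fail to be convex in $y$), so one cannot simply shrink the support to $[0,\overline{y}]$ and test $-u'$ directly; instead one must translate the distributions and keep the right endpoint of the support interval pinned at $Rx+\overline{y}$, which is exactly what Proposition~\ref{prop:2.5}, parts 2 and 3, permit and what keeps the $2$-convexity of $u'$ usable as stated. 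Minor additional care is needed only for the corner $g(G)=0$ in the single-crossing step, where the first-order condition becomes a one-sided inequality.
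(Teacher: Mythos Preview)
Your proof is correct and essentially matches the paper's argument: the paper keeps $F,G$ fixed on $[0,\overline{y}]$ and shows that $g_s(y):=u'(Rs+y)$ is $2,[0,Rx-Rs+\overline{y}]$-convex (shrinking the right endpoint via Lemma~\ref{Lemma: suff prop} for the $S$-order and Proposition~\ref{prop:2.5}(3) for the $I$-order), whereas you translate the distributions to $F_s,G_s$ and test $-u'$ on $[Rs,Rx+\overline{y}]$ --- these are the same computation under the change of variables $z=Rs+y$, and in both cases the crucial point is that the right endpoint stays pinned at $Rx+\overline{y}$ so that the $2$-convexity hypothesis on $u'$ applies verbatim. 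The remaining differences are organizational: the paper proves (i) first and invokes Topkis (1978) for the monotone comparative statics, while you prove (ii) first, deduce (i) from Proposition~\ref{prop:3}, and replace Topkis by a direct single-crossing argument.
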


For a thrice differentiable utility function,  the condition that $u'$ is a $2,[0,b]$-convex decreasing function is equivalent to the condition that $(u'(x)-u'(b))u'''(x)/(u''(x))^{2} \geq 0.5$ for all $x \in [a,b]$.  The last expression means that the ratio between the coefficient of relative prudence and the coefficient of relative risk aversion is bounded below by $1/2$ (see the discussion in Section \ref{sec:2}). In this case, the precautionary effect is stronger than the permanent income effect. 

We stated the result in Proposition  \ref{prop:consumption-savings} also with respect to the sufficient stochastic order (see part (i)). This can be useful in applications because it easy to check whether $F$ dominates $G$ in the sufficient stochastic order.

\subsection{Self-protection problems} \label{sec:self-prot}
Self-protection is a costly action that reduces the probability for a loss (see \cite{ehrlich1972market}). Since the work of \cite{ehrlich1972market}, self-protection problems are widely studied in the literature on decision making under uncertainty.\footnote{For example, see \cite{dionne1985self}, \cite{eeckhoudt2005impact}, \cite{meyer2011diamond}, \cite{denuit2016tradeoffs}, and \cite{liu2017increasing}.} Should a decision maker choose more or less self-protection? One way to answer this question is based on stochastic orders. A risk-averse decision maker can decide to prefer less self-protection if most risk-averse decision makers prefer less self-protection. In this section we study a standard self-protection problem and provide a decision rule to answer the question above based on the $2,[a,b]$-concave stochastic order. We find conditions that imply that an agent prefers to decrease the level of self-protection even when the increase in self-protection is profitable in expectation.   

We study a simple self-protection problem (as in \cite{ehrlich1972market} and \cite{eeckhoudt2005impact}) where there are two possible outcomes: a loss of fixed size or no loss at all. We now provide the formal details. 

There are two lotteries $X$ and $Y$. Lottery $X$ yields $w-L-e_{x}$ with probability $p$ and $w-e_{x}$ with probability $1-p$. Lottery $Y$ yields $w-L-e_{y}$ with probability $q$ and $w-e_{y}$ with probability $1-q$. The wealth that the decision maker has is given by  $w$, the fixed loss is given by $L$, and $p$ and $q$ are the probabilities of loss that depend on the level of expenditure on self-protection $e_{i}$ for $i=x,y$. We assume that $e_{x}>e_{y}$ and $q>p$. That is, if the decision maker chooses a higher expenditure on self-protection, then the probability of a loss decreases. We also assume that $w-e_{x} > w-L-e_{y}$. If the last inequality does not hold, every rational decision maker would clearly prefer $Y$ to $X$. The following Proposition follows immediately from Lemma \ref{Lemma:two-point} and part (i) of Proposition \ref{prop:2.5}.

\begin{proposition} \label{Prop:Self-protection}
    Suppose that the expected value of $X$ is higher than the expected value of $Y$, i.e., $-pL-e_{x} \geq -qL -e_{y}$. 
    Then
    \begin{equation}\label{Ineq:self-protection}
        p(e_{x}-e_{y}+L)^{2}+(1-p)(e_{x}-e_{y})^{2} \geq qL^{2} 
        \end{equation}
        if and only if $Y\succeq_{2,[w-L-e_{x},w-e_{y}]-I} X$, i.e., $Y$ dominates $X$ in the $2,[w-L-e_{x},w-e_{y}]$-concave  stochastic order.  
    
\end{proposition}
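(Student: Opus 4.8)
The plan is to recast $Y\succeq_{2,[a,b]-I}X$ as an inequality that must hold for a family of convex functions, and then to reduce it to a two–variable optimization problem. Write $a:=w-L-e_{x}$ and $b:=w-e_{y}$ for the endpoints of the common support, put $m_{1}:=w-L-e_{y}$ (the ``low'' outcome of $Y$) and $m_{3}:=w-e_{x}$ (the ``high'' outcome of $X$), and set $\delta:=e_{x}-e_{y}>0$; the hypotheses $q>p$, $e_{x}>e_{y}$ and $w-e_{x}>w-L-e_{y}$ give the ordering $a<m_{1}<m_{3}<b$, and one has the identities $m_{1}-a=b-m_{3}=\delta$, $m_{3}-a=b-m_{1}=L$, $b-a=L+\delta$, $m_{3}-m_{1}=L-\delta$. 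The first step is a change of variables. Since the distributions of $X$ and $Y$ are probability measures, replacing $u$ by $u-u(b)$ leaves $\E[u(Y)]-\E[u(X)]$ unchanged, so it suffices to verify $\E[u(Y)]\ge\E[u(X)]$ for those $u\in\mathcal{I}_{2,[a,b]}$ with $u(b)=0$; for such $u$ one has $u\le 0$ on $[a,b]$, and $v(x):=(u(b)-u(x))^{1/2}$ is convex (this is precisely the $2$-convexity of $u(b)-u$), nonnegative, decreasing, with $v(b)=0$, while conversely $u(x):=-v(x)^{2}$ lies in $\mathcal{I}_{2,[a,b]}$ for any such $v$. Since $\E[u(Y)]-\E[u(X)]=\E[v(X)^{2}]-\E[v(Y)^{2}]=p\,v(a)^{2}+(1-p)\,v(m_{3})^{2}-q\,v(m_{1})^{2}$, the assertion is equivalent to the statement that $p\,v(a)^{2}+(1-p)\,v(m_{3})^{2}\ge q\,v(m_{1})^{2}$ for every convex, decreasing, nonnegative $v$ on $[a,b]$ with $v(b)=0$.

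Granting this reformulation, the ``only if'' direction is immediate: applying it to the admissible function $v(x):=b-x$ gives $p(b-a)^{2}+(1-p)(b-m_{3})^{2}\ge q(b-m_{1})^{2}$, i.e.\ $p(e_{x}-e_{y}+L)^{2}+(1-p)(e_{x}-e_{y})^{2}\ge qL^{2}$, which is exactly \eqref{Ineq:self-protection}.

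For the ``if'' direction I would assume \eqref{Ineq:self-protection} together with $\E[X]\ge\E[Y]$ and fix an admissible $v$. If $v(m_{1})=0$ then $v(m_{3})=0$ by monotonicity and the inequality is trivial; otherwise, by homogeneity, rescale so that $v(m_{1})=1$. The monotone–secant–slope property of the convex function $v$ at the points $a<m_{1}<m_{3}<b$, together with $v(b)=0$, produces two linear constraints on $(x,y):=(v(a),v(m_{3}))$: comparing secant slopes on $[m_{1},m_{3}]$ and $[m_{3},b]$ yields $y\le\delta/L$, and comparing those on $[a,m_{1}]$ and $[m_{1},m_{3}]$ yields $(L-\delta)x+\delta y\ge L$; also $x,y\ge 0$. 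Hence $(x,y)$ lies in the polyhedron $R:=\{x\ge 0,\ 0\le y\le\delta/L,\ (L-\delta)x+\delta y\ge L\}$, so it suffices to prove $p x^{2}+(1-p)y^{2}\ge q$ on $R$. Since $p x^{2}+(1-p)y^{2}$ is increasing in each of $x\ge 0,y\ge 0$, its minimum over $R$ is attained on the facet $(L-\delta)x+\delta y=L$; parametrizing $x=(L-\delta y)/(L-\delta)$ with $y\in[0,\delta/L]$ turns the objective into a convex function $\psi(y)$ whose unconstrained minimizer $y^{\ast}$ satisfies $y^{\ast}\ge\delta/L$ iff $2p\ge 1-\delta/L$ (a short computation). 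Both hypotheses enter here: $\E[X]\ge\E[Y]$ reads $(q-p)L\ge\delta$, while \eqref{Ineq:self-protection} expands to $q\le p+2p\delta/L+\delta^{2}/L^{2}$, and combining the two and dividing by $\delta/L>0$ gives exactly $2p\ge 1-\delta/L$. Therefore $\psi$ is decreasing on $[0,\delta/L]$, so $\min_{R}\big(p x^{2}+(1-p)y^{2}\big)=\psi(\delta/L)=\big(p(L+\delta)^{2}+(1-p)\delta^{2}\big)/L^{2}\ge q$, the last inequality being \eqref{Ineq:self-protection} once more. This proves $p\,v(a)^{2}+(1-p)\,v(m_{3})^{2}\ge q\,v(m_{1})^{2}$ for all admissible $v$, hence $Y\succeq_{2,[a,b]-I}X$. (This ``if'' argument is exactly the content one would isolate as a two-point lemma, presumably Lemma~\ref{Lemma:two-point}, used together with the monotonicity of the orders in $\alpha$ from Proposition~\ref{prop:2.5}(i).)

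The step I expect to be the main obstacle is the ``if'' direction, and within it the observation that the two hypotheses must be used \emph{jointly}: convexity alone confines $(v(a),v(m_{3}))$ to the polyhedron $R$, but over $R$ the quadratic $p\,v(a)^{2}+(1-p)\,v(m_{3})^{2}$ need not dominate $q$ — it does so precisely when the binding corner of $R$ is $\big((L+\delta)/L,\ \delta/L\big)$, and the expected-value hypothesis $\E[X]\ge\E[Y]$ is exactly what guarantees that this is the binding corner, by ruling out the competing regime $2p<1-\delta/L$. Everything else — the change of variables, which is forced by the definition of $\mathcal{I}_{2,[a,b]}$, and the three-chord slope inequalities for convex functions — is routine.
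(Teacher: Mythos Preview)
Your proof is correct, but it follows a genuinely different route from the paper's. The paper proves Proposition~\ref{Prop:Self-protection} via Lemma~\ref{Lemma:two-point}, and that lemma is in turn proved through the machinery of the $2,[a,b]$-\emph{sufficient} stochastic order: one checks the two integral conditions of Proposition~\ref{prop:4} for all $c\in[x_1,x_4]$ by a case analysis on the location of $c$, shows that condition~\eqref{ine:2} is equivalent to the second-moment inequality (this is where the expected-value hypothesis enters, forcing the minimum of a convex quadratic in $c$ to the endpoint $c=x_4$), verifies that condition~\eqref{ine:2} implies condition~\eqref{ine:1}, and then invokes Corollary~\ref{coro:1}. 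In other words, the paper tests the order on the generating family $\{-\max\{c_1-x,0\}\max\{c_2-x,0\}\}$ rather than on all of $\mathcal{I}_{2,[a,b]}$.

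You instead work directly from the definition: the change of variables $u=-v^2$ reduces the problem to a quadratic inequality in the values $(v(a),v(m_3))$ of an arbitrary convex decreasing $v$ vanishing at $b$, the three-chord slope inequalities confine these values to an explicit polyhedron, and the minimization is solved by hand. The mechanism that makes the expected-value hypothesis bite is the same in both proofs --- it forces a convex one-variable function to attain its minimum at the right endpoint --- but your argument is self-contained and bypasses Propositions~\ref{prop:3}--\ref{prop:4} and Corollary~\ref{coro:1} entirely. The trade-off is that the paper's route simultaneously establishes that the sufficient order and the concave order coincide for these two-point distributions, which is informative in its own right; your route gets only what the proposition asks for, but does so with less overhead. (Your closing parenthetical guess that Proposition~\ref{prop:2.5}(i) is needed is harmless but off: the lemma is already stated for $\alpha=2$, so no monotonicity-in-$\alpha$ step is required.)
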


The interpretation of inequality (\ref{Ineq:self-protection}) is straightforward. For simplicity, normalize $e_{y}$ to be $0$ so $e_{x}$ is the amount that the agent can spend on self-protection to decrease the probability of a loss to $p$. In this case, the agent has a wealth of $w$ in any realization. If the agent does not spend on self-protection, then the random variable $\tilde{Y}$ that yields $L$ with probability $q$ and $0$ with probability $1-q$ represents the agent's future loss. When the agent chooses to spend $e_{x}$ on self-protection then the random variable $\tilde{X}$ that yields $e_{x}+L$ with probability $p$ and $e_{x}$ with probability $1-p$ represents the future loss (in this case the agent loses the expenditure on self-protection $e_{x}$ in any outcome). Our results show that if the expected loss under $\tilde{Y}$ is higher than under $\tilde{X}$ and the second moment of $\tilde{X}$ is higher than the second moment of $\tilde{Y}$ then the decision maker should not spend on self-protection according to the decision rule that is based on the $2,[w-L-e_{x},w]$-concave stochastic order. That is, if spending on self-protection increases the risk (captured by the second moment) of future loss, then the decision maker does not increase the expenditure on self-protection even when the increase in  self-protection increases the expected value of the decision maker's final wealth. 

In the self-protection problem that we study in this section, a simple condition that relates to the distributions'  first and second moments  captures the trade off between expected value and riskiness that the $2$-concave stochastic order provides. We show in the appendix that this is true for general distributions whose supports contain exactly two elements.\footnote{In the special case of distributions whose supports contain exactly two elements, conditions on the first two moments imply domination in the $2,[a,b]$-concave stochastic order (see Lemma \ref{Lemma:two-point} in the appendix). This is somewhat intuitive because information on the first two moments essentially determines the distributions. 
}

\subsection{A Diamond-type search model with one-sided incomplete information}
In this section we a study a Diamond-type search model studied in \cite{diamond1982aggregate}
and \cite{milgrom1990rationalizability} to a one-sided incomplete information
framework. Consider the case of two agents that benefit from a match. We analyze the case where one player has better information than the other. For instance,
one player has been in the market for a long time and his type is known, whereas the second player just entered the market, so his type is unknown. We show
that a shift (in the sense of the $\alpha,[a ,b]$-concave stochastic order) in the uninformed player's beliefs about the informed player's type leads to an increase in the highest
equilibrium probability of matching. We now describe the Bayesian game. 

There are two players who exert efforts in order to find a
match. Each player exerts a costly effort $e_{i} \in E:=[0 ,1]$, in order to achieve a match. For each player, the value of a match is one. The probability of matching is $e_{1} e_{2}$, given the efforts $e_{1} ,e_{2}$.  The cost of Player $1$'s (the uninformed player) effort is given by a strictly convex and strictly increasing function $c_{1} (e)$ that is known to both players. The cost of Player $2$'s effort is given by $c_{2} (e ,\theta ):=\frac{e_{2}^{k +1}}{(k +1)(1 -\theta )^{l}}$ where $\theta  \in [0 ,1)$ is Player $2$'s type which is not known to Player $1$, and $k ,l >0$ are some parameters. Player $1$'s beliefs about the value of $\theta $ are given by a distribution $F$ with support on $[0 ,1)$. 

Standard arguments show that this game is a supermodular game, and thus the highest and the lowest equilibria
exist (see \cite{topkis1979equilibrium} and the Appendix for more details).\footnote{The solution concept we use is the standard Bayesian Nash equilibrium. We define it formally in the appendix. }
Define $\bar{m} (F) =\bar{e}_{1}^{ \ast } \bar{e}_{2}^{ \ast } (\theta )$ to be the highest equilibrium probability of matching. Under certain parameters, we show that a shift in Player 1's beliefs, in the sense of the $\alpha  ,[0 ,1]$-concave stochastic order, leads to an increase in the highest equilibrium probability of matching.

\begin{proposition}
\label{prop:7} \label{Prop: game}Fix $\alpha  \geq 1$. Suppose that $l \geq \alpha k$. If $F^{ \prime }  \succeq _{\alpha ,[0 ,1] -I}F$ then $\bar{m} (F^{ \prime }) \leq \bar{m} (F)$. That is, the highest equilibrium probability of matching is decreasing with respect to the $\alpha,[0,1]$-concave stochastic order. 
\end{proposition}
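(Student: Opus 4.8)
The plan is to reduce the comparative statics claim to a single-agent optimization argument by analyzing Player~1's best response against the distribution of $\theta$. First I would solve Player 2's problem: given Player 1's effort $e_1$, Player 2 of type $\theta$ maximizes $e_1 e_2 - e_2^{k+1}/((k+1)(1-\theta)^l)$, which gives the interior best response $e_2^*(\theta;e_1) = (e_1 (1-\theta)^l)^{1/k} = e_1^{1/k}(1-\theta)^{l/k}$ (one must check this lies in $[0,1]$ for the relevant range, which holds since $e_1\le 1$ and $\theta\in[0,1)$). Substituting into Player 1's payoff, Player 1 chooses $e_1$ to maximize
$$
\phi(e_1,F) := e_1 \int_0^1 e_2^*(\theta;e_1)\,dF(\theta) - c_1(e_1) = e_1^{(k+1)/k}\int_0^1 (1-\theta)^{l/k}\,dF(\theta) - c_1(e_1).
$$
Because $c_1$ is strictly convex and strictly increasing and the first term is increasing in the ``effective belief'' $B(F):=\int_0^1 (1-\theta)^{l/k}\,dF(\theta)$, the objective $\phi$ has strictly increasing differences in $(e_1, B)$; hence the highest fixed point $\bar e_1^*$ is monotone increasing in $B(F)$, and then $\bar e_2^*(\theta)$ is increasing in $\bar e_1^*$ pointwise, so $\bar m(F) = \bar e_1^* \int_0^1 e_2^*(\theta;\bar e_1^*)\,dF(\theta) = (\bar e_1^*)^{(k+1)/k} B(F)$ is monotone increasing in $B(F)$. (I would lean on the supermodular-game structure cited in the text for existence of the extremal equilibria and for the monotone-comparative-statics step; this is the standard Topkis/Milgrom–Roberts argument.)

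The crux is therefore to show that $F'\succeq_{\alpha,[0,1]-I}F$ implies $B(F')\le B(F)$, i.e.\ $\int_0^1 (1-\theta)^{l/k}\,dF'(\theta)\le \int_0^1 (1-\theta)^{l/k}\,dF(\theta)$. Equivalently, writing $u(\theta) := -(1-\theta)^{l/k}$, I want $u\in\mathcal I_{\alpha,[0,1]}$, because then by definition of the order $\int u\,dF' \ge \int u\,dF$, which is exactly the desired inequality. So the whole proof comes down to verifying that the function $u(\theta)=-(1-\theta)^{l/k}$ on $[0,1]$ is $\alpha,[0,1]$-concave, i.e.\ that $u$ is increasing and that $u(1)-u(\theta) = (1-\theta)^{l/k}$ is $\alpha$-convex.

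That $u$ is increasing on $[0,1]$ is immediate since $l/k>0$. For $\alpha$-convexity of $v(\theta):=(1-\theta)^{l/k}$, note $v(\theta)^{1/\alpha} = (1-\theta)^{l/(\alpha k)}$; this is convex on $[0,1]$ precisely when the exponent $l/(\alpha k)$ is either $\ge 1$ or $\le 0$. Since the exponent is positive, the relevant condition is $l/(\alpha k)\ge 1$, i.e.\ $l\ge \alpha k$ — which is exactly the hypothesis of the Proposition. (Alternatively one can run the smooth characterization $v v'' \ge (v')^2(\alpha-1)/\alpha$ from Definition~\ref{definition:1}'s discussion: with $v=(1-\theta)^{\beta}$, $\beta := l/k$, one gets $v v'' = \beta(\beta-1)(1-\theta)^{2\beta-2}$ and $(v')^2 = \beta^2(1-\theta)^{2\beta-2}$, so the inequality reads $\beta-1 \ge \beta(\alpha-1)/\alpha$, i.e.\ $\beta\ge\alpha$, the same condition; the degenerate cases $\beta=1$ and the boundary $\theta\to 1$ are handled directly or by continuity/closure of $\mathcal I_{\alpha,[0,1]}$.)

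The main obstacle I anticipate is not the convexity computation — which is short — but being careful with the game-theoretic plumbing: confirming that the extremal Bayesian Nash equilibria are characterized by the extremal fixed points of Player 1's (scalar) best-response map against the fixed belief $F$, that corner solutions ($e_1^*=1$ or $e_1^*=0$, $e_2^*(\theta)$ hitting $1$) do not break the monotonicity, and that the monotone comparative statics genuinely delivers a ranking of the \emph{highest} equilibria rather than merely of some selection. Once the problem is correctly reduced to ``$\bar m(F)$ is increasing in $B(F)$ and $B(\cdot)$ is monotone in the $\alpha,[0,1]$-concave order,'' the remaining content is exactly the one-line check $l\ge\alpha k \iff u(\theta)=-(1-\theta)^{l/k}\in\mathcal I_{\alpha,[0,1]}$.
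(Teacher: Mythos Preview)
The heart of your argument --- verifying that $u(\theta)=-(1-\theta)^{l/k}$ lies in $\mathcal I_{\alpha,[0,1]}$ exactly when $l\ge\alpha k$, so that $B(F):=\int (1-\theta)^{l/k}\,dF$ is monotone in the $\alpha,[0,1]$-concave order --- is correct and is precisely the key computation in the paper. The reduction to a scalar parameter $B(F)$ is a clean way to package the paper's Step~1.

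There is, however, a genuine slip in the game-theoretic reduction. When you ``substitute $e_2^*(\theta;e_1)$ into Player~1's payoff'' and maximize
\[
\phi(e_1,F)=e_1^{(k+1)/k}B(F)-c_1(e_1),
\]
you are solving the Stackelberg problem, not the Bayesian Nash equilibrium. In a BNE Player~1 takes $e_2^*(\theta)$ as \emph{fixed}, so his problem is $\max_{e_1}\{e_1 A - c_1(e_1)\}$ with $A=\int e_2^*(\theta)\,dF(\theta)$ treated as a constant; equilibrium is then a fixed point of the map $e\mapsto \arg\max_{e_1}\{e_1\cdot e^{1/k}B(F)-c_1(e_1)\}$. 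The first-order conditions of your $\phi$ and of the true best-response problem differ by the factor $(k+1)/k$, so your $\phi$ does not deliver the equilibrium effort (and, being a one-shot maximization, it has no ``highest fixed point'' to speak of). The paper handles this by defining the operator $y(e,F)$ --- Player~1's best response to Player~2's best response to effort $e$ --- showing $y$ is increasing in $e$ and in $F$ for the $\alpha,[0,1]$-convex order, and then invoking Topkis on the greatest fixed point. Your observation that everything depends on $F$ only through $B(F)$ survives and could streamline that argument, but you must run it on the correct fixed-point map. A minor additional point: $\bar m(F)$ in the paper is the \emph{pointwise} product $\bar e_1^*\bar e_2^*(\theta)$, not its integral over $\theta$; your pointwise monotonicity remark covers this, but the displayed formula $\bar m(F)=(\bar e_1^*)^{(k+1)/k}B(F)$ does not match the definition.
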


We note that Proposition \ref{Prop: game}
allows us to derive non-trivial comparative statics results. Assume that $F^{ \prime }  \succeq _{\alpha  ,[0 ,1] -I}F$. $F^{ \prime }$ might have a lower expected value than $F$, which means that the uninformed player thinks that the informed player's cost has a lower expected value. Thus, the uninformed
player should increase his effort, since he is expecting that the informed player will increase his effort. On the other hand, $F^{ \prime }$ is less riskier than $F$ and this induces the uninformed player to decrease his effort. Proposition \ref{Prop: game}
shows that ultimately, in equilibrium, the latter effect is stronger. Thus, the efforts of both players decrease and the equilibrium probability of matching
decreases. 

\subsection{Uniform distributions and inequalities for $2,[a,b]$-convex decreasing  functions} \label{sec:hh ineq}
Convex functions are fundamental in proving many well-known inequalities. The convex stochastic order is a powerful tool for proving inequalities that involve convex functions (see \cite{rajba2017some} for a survey). In this section we prove  inequalities for convex functions that belong to the set $\mathcal{I}_{2 ,[a ,b]}$ using the $2 ,[a ,b]$-concave stochastic order. We first compare two general uniform distributions that are of independent interest.

We consider two uniform random variables. Suppose that $G \sim U[a_1,b_1]$ and $F \sim U[a_2,b_2]$ where $U[a,b]$ is the continuous uniform random variable on $[a,b]$. 
The following Lemma provides a necessary and sufficient condition on the parameters $(a_1,b_1,a_2,b_2)$ so that $F\succeq_{2,[a_1,b_1]-I} G$.
\begin{lemma}\label{Lemma:1}
	 Suppose that $G\sim U[a_1,b_1]$ and $F\sim U[a_2,b_2]$. Assume that $a_1<a_2<b_2<b_1$ and  $\frac{a_1+b_1}2>\frac{a_2+b_2}2$.\footnote{If this does not hold then the expected value of $F$ is higher or equal to the expected value of 
$G$, so we clearly have $F\succeq_{1,[a_1,b_1]-I} G$. That is, $F$ dominates $G$ in the second order stochastic dominance. } Then $F\succeq_{2,[a_1,b_1]-I} G$ if and only if 
	\begin{equation}\label{ine:dist}
	b_1 \le \frac{3(a_2+b_2)-2a_1 + \sqrt{a_2^2+10a_2b_2+b_2^2-12a_1(a_2+b_2-a_1)}}4 \;.
	\end{equation}
\end{lemma}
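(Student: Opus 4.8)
The plan is to invoke Corollary~\ref{coro:1}, which for this pair $F,G$ reduces the claim to understanding the family of inequalities \eqref{ine:2} alone — once we verify its hypothesis, namely that \eqref{ine:2} holding for every $c$ forces \eqref{ine:1} to hold for every $c$. Put $\Phi(c)=\int_{a_1}^{c}\big(F(x)-G(x)\big)\,dx$ and $\Psi(c)=\int_{a_1}^{c}\Phi(x)\,dx$, so that \eqref{ine:2} reads $\Psi(c)\le 0$ and \eqref{ine:1} reads $(b_1-c)\Phi(c)+2\Psi(c)\le 0$. The structural fact driving everything is the sign pattern of $h:=F-G$: on $[a_1,a_2]$ we have $h=-G\le 0$; on $[a_2,b_2]$, $h$ is affine with positive slope $\tfrac1{b_2-a_2}-\tfrac1{b_1-a_1}$ and so crosses $0$ exactly once; on $[b_2,b_1]$, $h=(b_1-x)/(b_1-a_1)\ge 0$. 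Hence $\Phi$ first decreases then increases on $[a_1,b_1]$, with $\Phi(a_1)=0$ and $\Phi(b_1)=\E_G[X]-\E_F[X]=\tfrac{a_1+b_1}{2}-\tfrac{a_2+b_2}{2}>0$ by hypothesis; so $\Phi$ has a unique zero $c^{\ast}\in(a_1,b_1)$, is $<0$ on $(a_1,c^{\ast})$, is $>0$ on $(c^{\ast},b_1)$, and is nondecreasing on $[c^{\ast},b_1]$.

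Granting this, I would verify the hypothesis of Corollary~\ref{coro:1} as follows. Suppose $\Psi(c)\le 0$ for all $c$. If $c\le c^{\ast}$ then $(b_1-c)\Phi(c)\le 0$, and adding $2\Psi(c)\le 0$ gives \eqref{ine:1}. If $c\ge c^{\ast}$, monotonicity of $\Phi$ on $[c^{\ast},b_1]$ gives $\Psi(b_1)-\Psi(c)=\int_c^{b_1}\Phi\ge(b_1-c)\Phi(c)$, so $(b_1-c)\Phi(c)+2\Psi(c)\le \Psi(b_1)+\Psi(c)\le 0$, the last step using $\Psi(b_1)\le 0$ (the instance $c=b_1$ of \eqref{ine:2}). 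Thus Corollary~\ref{coro:1} yields $F\succeq_{2,[a_1,b_1]-I}G$ if and only if $\Psi(c)\le 0$ for all $c\in[a_1,b_1]$. Since $\Psi'=\Phi$ is $<0$ on $(a_1,c^{\ast})$ and $>0$ on $(c^{\ast},b_1)$, $\Psi$ is decreasing then increasing with $\Psi(a_1)=0$, so $\max_{[a_1,b_1]}\Psi=\max\{0,\Psi(b_1)\}$; hence \eqref{ine:2} holds for every $c$ if and only if $\Psi(b_1)\le 0$.

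It then remains to make $\Psi(b_1)\le 0$ explicit. By Lemma~\ref{lemma:0} applied with $c=b_1$ (where $\max\{b_1-x,0\}=b_1-x$ on the common support), $2\Psi(b_1)=\E_F\big[(b_1-X)^2\big]-\E_G\big[(b_1-X)^2\big]$, and direct integration gives $\E_G[(b_1-X)^2]=\tfrac13(b_1-a_1)^2$ and $\E_F[(b_1-X)^2]=\tfrac13\big((b_1-a_2)^2+(b_1-a_2)(b_1-b_2)+(b_1-b_2)^2\big)$. Thus $\Psi(b_1)\le 0$ is equivalent to $\phi(b_1)\le 0$, where $\phi(t)=2t^2-\big(3(a_2+b_2)-2a_1\big)t+\big(a_2^2+a_2b_2+b_2^2-a_1^2\big)$; expanding its discriminant gives exactly $a_2^2+10a_2b_2+b_2^2-12a_1(a_2+b_2-a_1)$, the quantity under the root in \eqref{ine:dist}. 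Since $\phi$ opens upward, $\phi(b_1)\le 0$ means $b_1$ lies between the two roots of $\phi$; but $\phi(b_2)=(a_2-a_1)(a_1+a_2-2b_2)<0$ because $a_1<a_2<b_2$, so $b_2$ lies strictly between the roots and, as $b_1>b_2$, $b_1$ automatically exceeds the smaller root. Hence $\phi(b_1)\le 0$ reduces to $b_1\le\frac{3(a_2+b_2)-2a_1+\sqrt{a_2^2+10a_2b_2+b_2^2-12a_1(a_2+b_2-a_1)}}{4}$, which is precisely \eqref{ine:dist}.

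The step I expect to be the main obstacle is establishing the structural picture of $h=F-G$ and $\Phi$ — that $\Phi$ is unimodal with a single interior zero and nondecreasing past that zero — since the verification of the hypothesis of Corollary~\ref{coro:1}, the identification of $c=b_1$ as the binding case of \eqref{ine:2}, and the reduction to a one-variable quadratic all rest on it. A secondary (and easy-to-miss) subtlety is that $\phi(b_1)\le 0$ is a two-sided constraint, and that evaluating $\phi$ at $b_2$ is what shows only the upper bound \eqref{ine:dist} is active under the stated hypotheses.
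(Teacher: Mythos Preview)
Your proof is correct and follows the same overall strategy as the paper --- invoke Corollary~\ref{coro:1}, reduce everything to the single endpoint inequality $\Psi(b_1)\le 0$, and solve the resulting quadratic in $b_1$ --- but your execution is cleaner in two places. First, to verify that \eqref{ine:2} implies \eqref{ine:1}, the paper computes the function $(b_1-c)\Phi(c)+2\Psi(c)$ piecewise on $[a_1,a_2]$, $(a_2,b_2)$, $[b_2,b_1]$ and checks convexity/concavity on each piece; you instead exploit the unimodality of $\Phi$ directly via the one-line estimate $\int_c^{b_1}\Phi\ge(b_1-c)\Phi(c)$, which is shorter and avoids all piecewise bookkeeping. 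Second, to dispose of the lower root of the quadratic, the paper uses the mean assumption to bound $b_1>b_2+a_2-a_1$ and then compares that number to the smaller root; your evaluation $\phi(b_2)=(a_2-a_1)(a_1+a_2-2b_2)<0$ is a more direct way to see that $b_2$ (and hence $b_1>b_2$) already exceeds the smaller root. Neither change affects the logic, and both give the same final inequality \eqref{ine:dist}.
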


Lemma \ref{Lemma:1} can be used to prove non-trivial inequalities that involve concave functions. The lemma implies that if inequality (\ref{ine:dist}) holds, then for every $2,[a_{1},b_{1}]$-concave function $u$ we have $$\int_{a_{2}}^{b_{2}} u(x) dF(x)\ge \int_{a_{1}}^{b_{1}} u(x)dG(x).$$

We leverage Lemma \ref{Lemma:1} to prove Hermite-Hadamard inequalities for $2 ,[a ,b]$-concave functions. Hermite-Hadamard inequalities   are important in the literature on inequalities and have numerous applications in various fields of mathematics (see \cite{peajcariaac1992convex} and \cite{dragomir2003selected}).
Recall that the classical Hermite-Hadamard inequality states that for a convex function $f :\left [a ,b\right ] \rightarrow \mathbb{R}$ we have\begin{equation}f\genfrac{(}{)}{}{}{a +b}{2} \leq \frac{1}{b -a}\int _{a}^{b}f(x)dx \leq \frac{f\left (a\right ) +f(b)}{2} . \label{Ineq: HH}
\end{equation}

Inequality (\ref{Ineq: HH}) is an easy consequence of the convex stochastic order. The left-hand-side of the inequality states that the uniform random variable on $[a ,b]$ dominates the random variable that yields an amount of $(a +b)/2$ with probability $1$ in the sense of the convex stochastic order. The right-hand-side of the inequality states that the uniform random variable on $[a ,b]$ is dominated by the random variable that yields $a$ and $b$ with probability $1/2$ each in the sense of the convex stochastic order. Using a similar stochastic orders approach, we now extend and improve this inequality for functions $f \in - \mathcal{I}_{2 ,[a ,b]}$.

\begin{proposition}\label{prop:HH ineq}
Suppose that $f \in - \mathcal{I}_{2 ,[a ,b]}$ where $a <b$. Then 
\begin{equation}f\left (\gamma b +\left (1 -\gamma \right )a\right ) \leq \frac{1}{b -a}\int _{a}^{b}f\left (x\right )dx \leq tf\left (a\right ) +\left (1 -t\right )f(b)
\end{equation}for all $t \geq \frac{1}{3}$ and for all $\gamma  \geq \frac{2}{3 +\sqrt{3}}$. 
\end{proposition}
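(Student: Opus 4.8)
The plan is to realize both inequalities as instances of the $2,[a,b]$-concave stochastic order, exactly as the classical Hermite--Hadamard inequality is an instance of the convex stochastic order. Set $U \sim U[a,b]$, and observe that $f \in -\mathcal{I}_{2,[a,b]}$ means that $-f$ is increasing with $(-f)(b) - (-f)(x) = f(x) - f(b)$ being $2$-convex; equivalently $f$ is decreasing and $f(x)-f(b)$ has a convex square root. The right-hand inequality $\frac{1}{b-a}\int_a^b f\,dx \le t f(a) + (1-t) f(b)$ should follow by showing that the two-point lottery $X_t$ placing mass $t$ on $a$ and mass $1-t$ on $b$ satisfies $X_t \succeq_{2,[a,b]-I} U$ for $t \ge 1/3$; then $\E[f(X_t)] = t f(a) + (1-t) f(b) \le$... wait, the order goes the wrong way. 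Since $f \in -\mathcal{I}_{2,[a,b]}$, for $u \in \mathcal{I}_{2,[a,b]}$ we have $\int u\,dF \ge \int u\,dG$ iff $F \succeq G$, so applying this to $u = -f$: $X_t \succeq_{2,[a,b]-I} U$ gives $\E[-f(X_t)] \ge \E[-f(U)]$, i.e. $t f(a) + (1-t) f(b) \le \frac{1}{b-a}\int_a^b f\,dx$ — still the wrong direction. So in fact I want $U \succeq_{2,[a,b]-I} X_t$, giving $\E[-f(U)] \ge \E[-f(X_t)]$, i.e. the desired right-hand inequality. Similarly the left-hand inequality $f(\gamma b + (1-\gamma)a) \le \frac{1}{b-a}\int_a^b f\,dx$ should follow from $\delta_{\gamma b + (1-\gamma)a} \succeq_{2,[a,b]-I} U$ for $\gamma \ge 2/(3+\sqrt 3)$, where $\delta_c$ is the point mass at $c$.

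The key step is therefore to verify these two stochastic dominance relations, and the natural tool is Corollary~\ref{coro:1} together with Proposition~\ref{prop:4}, or a direct application of Lemma~\ref{Lemma:1}. For the right-hand inequality, I would compute the relevant integrated-CDF quantities: writing $F$ for the CDF of $U[a,b]$ and $G$ for the CDF of $X_t$ (which is $G(x) = t$ on $[a,b)$), I check conditions \eqref{ine:1} and \eqref{ine:2} of Proposition~\ref{prop:4} for the pair (with $U$ in the role of the dominating distribution). The function $\int_a^x F - \int_a^x G$ is an explicit quadratic in $x$ that vanishes at $x=a$; condition \eqref{ine:2} asks that its integral over $[a,c]$ be $\le 0$ for all $c$, and condition \eqref{ine:1} adds the boundary term $(b-c)[\cdots]$. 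Evaluating, condition \eqref{ine:2} at $c=b$ recovers the expectation comparison $\E[U] \le \E[X_t]$, i.e. $\frac{a+b}{2} \le t a + (1-t) b$, i.e. $t \le 1/2$; but the binding constraint giving $t \ge 1/3$ should come from \eqref{ine:1} — intuitively because the square in \eqref{ine:1}, like the $(b-c)$-weighting, penalizes exactly the region where the uniform puts more mass low. For the left-hand inequality the cleanest route is to take a degenerate limit of Lemma~\ref{Lemma:1}: let $F$ be the point mass at $c_0 := \gamma b + (1-\gamma)a$, obtained as the limit $a_2 \to c_0^-$, $b_2 \to c_0^+$ (so $\frac{a_2+b_2}{2} = c_0$), and $G = U[a,b]$, so $(a_1,b_1)=(a,b)$. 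Condition \eqref{ine:dist} then reads $b \le \frac{3 \cdot 2c_0 - 2a + \sqrt{c_0^2 + 10 c_0^2 + c_0^2 - 12a(2c_0 - a)}}{4} = \frac{6c_0 - 2a + \sqrt{12 c_0^2 - 24 a c_0 + 12 a^2}}{4} = \frac{6 c_0 - 2a + 2\sqrt{3}\,|c_0 - a|}{4}$; substituting $c_0 - a = \gamma(b-a)$ and $c_0 = a + \gamma(b-a)$ and simplifying gives $b - a \le \frac{(6\gamma + 2\sqrt 3 \gamma)(b-a)}{4}$, i.e. $4 \le (6 + 2\sqrt 3)\gamma$, i.e. $\gamma \ge \frac{2}{3+\sqrt 3}$, precisely the stated threshold. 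I should double-check that the hypotheses of Lemma~\ref{Lemma:1} ($a_1 < a_2 < b_2 < b_1$ and the mean comparison) survive the degenerate limit, or else reprove the point-mass case directly via Proposition~\ref{prop:4} (which is routine since $F$ is a point mass).

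The main obstacle I anticipate is the right-hand inequality: Lemma~\ref{Lemma:1} is stated only for two genuine uniforms, not for a uniform versus a two-point distribution, so I cannot simply quote it, and I will need to run Proposition~\ref{prop:4} by hand for the pair $(U[a,b],\, X_t)$. The bookkeeping — writing $\int_a^x F(z)\,dz = \frac{(x-a)^2}{2(b-a)}$, $\int_a^x G(z)\,dz = t(x-a)$, forming the difference, integrating once more, and then imposing both \eqref{ine:1} and \eqref{ine:2} uniformly in $c \in [a,b]$ — is elementary but must be done carefully to see that the worst $c$ yields exactly $t \ge 1/3$ and not some other constant; the optimization over $c$ (maximizing a cubic in $c$ on $[a,b]$) is where an arithmetic slip is most likely. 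I would also need to confirm that the inequalities in the Proposition are in the correct orientation for $U$ to dominate $X_t$ (and hence, via $u = -f$, deliver the stated chain), since the direction conventions flip twice — once from $\mathcal{I}$ to $-\mathcal{I}$ and once from "dominates" to the inequality on $\int u\,dF$. Apart from that the argument is a direct translation of the classical proof of Hermite--Hadamard into the language of the $2,[a,b]$-concave order.
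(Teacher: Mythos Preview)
Your approach to the left-hand inequality is essentially the paper's: degenerate the inner uniform in Lemma~\ref{Lemma:1} to a point mass at $c_0=\gamma b+(1-\gamma)a$, and your algebra correctly recovers the threshold $\gamma=2/(3+\sqrt 3)$. The paper makes the limit rigorous via the mean value theorem for integrals, writing $\tfrac{1}{b_n-a_n}\int_{a_n}^{b_n} f = f(\zeta_n)$ with $\zeta_n\to c_0$; this dispatches your concern about the strict inequalities $a_1<a_2<b_2<b_1$ surviving the limit. For $\gamma>1/2$ the mean hypothesis of Lemma~\ref{Lemma:1} fails, but then $\delta_{c_0}$ dominates $U[a,b]$ in SOSD, so that regime is trivial.

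For the right-hand inequality your route through Proposition~\ref{prop:4} would succeed, but your scratch analysis contains an error that would send you in circles: condition~\eqref{ine:2} at $c=b$ is \emph{not} the expectation comparison. With $F$ the CDF of $U[a,b]$ and $G$ that of $X_t$, one has $\int_a^c\!\int_a^x(F-G)\,dz\,dx=\tfrac{(c-a)^3}{6(b-a)}-\tfrac{t(c-a)^2}{2}$, so condition~\eqref{ine:2} reads $t\ge\tfrac{c-a}{3(b-a)}$ for all $c$, i.e.\ $t\ge 1/3$ --- not $t\le 1/2$. (Condition~\eqref{ine:1} reduces to the same bound after a short computation.) More to the point, the paper bypasses Proposition~\ref{prop:4} here entirely: the case $t=1/3$ is precisely Example~\ref{Example 3} with $\alpha=2$, which already asserts $U[a,b]\succeq_{2,[a,b]-I}X_{1/3}$; and for $t>1/3$ one just notes that $tf(a)+(1-t)f(b)$ is increasing in $t$ because $f$ is decreasing. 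That two-line argument is the intended one, and it saves you the cubic optimization you were bracing for.
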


\section{Concluding Remarks}\label{sec:con}

In this paper, we introduce the $\alpha,[a,b]$-concave  stochastic orders, a new family of stochastic orders that generalizes the second order stochastic dominance.  The $\alpha,[a,b]$-concave  stochastic orders provide a tool for deriving comparative statics results in applications from the economics literature that cannot be obtained using previous stochastic orders. We illustrate this in three different applications: Consumption-savings problems, self-protection problems and Bayesian games. 
We provide a simple sufficient conditions to ensure domination in the $\alpha,[a,b]$-concave stochastic order when $\alpha$ is a positive integer. We foresee additional beneficial applications of $\alpha,[a,b]$-concave stochastic orders, especially for comparing lotteries that have different expected values and different levels of risk.

\appendix

\section{Appendix: The maximal generator and other stochastic orders}\label{section:maximal}

 In this section we discuss the maximal generator of an integral stochastic order and discuss other stochastic orders that do not impose a ranking over the expectations of the random variables in consideration. We now define the maximal generator of an integral stochastic order.

Define $F \succeq _{\mathfrak{F}}G$ if \begin{equation*}\int _{a}^{b}u(x)dF(x) \geq \int _{a}^{b}u(x)dG(x)
\end{equation*}
for all $u \in \mathfrak{F}$ where $\mathfrak{F}\subseteq\mathcal{B}_{[a,b]}$.  The stochastic order $\succeq _{\mathfrak{F}}$ is called an integral stochastic order.

The maximal generator $R_{\mathfrak{F}}$ of the integral stochastic order $ \succeq _{\mathfrak{F}}$ is the set of all functions $u$ with the property that $F \succeq _{\mathfrak{F}}G$ implies \begin{equation*}\int _{a}^{b}u(x)dF(x) \geq \int _{a}^{b}u(x)dG(x).
\end{equation*}

\cite{muller1997stochastic} studies the properties of the maximal generator. In our context, Muller's results imply that the following Proposition holds. 

\begin{proposition}\label{Prop: maximal generator}
    (Corollary 3.8 in \cite{muller1997stochastic}). Suppose that $\mathfrak{F}\subseteq\mathcal{B}_{[a,b]}$ is a convex cone containing the constant functions and is closed under pointwise convergence. Then  $R_{\mathfrak{F}}=\mathfrak{F}$.
\end{proposition}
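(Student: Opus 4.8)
The plan is to follow the separation argument behind Corollary 3.8 of \cite{muller1997stochastic}. One inclusion is immediate: every $u\in\mathfrak{F}$ trivially has the defining property of $R_{\mathfrak{F}}$, since $F\succeq_{\mathfrak{F}}G$ means precisely that $\int_a^b v\,dF\ge\int_a^b v\,dG$ for all $v\in\mathfrak{F}$, and $u$ is one such $v$. Hence $\mathfrak{F}\subseteq R_{\mathfrak{F}}$, and the whole content lies in the reverse inclusion $R_{\mathfrak{F}}\subseteq\mathfrak{F}$. I would prove this by contraposition: given $u\in\mathcal{B}_{[a,b]}$ with $u\notin\mathfrak{F}$, I would exhibit two probability distributions $F,G$ on $[a,b]$ with $F\succeq_{\mathfrak{F}}G$ but $\int_a^b u\,dF<\int_a^b u\,dG$, which certifies $u\notin R_{\mathfrak{F}}$.

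To produce such $F,G$, I would view $\mathcal{B}_{[a,b]}$ inside a locally convex topology paired with the space $\mathcal{M}$ of finite signed Borel measures on $[a,b]$ via $v\mapsto\int_a^b v\,d\mu$, arranged so that the hypothesis ``$\mathfrak{F}$ is closed under pointwise convergence'' (of uniformly bounded sequences) forces $\mathfrak{F}$ to be closed in the relevant sense, while its dual is represented by measures in $\mathcal{M}$. Since $\mathfrak{F}$ is convex and closed, Hahn--Banach separation yields some $\mu\in\mathcal{M}$ with $\int_a^b u\,d\mu<\inf_{v\in\mathfrak{F}}\int_a^b v\,d\mu$. Because $\mathfrak{F}$ is a cone, if $\int_a^b v_0\,d\mu<0$ for some $v_0\in\mathfrak{F}$ then $\int_a^b (nv_0)\,d\mu\to-\infty$, making the infimum $-\infty$ and contradicting finiteness of $\int_a^b u\,d\mu$; hence $\int_a^b v\,d\mu\ge 0$ for all $v\in\mathfrak{F}$, the infimum is $0$, and $\int_a^b u\,d\mu<0$.

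Finally I would normalize $\mu$ into a difference of probability measures. Since the constants $1$ and $-1$ lie in $\mathfrak{F}$, applying the inequality $\int v\,d\mu\ge 0$ to $v=\pm 1$ gives $\mu([a,b])=0$; writing the Jordan decomposition $\mu=\mu^+-\mu^-$ and putting $m:=\mu^+([a,b])=\mu^-([a,b])$, we must have $m>0$ (else $\mu=0$, contradicting $\int u\,d\mu<0$), so $F:=\mu^+/m$ and $G:=\mu^-/m$ are distributions on $[a,b]$. For every $v\in\mathfrak{F}$, $\int_a^b v\,dF-\int_a^b v\,dG=\tfrac1m\int_a^b v\,d\mu\ge 0$, i.e.\ $F\succeq_{\mathfrak{F}}G$, while $\int_a^b u\,dF-\int_a^b u\,dG=\tfrac1m\int_a^b u\,d\mu<0$, so $u\notin R_{\mathfrak{F}}$. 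The main obstacle is the functional-analytic bookkeeping in the middle step: choosing a topology on the large space $\mathcal{B}_{[a,b]}$ for which closedness under pointwise convergence genuinely gives topological closedness (dominated convergence is a sequential statement, so some care with nets is needed), and for which the separating functional is represented by a countably additive finite measure on $[a,b]$ rather than merely a finitely additive set function. Resolving this is exactly what the machinery of \cite{muller1997stochastic} supplies, so in the write-up I would either invoke those results directly or reproduce the relevant lemmas; the cone-plus-separation skeleton above is otherwise routine.
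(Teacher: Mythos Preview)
The paper does not supply its own proof of this proposition; it is stated with attribution to Corollary~3.8 in \cite{muller1997stochastic} and used as a black box. Your sketch reproduces the standard Hahn--Banach separation argument behind that result, and the skeleton (separate $u$ from the closed convex set $\mathfrak{F}$, use the cone property to force $\int v\,d\mu\ge 0$ on $\mathfrak{F}$, use the constants to force $\mu([a,b])=0$, then Jordan-decompose and normalize to obtain probability measures $F,G$) is correct. You also correctly flag the one genuine technical issue---choosing a dual pairing so that pointwise closedness gives topological closedness and so that the separating functional is a countably additive finite signed measure rather than a finitely additive one---and you defer that to M\"uller's machinery, which is exactly what the paper itself does. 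Since the paper offers no proof to compare against, there is nothing further to contrast.
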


 From a decision theory point view, when using a stochastic order to determine whether a random variable is better or riskier than another random variable, it is important to characterize the maximal generator. If the maximal generator is not known, it is not clear what utility functions are under consideration when deciding if a random variable is better or riskier than another random variable. 

From Proposition~\ref{prop:2}, we have that $\mathcal{I}_{\alph}$ is a convex cone that is closed in the topology of pointwise convergence. Also, the set $\mathcal{I}_{\alph}$ contains all the constant functions. Hence, from Proposition \ref{Prop: maximal generator} we conclude that the maximal generator of the $\alpha,[a,b]$-concave  stochastic order $\succeq_{ \alph-I }$ is the set $\mathcal{I}_{\alph} $. 




We now show that a stochastic order that is based on the $\alpha$-convex and decreasing functions do not lead to an interesting new stochastic order. The reason is that the maximal generator of this stochastic order includes all the convex, positive, differentiable and decreasing functions (see Proposition \ref{prop:alpha-maximal} below). Hence, this stochastic order is essentially equivalent to SOSD. This result shows that studying stochastic orders that their maximal generator is unknown could be misleading. 

\begin{definition}
Consider two distributions $F$ and $G$ on $[a,b]$. We say that $F$ dominates $G$ in the $\alpha$-convex stochastic order, denoted by $F\succeq_{\alpha-DCX}G$, if for every decreasing and $\alpha$-convex function $u:[a,b] \to \R_+$, we have
$$\int_a^b u(x) dF(x) \ge \int_a^b u(x)  dG(x)\;.$$
\end{definition} 
Notice that the functions under consideration in this order have a constraint over the range: every function $u$ has to be non-negative. 

\begin{proposition}\label{prop:alpha-maximal}
	Let $\alpha > 1$. Then
	$F \succeq_{\alpha-DCX} G$ implies that for every convex and decreasing function $u:[a,b] \rightarrow \R_{+}$ that is twice differentiable we have 
	$$\int_a^b u(x) dF(x) \ge \int_a^b u(x)  dG(x).$$
\end{proposition}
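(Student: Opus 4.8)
The plan is to show that every convex, decreasing, positive, twice differentiable $v:[a,b]\to\R_+$ can be written, after a harmless additive constant and an arbitrarily small perturbation, as an $\alpha$-convex decreasing positive function, and then to use that additive constants do not affect integral comparisons between two probability distributions.

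First I would reduce to the strictly convex case. Fix once and for all a strictly convex, decreasing, positive, twice differentiable function $\psi$ on $[a,b]$ (for instance $\psi(x)=(b+1-x)^2$, so $\psi''\equiv 2$, or $\psi(x)=e^{-x}$). Given a general convex decreasing positive twice differentiable $v$, set $v_\delta:=v+\delta\psi$ for $\delta>0$. Then $v_\delta$ is again convex, decreasing and positive, it satisfies $v_\delta''\ge \delta\min_{[a,b]}\psi''>0$, and $v_\delta\to v$ uniformly on $[a,b]$ as $\delta\to 0^+$ since $\psi$ is bounded on the compact interval $[a,b]$.

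Next, for each fixed $\delta$, I would argue that $w:=v_\delta+M$ is $\alpha$-convex once the constant $M$ is taken large enough (depending on $\delta$). Since $w>0$ and $w$ is twice differentiable, by the characterization of $\alpha$-convexity for smooth positive functions recalled after Definition~\ref{definition:1} it suffices to verify $w\,w''\ge\frac{\alpha-1}{\alpha}(w')^2$, i.e.\ $(v_\delta+M)\,v_\delta''\ge\frac{\alpha-1}{\alpha}(v_\delta')^2$. As $v_\delta''$ is bounded below by a positive constant and $v_\delta'$ is continuous on $[a,b]$ hence bounded, the left-hand side is at least $M\cdot\min_{[a,b]}v_\delta''$ while the right-hand side is bounded, so any sufficiently large $M$ works. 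The resulting $w$ is moreover decreasing and positive, so it lies in the generating class of $\succeq_{\alpha-DCX}$.

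Finally I would assemble the pieces: $F\succeq_{\alpha-DCX}G$ gives $\int_a^b w\,dF\ge\int_a^b w\,dG$, and since $F$ and $G$ are probability measures the constant $M$ cancels, leaving $\int_a^b v_\delta\,dF\ge\int_a^b v_\delta\,dG$ for every $\delta>0$; letting $\delta\to 0^+$ and using the uniform convergence $v_\delta\to v$ together with finiteness of $F$ and $G$ yields $\int_a^b v\,dF\ge\int_a^b v\,dG$, as desired. The step most easily overlooked — and the main obstacle — is that the perturbation $v\mapsto v+\delta\psi$ cannot be skipped: where $v$ is affine one has $v\,v''=0<\frac{\alpha-1}{\alpha}(v')^2$, so no additive constant alone turns $v$ into an $\alpha$-convex function; strict convexity has to be forced first and then removed in the limit. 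It is also worth stating explicitly that the range restriction $u\ge 0$ in the definition of $\succeq_{\alpha-DCX}$ is immaterial here, precisely because shifting by a constant preserves the inequality $\int u\,dF\ge\int u\,dG$ when $F$ and $G$ are probability distributions.
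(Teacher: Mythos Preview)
Your proposal is correct and follows essentially the same three-step approach as the paper: perturb $v$ by a small multiple of a strictly convex decreasing function to force $v_\delta''\ge c_\delta>0$ (the paper uses $\frac{1}{n}(x-b)^2$), add a large constant to make the result $\alpha$-convex via the pointwise criterion $w\,w''\ge\frac{\alpha-1}{\alpha}(w')^2$, then cancel the constant and pass to the limit. Your explicit remark that the perturbation cannot be skipped because no additive constant alone makes an affine piece $\alpha$-convex is exactly the point the paper's structure reflects.
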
 
 
The above proposition shows that the $\alpha$-convex stochastic order is essentially the same as the well studied convex and decreasing stochastic order. Note that the set of decreasing $\alpha$-convex functions is a closed convex cone that is a strict subset of the set of decreasing convex functions. This, nevertheless, is not a contradiction of Proposition \ref{Prop: maximal generator}, because negative constant functions do not belong to the set of $\alpha$-convex functions. This fact also explains the proof of Proposition \ref{prop:alpha-maximal}. Informally, for every convex function $u$, there exists a constant $c > 0$ such that $u+c$ is essentially $\alpha$-convex. 

The above discussion is the reason that we introduce the set of functions $\mathcal{I}_{\alph}$ (and the related set $- \mathcal{I}_{\alph} $) which include all the constant functions. One limitation of these sets is that if $u \in \mathcal{I}_{\alph}$ and $u$ is twice differentiable, then $u'(b)=0$ (see Proposition \ref{prop:2}). That is, the decision makers under consideration when comparing two random variables have a $0$ marginal utility at the point $b$. One way to overcome this is to choose a large $b'$ such that it is plausible to assume that $u'(b^{\prime})=0$. Then, if $F$ and $G$ are distributions on $[a,b]$, we can use the fact that $F \succeq_{\alpha,[a,b']-I} G \Longrightarrow  F\succeq_{\alph-I} G$ (see Proposition \ref{prop:2.5}) to conclude that $F\succeq_{\alph-I} G$. 

\subsection{The $2$-sufficient stochastic order}

In this section we provide a partial characterization of the $2$-sufficient stochastic order. 
Recall that $F$ dominates $G$ in the $2,[a,b]$-sufficient stochastic order, i.e., $F \succeq_{2,[a,b]-S} G$ if and only if for all $c =(c_{1} ,c_{2})\in [a,b] \times [a,b]$ we have $$
	\int_a^b \max\{c_1-x,0\}\max\{c_2-x,0\} dF(x) \leq \int_a^b \max\{c_1-x,0\}\max\{c_2-x,0\} dG(x).$$
Hence, the $2,[a,b]$-sufficient stochastic order is generated by a simple integral inequality that naturally generalizes SOSD and is of independent interest. It is interesting to know that the maximal generator of this stochastic order. The following Proposition is a first step in this direction. We show that the sum of $2,[a,b]$-concave functions and functions with a bounded below Arrow-Pratt measure of risk aversion essentially contains the maximal generator of the  $2,[a,b]$-sufficient stochastic order.

Define the set of functions 
\begin{equation*}{AP}_{2,[a,b]} : =\{u \in C^{2}([a,b]): u'(x) \geq 0, \:  u''(x) \leq 0, \:  u'(x) + u''(x)(b -x) \leq 0  \text{ } \; \forall x \in (a,b)  \}.
\end{equation*}
Note that if $u'>0$  and $u \in AP_{2,[a,b]}$, then $- u'' / u' \geq 1/(b-x)$, i.e., the Arrow-Pratt measure of risk aversion of $u$ is bounded below by $1/(b-x)$. For two sets $U$ and $U'$ define $U+U' := \{ u + u': \ u \in U \ u' \in U'\}$. 

\begin{proposition} \label{prop: maximal of sufficient}
  Suppose that $\int_a^b u(x) dF(x) \geq \int_a^b u(x) dG(x)$ for all $u \in AP_{2,[a,b]} + \mathcal{I}_{2,[a,b]}$. Then $F \succeq_{2,[a,b]-S} G$. 
 
\end{proposition}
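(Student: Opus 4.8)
The plan is to reduce the statement to the generators of the $2,[a,b]$-sufficient order and then pass the hypothesis through a limit. By definition, $F\succeq_{2,[a,b]-S}G$ means exactly that $\int_a^b \phi_{c}\,dF\ge\int_a^b \phi_{c}\,dG$ for every $c=(c_1,c_2)\in[a,b]^2$, where $\phi_{c}(x):=-\max\{c_1-x,0\}\max\{c_2-x,0\}$. So it suffices to exhibit each $\phi_c$ as a bounded pointwise limit of functions in $AP_{2,[a,b]}+\mathcal{I}_{2,[a,b]}$: the hypothesis then gives $\int u_k\,dF\ge\int u_k\,dG$ along the approximating sequence, and dominated convergence (both $F$ and $G$ are probability measures on the compact interval $[a,b]$) yields the desired inequality for $\phi_c$. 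By symmetry in $c_1,c_2$ I may assume $c_1\le c_2$; the cases $c_1=a$ (where $\phi_c\equiv 0$) and $c_1=c_2$ (where $\phi_c=-\max\{c_1-x,0\}^2\in\mathcal{I}_{2,[a,b]}$, since $\max\{c_1-x,0\}$ is convex) are immediate, so I focus on $a<c_1<c_2\le b$.

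The key point is that $\phi_c$ already satisfies the three defining inequalities of $AP_{2,[a,b]}$ at every point of $(a,b)$ other than $c_1$. On $(a,c_1)$ one has $\phi_c(x)=-(c_1-x)(c_2-x)$, hence $\phi_c'(x)=(c_1+c_2)-2x\ge c_2-c_1\ge 0$, $\phi_c''(x)=-2\le 0$, and $\phi_c'(x)+\phi_c''(x)(b-x)=(c_1+c_2-2x)-2(b-x)=c_1+c_2-2b\le 0$, the last inequality holding because $c_1\le b$ and $c_2\le b$; on $(c_1,b)$ the function is identically $0$, so the inequalities are trivial. At $x=c_1$ the left derivative $c_2-c_1\ge 0$ exceeds the right derivative $0$, so $\phi_c$ is increasing and concave with only a concave corner there.

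To approximate $\phi_c$ inside $AP_{2,[a,b]}$, I would use that $h(x):=\phi_c'(x)/(b-x)$ is bounded, nonnegative, and weakly decreasing on $(a,b)$ — smoothly decreasing on $(a,c_1)$, identically $0$ on $(c_1,b)$, with one downward jump at $c_1$. Choose $C^1$ nonnegative decreasing functions $h_k$ on $[a,b]$ with $h_k\to h$ in $L^1$ and $\sup_k\|h_k\|_\infty<\infty$ (an elementary mollification), and set $\psi_k(x):=\phi_c(a)+\int_a^x (b-t)\,h_k(t)\,dt$. Then $\psi_k\in C^2([a,b])$ with $\psi_k'=(b-x)h_k\ge 0$, $\psi_k''=-h_k+(b-x)h_k'\le 0$, and $\psi_k'+\psi_k''(b-x)=(b-x)^2h_k'\le 0$, so $\psi_k\in AP_{2,[a,b]}$; moreover $|\psi_k(x)-\phi_c(x)|=\big|\int_a^x (b-t)(h_k(t)-h(t))\,dt\big|\le (b-a)\|h_k-h\|_{L^1}\to 0$ uniformly on $[a,b]$. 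Since $\psi_k\in AP_{2,[a,b]}\subseteq AP_{2,[a,b]}+\mathcal{I}_{2,[a,b]}$, the reduction in the first paragraph completes the argument.

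The main obstacle is the curvature bookkeeping of the second paragraph: one must see that the Arrow–Pratt lower bound $-\phi_c''/\phi_c'\ge 1/(b-x)$ built into $AP_{2,[a,b]}$ is exactly sharp enough — it is met with equality along the diagonal $c_1=c_2=b$, and would fail for any strictly larger support — and then carry out the mollification so that nonnegativity, concavity, the Arrow–Pratt inequality, and $C^2$-regularity up to $a$ and $b$ are all simultaneously preserved. Everything else (the reduction to generators, the degenerate cases, the dominated-convergence step) is routine. If one prefers to keep the $\mathcal{I}_{2,[a,b]}$ summand actively in use, one may instead first subtract from $\phi_c$ an appropriate multiple of $\max\{c_1-x,0\}^2$ (an element of $\mathcal{I}_{2,[a,b]}$) before mollifying the remainder, but this refinement is not logically necessary.
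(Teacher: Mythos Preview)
Your argument is correct, and it is genuinely different from the paper's. The paper first uses Proposition~\ref{prop:4} to reduce the check to the two extremal families $c_1=c_2$ and $c_2=b$; the diagonal case is dispatched because $-\max\{c-x,0\}^2\in\mathcal{I}_{2,[a,b]}$, while for $c_2=b$ the paper performs an integration-by-parts decomposition
\[
\int_a^b \max\{c-x,0\}(b-x)\,dF(x)=\tfrac12\int_a^b\max\{c-x,0\}^2\,dF(x)+\tfrac12\int_a^c F(x)\,dk(x),\qquad k(x)=-(b-x)^2,
\]
and then invokes Meyer's Theorem~2 (SOSD after the monotone change of variables $k$, valid precisely under the Arrow--Pratt bound defining $AP_{2,[a,b]}$). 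Your route bypasses both the reduction and Meyer's theorem: the key observation that $h(x)=\phi_c'(x)/(b-x)$ is nonnegative and decreasing lets you mollify directly into $AP_{2,[a,b]}$, and dominated convergence finishes. This is more elementary and in fact establishes something slightly stronger than the stated proposition: since your approximants lie in $AP_{2,[a,b]}$ alone, the hypothesis restricted to $AP_{2,[a,b]}$ already forces $F\succeq_{2,[a,b]-S}G$, so the $\mathcal{I}_{2,[a,b]}$ summand is not actually needed. The paper's approach, by contrast, makes transparent \emph{how} the two pieces of the sum each account for one of the two integral conditions in Proposition~\ref{prop:4}, and connects the result to the existing literature on Arrow--Pratt bounds.
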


Note that the set $AP_{2,[a,b]} + \mathcal{I}_{2,[a,b]}$ contains the constant functions and is convex as the sum of convex sets. It is clearly also a cone. Thus, the closure of the set $AP_{2,[a,b]} +\mathcal{I}_{2,[a,b]}$ in the weak topology   $cl(AP_{2,[a,b]} +\mathcal{I}_{2,[a,b]})$ contains the maximal generator of the $2,[a,b]$-sufficient stochastic order (see Proposition \ref{Prop: maximal generator}). We summarize this result in the following Corollary. 

\begin{corollary}
    The set  $cl(AP_{2,[a,b]} +\mathcal{I}_{2,[a,b]})$ contains the maximal generator of the $2,[a,b]$-sufficient stochastic order. 
\end{corollary}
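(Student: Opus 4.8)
The plan is to prove the contrapositive-style statement directly: assuming the integral inequality $\int_a^b u\,dF \geq \int_a^b u\,dG$ holds for every $u \in AP_{2,[a,b]} + \mathcal{I}_{2,[a,b]}$, I must deduce that for every pair $(c_1,c_2) \in [a,b]^2$,
\begin{equation*}
\int_a^b \max\{c_1-x,0\}\max\{c_2-x,0\}\,dF(x) \leq \int_a^b \max\{c_1-x,0\}\max\{c_2-x,0\}\,dG(x).
\end{equation*}
Equivalently, setting $v_{c_1,c_2}(x) := -\max\{c_1-x,0\}\max\{c_2-x,0\}$, I need $\int v_{c_1,c_2}\,dF \geq \int v_{c_1,c_2}\,dG$. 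So the entire content of the proposition is the claim that each such $v_{c_1,c_2}$ lies in the closed convex cone generated by $AP_{2,[a,b]} + \mathcal{I}_{2,[a,b]}$ — or more precisely, that $v_{c_1,c_2}$ can be written as a limit (or exact sum) of a function with Arrow–Pratt risk aversion bounded below by $1/(b-x)$ and an increasing $2,[a,b]$-concave function, after adding suitable constants. First I would normalize by WLOG assuming $c_1 \le c_2$, and treat the easy degenerate cases ($c_1 = c_2$, where $v$ is already $-\max\{c-x,0\}^2$, a genuine $2,[a,b]$-concave function up to a constant as noted in the excerpt after Corollary \ref{coro:1}; and $c_1 = a$ or $c_1$ very small, where $v$ is essentially affine or zero).

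The substantive step is the decomposition for $a < c_1 < c_2 \le b$. On $[c_2,b]$ the function $v_{c_1,c_2}$ is identically $0$; on $[c_1,c_2]$ it equals $-(c_1-x)\cdot 0 = 0$ as well since $\max\{c_1-x,0\} = 0$ there — wait, I need to recheck: for $x \in [c_1,c_2]$ we have $c_1 - x \le 0$, so $\max\{c_1-x,0\} = 0$, hence $v = 0$ on $[c_1,b]$; the function is supported on $[a,c_1]$ where it equals $-(c_1-x)(c_2-x)$, a downward-opening-in-the-sense-of-sign product of two decreasing positive affine pieces. So $v_{c_1,c_2}(x) = -(c_1-x)(c_2-x)\mathbf{1}_{[a,c_1]}(x)$. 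I would write $-(c_1-x)(c_2-x) = -(c_1-x)^2 - (c_1-x)(c_2-c_1)$ on $[a,c_1]$, i.e. $v_{c_1,c_2} = -\max\{c_1-x,0\}^2 - (c_2-c_1)\max\{c_1-x,0\}$. The first term is (a constant plus) a $2,[a,b]$-concave function; the second term, $-(c_2-c_1)\max\{c_1-x,0\}$, is increasing, concave, and piecewise affine — I need to exhibit it (up to an additive constant) as an element of $AP_{2,[a,b]} + \mathcal{I}_{2,[a,b]}$, or as a limit of such. A piecewise-affine increasing concave function need not itself have Arrow–Pratt bounded below by $1/(b-x)$, so the idea is to split $-\max\{c_1-x,0\} = $ [something with large curvature concentrated appropriately] $+$ [something $2$-concave], approximating by smooth functions and passing to the limit using closedness under pointwise convergence (Proposition \ref{Prop: maximal generator} and the fact that $AP_{2,[a,b]}+\mathcal{I}_{2,[a,b]}$ is a convex cone with constants, so its maximal generator equals its pointwise closure).

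The main obstacle I anticipate is exactly this last matching: showing that the "linear remainder" $-\max\{c_1-x,0\}$ (and hence $v_{c_1,c_2}$ itself) can be realized, up to constants and limits, inside $AP_{2,[a,b]} + \mathcal{I}_{2,[a,b]}$. The natural route is an approximation argument: take smooth convex decreasing positive functions $w_\varepsilon$ whose risk aversion exceeds $1/(b-x)$ and which converge pointwise to a truncation of $x \mapsto \max\{c_1-x,0\}$-type behavior, and smooth $2,[a,b]$-concave functions $z_\varepsilon$ absorbing the rest, arranged so that $-w_\varepsilon' + z_\varepsilon'$ matches the target derivative in the limit; one must be careful that the $AP$ constraint $u'(x) + u''(x)(b-x) \le 0$ is preserved along the approximation, which is where most of the technical care goes. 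An alternative, possibly cleaner, is to argue via the dual/separation viewpoint: if some $v_{c_1,c_2}$ were not in $cl(AP_{2,[a,b]}+\mathcal{I}_{2,[a,b]})$, a separating signed measure would give distributions $F,G$ satisfying all the stated integral inequalities yet violating $F \succeq_{2,[a,b]-S} G$, contradicting — but that is circular, so the direct constructive decomposition is the honest path, and nailing down the approximation of the piecewise-linear remainder is the crux.
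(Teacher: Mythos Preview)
Your decomposition
\[
v_{c_1,c_2}(x) = -\max\{c_1-x,0\}^2 - (c_2-c_1)\max\{c_1-x,0\}
\]
is correct, and you are right that the first summand lies in $\mathcal{I}_{2,[a,b]}$. The genuine gap is the second summand. You reduce the whole proposition to showing that the increasing piecewise-affine function $-\max\{c_1-x,0\}$ lies in $cl\bigl(AP_{2,[a,b]}+\mathcal{I}_{2,[a,b]}\bigr)$, and then leave that step as an unspecified approximation. This is not a minor technicality: on $(a,c_1)$ the target has derivative identically $1$ and second derivative $0$, so any smooth approximant with bounded-below Arrow--Pratt coefficient $-u''/u' \geq 1/(b-x)$ must have $u'(x)\le u'(a)\,(b-x)/(b-a)$, which forces the $AP$-piece to carry a strictly decreasing derivative; the residual then has to be absorbed by an $\mathcal{I}_{2,[a,b]}$-piece, and you give no construction showing this can be arranged consistently. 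The ``alternative dual/separation'' route you mention is, as you note yourself, circular. So as written the proposal does not close.

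The paper avoids this obstacle entirely. First, by the linearity-in-$c_2$ observation underlying Proposition~\ref{prop:4}, it suffices to check the two extreme cases $c_2=c_1$ and $c_2=b$; the first is exactly the $-\max\{c-x,0\}^2$ case you already handled. For $c_2=b$, instead of trying to place $-\max\{c-x,0\}(b-x)$ inside the set, the paper integrates by parts to obtain
\[
\int_a^b \max\{c-x,0\}(b-x)\,dF(x) \;=\; \tfrac{1}{2}\int_a^b \max\{c-x,0\}^2\,dF(x) \;+\; \tfrac{1}{2}\int_a^{c} F(x)\,dk(x),
\]
where $k(x)=-(b-x)^2$. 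The first integral is controlled by the $\mathcal{I}_{2,[a,b]}$ hypothesis. For the second, since $-k''/k' = 1/(b-x)$, Meyer's (1977) integral characterization of stochastic dominance under an Arrow--Pratt lower bound gives $\int_a^{c} F\,dk \leq \int_a^{c} G\,dk$ directly from the $AP_{2,[a,b]}$ hypothesis. No approximation of $v_{c_1,c_2}$ is needed. If you want to repair your route, the missing ingredient is precisely this Meyer-type characterization; once you have it, the cleanest path is the paper's integration-by-parts identity rather than a pointwise decomposition of $v_{c_1,c_2}$.
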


\section{Appendix: Proofs}

For the rest of the Appendix we let $\mathcal{D}_{\alph}= -\mathcal{I}_{\alph}$. We will call the stochastic order that is generated by $\mathcal{D}_{\alph}$ the $\alpha,[a,b]$-convex  stochastic order. 
That is, for two distribution functions $F$ and $G$, we say that $F$ dominates $G$ in the $\alpha,[a,b]$-convex  stochastic order, denoted by $F \succeq_{\alph-D} G$, if for every $u\in \mathcal{D}_{\alpha ,[a,b]}$ we have
		$$\int_a^b u(x) dF(x)\ge \int_a^b u(x)dG(x).$$
Note that 	$F \succeq_{\alph-D} G$ if and only if $G \succeq_{\alph-I} F$.

We first prove the following Proposition that provide some properties of  $[a,b]$-concave  functions that will be used repeatedly in the proofs of our results.

\begin{proposition} \label{prop:2}
	The following properties hold:
	\begin{enumerate}
		
			\item $\mathcal{I}_{\alpha ,[a,b]}$ is a convex cone and is closed in the pointwise topology.
		\item  Let $\beta>\alpha$, then $\mathcal{I}_{\beta,[a,b]} \subseteq \mathcal{I}_{\alpha ,[a,b]} $. 
		\item If $u\in \mathcal{I}_{\alpha ,[a,b]}$ then for every $c\in \R$, the function $g_c(x):=u(x-c)$ is in $\mathcal{I}_{\alpha,[a+c,b+c]}$.
		\item Consider $u\in \mathcal{I}_{\alpha,[a,b]}$, twice differentiable with a continuous second derivative on $[a,b]$.\footnote{The derivatives at the extreme points $a,b$ are defined by taking the left-side and right-side limits, respectively (see Definition 5.1 \cite{rudin1964principles}).} Then, $u'(b)=0$.
		\item For $\alpha>1$, the set  $ \mathcal{I}_{\alph}$ does not contain linear functions that are not constants.
	\end{enumerate}
\end{proposition}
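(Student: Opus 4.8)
The plan is to prove the five items separately. Items 1--3 and 5 reduce to standard closure properties of convex functions once one sees how $\alpha$-convexity interacts with the relevant operations, and item 4 is the one with real content. For item 1, that $\mathcal I_{\alpha,[a,b]}$ is a cone is immediate: if $v^{1/\alpha}$ is convex then $(\lambda v)^{1/\alpha}=\lambda^{1/\alpha}v^{1/\alpha}$ is convex for $\lambda\ge 0$, and monotonicity and nonnegativity of $u(b)-u(\cdot)$ are preserved under $\lambda\ge 0$ scaling. The only point in item 1 that is not entirely routine is that a sum of two $\alpha$-convex functions is $\alpha$-convex. I would prove this by writing $v_i=f_i^{\alpha}$ with $f_i=v_i^{1/\alpha}\ge 0$ convex, and noting that $(v_1+v_2)^{1/\alpha}=(f_1^{\alpha}+f_2^{\alpha})^{1/\alpha}$ is the composition of $(s,t)\mapsto(s^{\alpha}+t^{\alpha})^{1/\alpha}$ --- which for $\alpha\ge 1$ agrees with the $\ell^{\alpha}$-norm on the positive quadrant, hence is convex and nondecreasing in each coordinate --- with the pair of convex functions $(f_1,f_2)$; equivalently, apply Minkowski's inequality to the convex-combination inequalities for $f_1$ and $f_2$. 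Since $u\mapsto u(b)-u(\cdot)$ is linear, convexity of $\mathcal I_{\alpha,[a,b]}$ follows. Closedness under pointwise convergence then holds because pointwise limits of increasing functions are increasing, of nonnegative functions are nonnegative, and of convex functions are convex (apply this to $u_n(b)-u_n(\cdot)$ and to its $1/\alpha$-th power, using continuity of $t\mapsto t^{1/\alpha}$ on $[0,\infty)$); boundedness of the limit is automatic since an increasing function on $[a,b]$ is trapped between its endpoint values.

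For item 2, if $v^{1/\beta}$ is convex then $v^{1/\alpha}=\bigl(v^{1/\beta}\bigr)^{\beta/\alpha}$ is convex, being the composition of the convex nondecreasing map $t\mapsto t^{\beta/\alpha}$ on $[0,\infty)$ (here $\beta/\alpha>1$) with the nonnegative convex function $v^{1/\beta}$; as the monotonicity requirement on $u$ is identical in both sets, $\mathcal I_{\beta,[a,b]}\subseteq\mathcal I_{\alpha,[a,b]}$. For item 3, note $g_c(b+c)-g_c(x)=u(b)-u(x-c)$, and precomposing the convex nonnegative function $\bigl(u(b)-u(\cdot)\bigr)^{1/\alpha}$ with the affine map $x\mapsto x-c$ preserves convexity and nonnegativity, while $g_c$ is increasing because $u$ is.

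Item 4 is the main obstacle. Fix $u\in\mathcal I_{\alpha,[a,b]}$ twice continuously differentiable and set $v=u(b)-u(\cdot)\ge 0$, so $v(b)=0$ and $g:=v^{1/\alpha}$ is convex and nonnegative on $[a,b]$ with $g(b)=0$; also $u'\ge 0$ on $[a,b]$ since $u$ is increasing. Suppose for contradiction $u'(b)=c>0$. Taylor's theorem at $b$ gives $v(x)=c(b-x)+O\bigl((b-x)^2\bigr)$, so there is $\delta_0>0$ with $v(x)\ge\tfrac{c}{2}(b-x)>0$ on $(b-\delta_0,b)$, whence $g(x)\ge(c/2)^{1/\alpha}(b-x)^{1/\alpha}$ there. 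On the other hand, convexity of $g$ on $[b-\delta_0,b]$ with $g(b)=0$ and $g\ge 0$ forces $g$ below the chord joining $(b-\delta_0,g(b-\delta_0))$ and $(b,0)$, i.e.\ $g(x)\le\tfrac{g(b-\delta_0)}{\delta_0}(b-x)$ on that interval. Dividing both bounds by $b-x>0$ gives $(c/2)^{1/\alpha}(b-x)^{1/\alpha-1}\le g(b-\delta_0)/\delta_0$ for all $x\in(b-\delta_0,b)$; but since $\alpha>1$ the exponent $1/\alpha-1$ is negative, so the left side blows up as $x\to b^-$ while the right side stays fixed --- a contradiction. Hence $u'(b)=0$.

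Item 5 is then an immediate corollary of item 4: a non-constant affine function is twice continuously differentiable with constant nonzero derivative, so by item 4 it cannot belong to $\mathcal I_{\alpha,[a,b]}$ when $\alpha>1$. I expect the technical heart of the proposition to be item 4, and within it the key move is the sandwich: the sharp lower bound $g(x)\ge(c/2)^{1/\alpha}(b-x)^{1/\alpha}$ coming from $u'(b)\ne0$ against the linear chord upper bound forced by convexity of $g$, which are incompatible precisely when $\alpha>1$. This also shows items 4 and 5 genuinely require $\alpha>1$: for $\alpha=1$ the two bounds are compatible, consistent with $u(x)=x$ lying in $\mathcal I_{1,[a,b]}$.
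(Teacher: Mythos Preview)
Your proof is correct. Items 2 and 3 match the paper's argument essentially verbatim, and your handling of item 1 is the same in spirit, except that where the paper cites an external reference for ``sum of $\alpha$-convex functions is $\alpha$-convex,'' you supply a self-contained one-line proof via Minkowski's inequality. That is a genuine improvement in presentation.

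The substantive difference is in item 4. The paper's proof uses the differential characterization of $\alpha$-convexity: it shows that if $u'(b)\neq 0$ then $\lim_{x\to b^-}\frac{(u(x)-u(b))u''(x)}{u'(x)^2}=0$, which for $\alpha>1$ violates the pointwise inequality $\frac{(u(x)-u(b))u''(x)}{u'(x)^2}\ge\frac{\alpha-1}{\alpha}$. Your argument instead works directly with $g=(u(b)-u)^{1/\alpha}$: a first-order Taylor bound forces $g(x)\gtrsim(b-x)^{1/\alpha}$ near $b$, while convexity and $g(b)=0$ force $g$ below a linear chord, and the two are incompatible for $\alpha>1$. This is more elementary (it never invokes the second-derivative characterization, and in fact uses only $C^1$ information about $u$), and it makes transparent exactly where $\alpha>1$ enters. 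The paper's route has the advantage of reusing the differential criterion that is emphasized elsewhere in the text; yours has the advantage of being self-contained and of isolating the geometric obstruction. For item 5 the paper again appeals to the differential characterization directly, whereas you obtain it as an immediate corollary of item 4; both are fine, and your deduction is slightly cleaner. Your closing remark that items 4 and 5 genuinely require $\alpha>1$ (witness $u(x)=x\in\mathcal I_{1,[a,b]}$) is correct and worth noting, since the paper's statement of item 4 does not make this restriction explicit even though its proof needs it.
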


\begin{proof}
We prove the results for $\mathcal{D}_{\alph}$ which immediately implies the results for $-\mathcal{D}_{\alph} = \mathcal{I}_{\alph}$. 

\begin{enumerate}

\item  Consider $u,v \in \mathcal{D}_{\alph}$ and $\lambda >0$. Clearly,  $u+\lambda v$ is decreasing. Notice that, 
$$(u+\lambda v) (x) - (u+\lambda)(b)=u(x)-u(b) + \lambda (v(x)-v(b))\;,$$
hence,  $(u+\lambda v) (x) - (u+\lambda)(b)$ can be written as the sum of two $\alpha$-convex function. The sum of $\alpha$-convex function is $\alpha$-convex (see the online Appendix of \cite{jensen2017distributional}). Hence, $u+\lambda v\in\mathcal{D}_{\alph}$, which shows that $\mathcal{D}_{\alph}$ is a convex cone. 

To show that $\mathcal{D}_{\alph}$ is closed under pointwise convergence consider a sequence $(u_n)$ in $\mathcal{D}_{\alph}$ such that $u_n\to u$ (pointwise). Clearly, $u$ is decreasing. The function $u(x)-u(b)$ is the limit of the $\alpha$-convex functions $u_n(x) -u_n(b)$, and hence, $u(x)-u(b)$ is $\alpha$-convex, (see the online Appendix of \cite{jensen2017distributional}). Thus, $u \in \mathcal{D}_{\alph}$. 

\item Consider $u \in \mathcal{D}_{\beta,[a,b]}$. Then $u$ is decreasing and $f(x):=(u(x)-u(b))^{\frac 1 \beta}$ is convex. Because $\beta>\alpha$, the function $g(x):=x^{\frac \beta \alpha}$ is increasing and convex. Therefore, $g(f(x))$ is convex. We conclude that, $(u(x)-u(b))^{\frac 1 \alpha}$ is convex. Thus, $u\in \mathcal{D}_{\alph}$.

\item Because $u\in  \mathcal{D}_{\alph}$ the function $g_c$ is decreasing on $[a+c,b+c]$. Take $x_1,x_2\in [a+c,b+c]$ and $\lambda\in [0,1]$.	Since $u(x)-u(b)$ is $\alpha$-convex we have 
$$	\bigg(u(\lambda (x_1-c) + (1-\lambda) (x_2-c)) - u(b)\bigg)^{\frac 1 \alpha} \le \lambda \bigg(u(x_1-c) - u(b)\bigg)^{\frac 1 \alpha} + (1-\lambda ) \bigg(u(x_2-c) - u(b)\bigg)^{\frac 1 \alpha}\;.$$
	 Since $ g_c(\lambda x_1 + (1-\lambda x_2))= u(\lambda (x_1-c) + (1-\lambda) (x_2-c))$, $g_c(b+c)= u(b)$, $g_c(x_1)=u(x_1-c)$, and $g_c(x_2)=u(x_2-c)$, we conclude that $g_c(x)-g_c(b+c)$ is $\alpha$-convex. Thus, $g_c\in \mathcal{D}_{\alpha,[a+c,b+c]}$.
	
\item Suppose for the sake of a contradiction that $u'(b)\neq 0$. Because $u'$ is continuous, a $\delta>0$ exists such that $\lim_{x\to b^-}u'(x)^2>\delta$. Notice that 
$$\lim_{x\to b^-} (u(x)-(b))u''(x)=\underbrace{\lim_{x\to b^-} (u(x)-u(b))}_{0}\underbrace{\lim_{x\to b^-} u''(x)}_{u''(b)}=0\;.$$
Thus, 
$$ \lim_{x\to b^-} \frac{(u(x) - u(b))u''(x)}{u'(x)^2} = \frac{\lim_{x\to b^-} (u(x) - u(b))u''(x)}{\lim_{x\to b^-} u'(x)^2} =0\;.$$
Because $u$ is twice differentiable with a continuous second derivative, a $\epsilon>0$ exists such that for $x\in (b-\epsilon,b)$, $\frac{(u(x) - u(b))u''(x)}{u'(x)^2} < \frac{\alpha-1}{\alpha}$. Using the $\alpha$-convex characterization for a twice differentiable function, we conclude that $u(x)-u(b)$ is not $\alpha$-convex. Therefore, $u\notin D_{\alpha,[a,b]}$ which is a contradiction. We conclude that $u'(b)=0$.

\item Let $\alpha>1$. Consider $u$ to be a linear function that is decreasing and not a constant. Notice that $u(x)-u(b)$ is twice-differentiable, and that for every $x\in[a,b]$ $u'(x)<0$ and $u''(x)=0$. We conclude that $\frac{(u(x)-u(b))u''(x)}{u'(x)^2}=0$. Thus, $u(x)-u(b)$ is not $\alpha$-convex, i.e., $u \notin \mathcal{D}_{\alph}$. 
\end{enumerate} 
\end{proof}

\subsection{Proofs of the results in Section \ref{sec:2}}

\begin{proof}[Proof of Example \ref{Example 1}] 
 Let $u \in \mathcal{I}_{\alpha  ,[a ,b]}$, $0 <\lambda  <1$ and $\alpha  \geq 1$. The $\alpha $-convexity of $u\left (b\right ) -u\left (x\right )$ implies 
\begin{align*}\left [u\left (b\right ) -u\left (\lambda a +\left (1 -\lambda \right )b\right )\right ]^{\frac{1}{\alpha }} &  \leq \lambda \left [u\left (b\right ) -u\left (a\right )\right ]^{\frac{1}{\alpha }} +\left (1 -\lambda \right )\left [u\left (b\right ) -u\left (b\right )\right ]^{\frac{1}{\alpha }} \\
 &  \Leftrightarrow \left .u\left (b\right ) -u\left (\lambda a +\left (1 -\lambda \right )b\right )\right . \leq \lambda ^{\alpha }u\left (b\right ) -\lambda ^{\alpha }u\left (a\right ) \\
 &  \Leftrightarrow \lambda ^{\alpha }u\left (a\right ) +\left (1 -\lambda ^{\alpha }\right )u\left (b\right ) \leq u\left (\lambda a +\left (1 -\lambda \right )b\right ) \\
 &  \Leftrightarrow \int _{a}^{b}u\left (x\right )dG(x) \leq \int _{a}^{b}u\left (x\right )dF(x)\end{align*}where $F$ is the distribution function of $Y$ and $G$ is the distribution function of $X$. We conclude that $Y \succeq _{\alpha  ,[a ,b] -I}X$.\footnote{Note that Example \ref{Example 1} implies that when $\alpha$ tends to infinity we have $u(b) \leq u(\lambda a + (1 -\lambda)b$. Hence, $u$ is a constant function.}
\end{proof}

\begin{proof}[Proof of Example \ref{Example 2}] 
Let $u \in \mathcal{I}_{\alpha ,[a ,b]}$, $0 <\lambda  <1$ and $\alpha  \geq 1$. From Example \ref{Example 1} we have 
\begin{equation*}u\left (x_{i}\right ) \geq \lambda _{i}^{\alpha }u(a) +\left (1 -\lambda _{i}^{\alpha }\right )u(b)
\end{equation*} 
for all $0 <\lambda  <1$. Multiplying each side of the last inequality by $p_{i}$ for $i =1 ,\ldots  ,n$ and summing the inequalities yield
\begin{align*}\sum _{i =1}^{n}p_{i}u\left (x_{i}\right ) &  \geq \sum _{i =1}^{n}\left (p_{i}\lambda _{i}^{\alpha }u(a) +p_{i}u(b) -p_{i}\lambda _{i}^{a}u\left (b\right )\right ) \\
 &  \Leftrightarrow \sum _{i =1}^{n}p_{i}u\left (x_{i}\right ) \geq \sum _{i =1}^{n}p_{i}\lambda _{i}^{\alpha }u\left (a\right ) +\left (1 -\sum _{i =1}^{n}p_{i}\lambda _{i}^{\alpha }\right )u\left (b\right ) \\
 &  \Leftrightarrow \int _{a}^{b}u(x)dF(x) \geq \int _{a}^{b}u\left (x\right )dG(x)\end{align*} where $F$ is the distribution function of $Y$ and $G$ is the distribution function of $X$. We conclude that $Y \succeq _{\alpha  ,[a ,b] -I}X$.
\end{proof}

\begin{proof}[Proof of Example \ref{Example 3}] 
From Example \ref{Example 1}, for any $u \in \mathcal{I}_{\alpha  ,[a ,b]}$ and $\alpha  \geq 1$ we have 
\begin{equation*}u\left (\lambda a +\left (1 -\lambda \right )b\right ) \geq \lambda ^{\alpha }u(a) +\left (1 -\lambda ^{\alpha }\right )u(b)
\end{equation*} 
for all $0 <\lambda  <1$. Integrating both sides yields 
\begin{align*}\int _{0}^{1}u\left (\lambda a +\left (1 -\lambda \right )b\right )d\lambda  &  \geq u(a)\int _{0}^{1}\lambda ^{\alpha }d\lambda  +u\left (b\right )\int _{0}^{1}\left (1 -\lambda ^{\alpha }\right )d\lambda  \\
 &  \Leftrightarrow \frac{1}{b -a}\int _{a}^{b}u(x)dx \geq \frac{1}{\alpha  +1}u(a) +\frac{\alpha }{a +1}u(b) \\
 &  \Leftrightarrow \int _{a}^{b}u(x)dF(x) \geq \int _{a}^{b}u\left (x\right )dG(x)\end{align*}
 where $F$ is the distribution function of $Y$ and $G$ is the distribution function of $X$. We conclude that $Y \succeq _{\alpha  ,[a ,b] -I}X$.
\end{proof}

\begin{proof}[Proof of Proposition~\ref{prop:2.5}]

1. Suppose that $F \succeq_{\alph-D}G$ and that $u\in \mathcal{D}_{\beta,[a,b]}$. Because $\beta>\alpha$, Proposition~\ref{prop:2} implies that $u\in \mathcal{D}_{\alph} $. Hence, $\E_F[u]\ge \E_G[u]$. Given that $u$ is an arbitrary function that belongs to the set  $\mathcal{D}_{\beta,[a,b]}$, we conclude that $F \succeq_{\beta,[a,b]-D}G$. 
		
2.  Consider $X \succeq_{\alph-D}Y$ and $u\in \mathcal{D}_{\alpha,[a+c,b+c]}$. Suppose that the distributions of $X$ and $Y$ are $F$ and $G$, respectively. From Proposition~\ref{prop:2} we have that $g_c(x):=u(x+c)$ belongs to the set $\mathcal{D}_{\alph} $. Hence, 
		\begin{align*}\int_a^b g_c(x)dF(x)\ge \int_a^b g_c(x)dG(x) &\iff \int_a^b u(x+c)dF(x)\ge \int_a^b u(x+c) dG(x)\\
		&\iff \int_{a+c}^{b+c}u(z)dF(z-c)\ge \int_{a+c}^{b+c}u(z)dG(z-c)\;.		
		\end{align*}
	The last equivalence comes from using the change of variables $z=x+c$. We conclude that $X+c\succeq_{\alpha,[a+c,b+c]-D}Y+c$.
		
3.  Let $b' > b$. Assume that $F\succeq_{ \alpha,[a,b']-D}G$ and $u\in \mathcal{D}_{\alph}$. We extend $u$ to the domain $[a,b']$ as follows:
		$$\hat u (x)=\begin{cases}
		u(x) &\mbox{ if } x\in[a,b]\\
		u(b)  &\mbox{ if } x\in[b,b']
		\end{cases} \;.$$
		We assert that $\hat u \in \mathcal{D}_{\alpha,[a,b']}$. Clearly, $\hat u$ is decreasing, it remains to prove that $\hat u(x)-\hat u (b')$ is $\alpha$-convex. For this extent, we claim that for $ x_1,x_2\in[a,b']$ and $\lambda\in[0,1]$ the following inequality holds:
		\begin{equation} \label{inequality: max u}
		 \bigg(\hat u(\lambda x_1 + (1-\lambda) x_2) - \hat u(b')\bigg)^{\frac 1 \alpha} \le \lambda \bigg(\hat u(x_1) - \hat u(b')\bigg)^{\frac 1 \alpha} + (1-\lambda ) \bigg(\hat u(x_2) - \hat u(b')\bigg)^{\frac 1 \alpha}\;.
		\end{equation}	
We prove this by separating our analysis in three cases:

(i) For $x_1,x_2 \in [a,b]$, we have that $\hat u(x_1)=u(x_1)$, $\hat u(x_2)=u(x_2)$, $\hat u(\lambda x_1 + (1-\lambda) x_2)= u(\lambda x_1 + (1-\lambda) x_2)$, and  $\hat u(b')= u(b)$. Thus, because $u(x)-u(b)$ is $\alpha$-convex inequality (\ref{inequality: max u}) holds. 
(ii) For $x_1,x_2\in [b,b']$, we have that  $\hat u(x_1)=\hat u(b')$, $\hat u(x_2)=\hat u(b')$, $\hat u(\lambda x_1 + (1-\lambda) x_2)= \hat u(b')$, and therefore, inequality (\ref{inequality: max u}) holds. 
(iii) The last case is when $x_1\in [a,b]$ and $x_2\in (b,b']$ (or analogously, when $x_1\in(b,b']$ and $x_2\in[a,b]$). Because $x_1 \in [a,b]$, from the first case we have that 	$$	\bigg(\hat u(\lambda x_1 + (1-\lambda) b) - \hat u(b')\bigg)^{\frac 1 \alpha} \le \lambda \bigg(\hat u(x_1) - \hat u(b')\bigg)^{\frac 1 \alpha} + (1-\lambda ) \bigg(\hat u(b) - \hat u(b')\bigg)^{\frac 1 \alpha}\;.$$
Because $\hat u$ is decreasing we have that $\hat u(\lambda x_1 + (1-\lambda) b) - \hat u(b')\ge \hat u(\lambda x_1 + (1-\lambda) x_2) - \hat u(b')$. We also have that $\hat u(b)= \hat u (x_2)$. Thus,  
	$$	\bigg(\hat u(\lambda x_1 + (1-\lambda)  x_2) - \hat u(b')\bigg)^{\frac 1 \alpha} \le \lambda \bigg(\hat u(x_1) - \hat u(b')\bigg)^{\frac 1 \alpha} + (1-\lambda ) \bigg(\hat u( x_2) - \hat u(b')\bigg)^{\frac 1 \alpha}\;.$$
	Which proves that inequality (\ref{inequality: max u}) holds.

Because $\hat u \in \mathcal{D}_{\alpha,[a,b']}$ and $F \succeq_{\alpha,[a,b']-D}G$, we have that $\int_a^{b'}\hat u (x)dF(x)\ge \int_a^{b'}\hat u (x)dG(x)$. Since $F$ and $G$ are distributions with support contained on $[a,b]$, we have that  $\int_a^{b'}\hat u (x)dF(x)= \int_a^{b}\hat u (x)dF(x)=\int_a^{b} u (x)dF(x)$ and $\int_a^{b'}\hat u (x)dG(x)= \int_a^{b}\hat u (x)dG(x)=\int_a^{b} u (x)dG(x)$. Therefore, for any $u\in\mathcal{D}_{\alph}$, we have $\E_F[u]\ge \E_G[u]$. We conclude that if $F \succeq_{\alpha,[a,b']-D} G$ then $F\succeq_{\alph-D} G  $.
\end{proof}

\begin{proof}[Proof of Proposition~\ref{prop:3}]
    From Proposition~\ref{prop:2.5} part 1 we have that if the result holds for $\alpha$ integer then it holds for every $\alpha \leq n$. Thus, in what follows we consider $\alpha= n$ for a general $n\in \mathbb{N}$.
    
    Let $u\in \mathcal{D}_{n,[a,b]}$ with $u(b)=0$. Then ${u}^{\frac{1}{n }}$ is convex. Thus, from Theorem~\ref{theo:approx} (see below), we have that $u^{\frac{1}{n }}$ may be approximated by the functions $\{c :\max \{c -x ,0\}\}$, in the sense that there exists a sequence of functions $\{u_{m}\}_{m}$ such that  \begin{equation*}u_{m}(x) =\sum \limits _{j =1}^{m}\gamma _{jm}\max \{c_{jm} -x ,0\}
	\end{equation*}
	and $u_{m}$ converges uniformly to $ u^{\frac{1}{n }}$ for some constants $\gamma _{jm} \geq 0$, $c_{jm} \in [a,b]$. We have 
	\begin{align*}\int _{a}^{b}(u_{m}(x))^{n }dG(x)
	&  =\int _{a}^{b}\left (\sum \limits _{j =1}^{m}\gamma _{jm}\max \{c_{jm} -x ,0\}\right )^{n }dG(x)\\
	&  =\int _{a}^{b}\sum _{k_{1} + . . . +k_{m} =n }\frac{n  !}{\prod \limits _{j =1}^{m}k_{j} !}\;\prod \limits _{j =1}^{m}\gamma _{jm}^{k_{j}}\max \{c_{jm} -x ,0\}^{k_{j}}dG(x)\\
	&  \leq \int _{a}^{b}\sum _{k_{1} + . . . +k_{m} =n }\frac{n  !}{\prod \limits _{j =1}^{m}k_{j} !}\;\prod \limits _{j =1}^{m}\gamma _{jm}^{k_{j}}\max \{c_{jm} -x ,0\}^{k_{j}}dF(x)%
	&  =\int _{a}^{b}(u_{m}(x))^{n }dF(x).
	\end{align*}
	The second equality follows from the multinomial theorem. The inequality follows from the fact that $F \succeq _{n,[a,b]-S}G$.  Applying the dominated convergence theorem yields
	\begin{equation*}\int _{a}^{b}u(x)dF(x) \geq \int _{a}^{b}u(x)dG(x), 
	\end{equation*} for every $u\in D_{n,[a,b]}$ with $u(b)=0$. 
	To complete the proof, take an arbitrary function $v \in D_{n,[a,b]}$.  Then $u(x):=v(x)-v(b)$ belongs to the set $D_{n,[a,b]}$ and satisfies  $u(b)=0$. Thus,
	$$ \int _{a}^{b}(v(x)-v(b))dF(x) \geq \int _{a}^{b}(v(x)-v(b))dG(x) \iff \int _{a}^{b}v(x)dF(x) \geq \int _{a}^{b}v(x)dG(x)\;,$$
	which completes the proof.
\end{proof}

We now provide a proof of a well-known result in the literature about approximation of convex and decreasing functions. 
\begin{theorem}\label{theo:approx}
	Let $u:[a,b]\to \R$ a continuous convex and decreasing function such that $u(b)=0$. Then, there is a sequence ($u _n$) of the form $u_{n}(x) =\sum \limits _{j =1}^{n}\gamma _{j}\max \{c_{j} -x ,0\}$ for some $\gamma_{j}\ge 0$ and $c_j\in [a,b]$, such that $u_m$ converges uniformly to $u$.
\end{theorem}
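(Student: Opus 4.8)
The plan is to split the argument into two independent steps: first, approximate $u$ uniformly by piecewise linear convex decreasing functions that vanish at $b$; second, show that every such piecewise linear function is \emph{exactly} a finite nonnegative combination of the ``hockey stick'' functions $x\mapsto\max\{c-x,0\}$ with $c\in[a,b]$. (One could instead build the approximants from supporting lines of the epigraph, but the piecewise linear route is the most self-contained.) At the outset it is worth recording that $u\ge 0$ on $[a,b]$, since $u$ is decreasing with $u(b)=0$; this is consistent with the approximants being nonnegative combinations of nonnegative functions.

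For the first step I would fix a uniform partition $a=x_0<x_1<\dots<x_n=b$ of mesh $\delta_n=(b-a)/n$ and let $u_n$ be the function that is affine on each $[x_i,x_{i+1}]$ and agrees with $u$ at the nodes. Convexity of $u$ makes the consecutive chord slopes nondecreasing, so $u_n$ is convex; since $u$ is decreasing, every chord slope is $\le 0$, so $u_n$ is decreasing; and $u_n(b)=u(b)=0$. Uniform convergence follows from uniform continuity of $u$ on the compact interval $[a,b]$: for $x\in[x_i,x_{i+1}]$ one has $u_n(x)=\lambda u(x_i)+(1-\lambda)u(x_{i+1})$ for some $\lambda\in[0,1]$, hence $|u_n(x)-u(x)|\le\lambda|u(x_i)-u(x)|+(1-\lambda)|u(x_{i+1})-u(x)|\le\omega_u(\delta_n)\to 0$, where $\omega_u$ is the modulus of continuity of $u$.

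For the second step, fix one such $u_n$ and let $p:=u_n'$ denote its right derivative, a nondecreasing right-continuous step function on $[a,b)$ with all values $\le 0$. Then $-p$ is a nonincreasing, nonnegative step function, so writing its values as $w_0\ge w_1\ge\dots\ge w_{r-1}>0$ on successive intervals with breakpoints $a<t_1<\dots<t_{r-1}<t_r=b$ (and setting $w_r=0$), we get $-p(t)=\sum_{j=1}^{r}\gamma_j\,\mathbf 1_{[a,t_j)}(t)$ with $\gamma_j:=w_{j-1}-w_j\ge 0$ and $t_j\in(a,b]$; a term with $t_j=b$ occurs exactly when $u_n$ has nonzero slope on its last piece, which is harmless since $b\in[a,b]$ (it contributes the linear function $b-x$). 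Using $u_n(b)=0$ and the fundamental theorem of calculus, for every $x\in[a,b]$,
$$u_n(x)=u_n(b)-\int_x^b p(t)\,dt=\int_x^b(-p(t))\,dt=\sum_{j=1}^{r}\gamma_j\int_x^b\mathbf 1_{[a,t_j)}(t)\,dt=\sum_{j=1}^{r}\gamma_j\max\{t_j-x,0\},$$
since $\int_x^b\mathbf 1_{[a,t_j)}(t)\,dt$ is the length of $[x,b]\cap[a,t_j)=[x,t_j)$, namely $\max\{t_j-x,0\}$ (using $x\ge a$ and $t_j\le b$). Dropping indices with $\gamma_j=0$ exhibits $u_n$ in the required form, and combining with the first step produces a sequence of functions of the stated form converging uniformly to $u$.

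I expect the only delicate points to be bookkeeping rather than real obstacles: verifying that chordal interpolation simultaneously preserves convexity and monotonicity, and carrying out the decomposition of $-p$ into indicator functions with nonnegative weights and with every breakpoint $t_j$ inside $[a,b]$ — in particular allowing $t_j=b$ so that a nonzero terminal slope of $u_n$ is captured by the admissible function $x\mapsto\max\{b-x,0\}=b-x$.
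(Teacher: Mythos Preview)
Your proof is correct and follows essentially the same route as the paper: piecewise-linear interpolation on a uniform mesh plus a hockey-stick decomposition of the interpolant, with uniform convergence coming from uniform continuity. The only cosmetic difference is in Step~2: the paper computes the coefficients directly as (scaled) second differences $\gamma_i=\tfrac{1}{c_i-c_{i+1}}\big((u(c_{i+1})-u(c_i))-(u(c_i)-u(c_{i-1}))\big)$ and verifies by a telescoping sum that $\hat u_n(c_k)=u(c_k)$ at every node, whereas you obtain the same coefficients by writing the right derivative $-u_n'$ as a nonnegative combination of indicators and integrating; the two computations are equivalent. One tiny bookkeeping point: your setup ``$w_{r-1}>0$ and $t_r=b$'' tacitly assumes $u_n$ has nonzero slope on its last piece, but if $-p$ vanishes on a terminal subinterval you simply take $t_r$ to be the first point where it vanishes (or allow $w_{r-1}=0$ and drop the resulting zero term), and the argument goes through unchanged.
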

\begin{corollary}
	Every decreasing convex function can be approximated by decreasing continuous and convex functions.
\end{corollary}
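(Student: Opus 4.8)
The plan is to approximate $u$ by continuous piecewise-linear functions that are themselves convex, decreasing, and vanish at $b$ — taking the piecewise-linear interpolants of $u$ on finer and finer grids — and then to observe that every such piecewise-linear function is \emph{literally} a finite nonnegative combination $\sum_j \gamma_j\max\{c_j-x,0\}$. I would prefer this route to the alternative of writing $u(x)=\int_{[a,b]}\max\{c-x,0\}\,d\nu(c)$ via the second-derivative measure of $u$: that representation does hold, but $\nu$ has total mass $-u'_+(a)$, which can be $+\infty$ (e.g.\ $u(x)=\sqrt b-\sqrt x$), so passing from the integral to finite sums would need an extra truncation argument that the interpolation approach sidesteps.

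For the interpolation step, fix a partition $a=t_0<t_1<\cdots<t_m=b$ and let $u_P$ be the continuous function that agrees with $u$ at each $t_i$ and is affine on each $[t_{i-1},t_i]$. Then $u_P$ is convex because its consecutive slopes $s_i=(u(t_i)-u(t_{i-1}))/(t_i-t_{i-1})$ are nondecreasing in $i$ (difference quotients of the convex function $u$ are monotone); $u_P$ is decreasing because each $s_i\le 0$; and $u_P(b)=u(b)=0$. Next, I would prove the structural fact that any continuous, convex, decreasing, piecewise-linear $\phi$ on $[a,b]$ with $\phi(b)=0$ has the form $\sum_j\gamma_j\max\{c_j-x,0\}$ with $\gamma_j\ge 0$ and $c_j\in[a,b]$: let $c_1<\cdots<c_k$ be the interior breakpoints of $\phi$ and $\gamma_j>0$ the upward jump of $\phi'$ at $c_j$ (upward, by convexity); then $\psi:=\sum_{j=1}^k\gamma_j\max\{c_j-x,0\}$ has exactly the same jumps in its derivative, so $\phi-\psi$ is affine, and since $\phi(b)=\psi(b)=0$ we get $\phi(x)-\psi(x)=\phi'(b^-)(x-b)$ with $\phi'(b^-)\le 0$; hence $\phi=\psi+\gamma_{k+1}\max\{b-x,0\}$ on $[a,b]$ with $\gamma_{k+1}:=-\phi'(b^-)\ge 0$ and $c_{k+1}:=b$. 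Applying this to $\phi=u_P$ puts the interpolant in the required form.

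Finally, I would control the error: for $x\in[t_{i-1},t_i]$ both $u$ and $u_P$ lie between $u(t_{i-1})$ and $u(t_i)$ (each is monotone there), so $\|u_P-u\|_\infty\le\max_i|u(t_i)-u(t_{i-1})|\le\omega_u(\operatorname{mesh}(P))$, where $\omega_u$ is the modulus of continuity of $u$; since $u$ is continuous on a compact interval, $\omega_u(\delta)\to 0$ as $\delta\to 0$. Choosing partitions with mesh tending to $0$ and letting $u_n$ denote the associated combinations yields $u_n\to u$ uniformly, which proves the theorem; the corollary then follows at once (apply the theorem to $v-v(b)$ for a decreasing convex $v$).

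I do not expect a genuine obstacle here — the result is classical. The only points requiring care are the bookkeeping in the structural step, specifically remembering the leftover affine term and absorbing it into the extra atom at $c=b$, and making sure the convergence is uniform rather than merely pointwise (which is exactly what the monotonicity-based error bound above delivers).
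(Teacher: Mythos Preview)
Your proposal is correct and follows essentially the same route as the paper: construct the piecewise-linear interpolant of $u$ on a fine grid, represent it as a nonnegative combination $\sum_j\gamma_j\max\{c_j-x,0\}$, and bound the error via uniform continuity. The only difference is cosmetic --- the paper writes down explicit formulas for the coefficients $\gamma_j$ on a uniform partition and verifies by a telescoping computation that the resulting sum interpolates $u$ at the grid points, whereas you package that step as a separate structural lemma about convex decreasing piecewise-linear functions (including the same absorption of the residual affine piece into an atom at $c=b$).
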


\begin{proof}
	The proof is by construction and is based on the paper \cite{russell1989representative}. 
	
	Consider a partition of the interval of the interval $[a,b]$, $P_n=[c_n,c_{n-1},\ldots,c_0]$ such that $c_i=b-\frac i n (b-a)$ for $i=1,\ldots, n$. 
	For $i=0,\ldots,n-1$ we define
	\begin{align*}
	c_{-1}&=b\;\\
	\beta_{i}&=u(c_i)-u(c_{i-1})\\
	\gamma_{i}&=\frac 1 {c_{i}-c_{i+1}} (\beta_{(i+1)}-\beta_{i})\;.
	\end{align*}
	
	Because $c_{i-1}$ is the average point between $c_{i},c_{i-2}$, by convexity of $u$ we have that 
	$$u(c_{i})+u(c_{i-2})\ge 2 u(c_{i-1})\;,$$
	which implies that $\beta_{i}\ge \beta_{(i-1)}$ and $\gamma_i\ge 0$.
	
	Also,
	\begin{align*}
	\sum_{j=0}^i \gamma_{j}(c_j-c_{i+1})&=\sum_{j=0}^i \frac{c_{j}-c_{i+1}}{c_{j}-c_{j+1}} (\beta_{(j+1)}-\beta_{j})\\
	&=\sum_{j=0}^i (i+1-j)(\beta_{(j+1)}-\beta_{j})=-(i+1)\beta_{0}+\beta_{1}+\beta_{2}+\ldots+\beta_{(i+1)}\\
	&=\beta_{1}+\ldots+\beta_{(i+1)}=u(c_{i+1})-u({c_0})
	\end{align*}
	Because $u(c_0)=u(b)=0$, we get that
	\begin{equation}\label{a2}
	u(c_{i+1})=\sum_{j=0}^i \gamma_{j}(c_{j}-c_{i+1}) \mbox{ for every } i=0,1,\ldots, n-1\;.
	\end{equation}
	
	Define $\hat u_n(x):= \sum_{j=0}^{n-1} \gamma_{j}\max\{c_{j}-x,0\}$.  We claim that for every $\epsilon>0$ there is a sufficiently large $n$ such that for every $x\in[a,b]$ we have $|u(x)-\hat u_n(x)|<\epsilon$. Indeed, consider $x\in[a,b]$, there is $0\le k\le n-1$ such that $x\in [c_{k+1},c_k]$.
	Because $\hat u _n$ is decreasing ($\gamma_{j}$ are nonnegative), we have $\hat u _n(c_{k})\le \hat u _n(x)\le \hat u _n(c_{k+1})$. Now,
	$$\hat u _n(c_{k})= \sum_{j=0}^{n-1} \gamma_{j}\max\{c_{j}-c_{k},0\} = \sum_{j=0}^{k-1} \gamma_{j}(c_{j}-c_{k}) = u(c_k)\;,$$
	where the second equality comes from Equation~\eqref{a2}. The same argument implies that $\hat u _n(c_{k+1})=u(c_{k+1})$. Hence, for every $k=0,1,\ldots, n-1$ we have that
	\begin{equation}\label{eq:apv}
	u(c_{k})\le \hat u _n(x)\le  u(c_{k+1}) \mbox{ for every } x\in [c_{k+1},c_k]\;.\end{equation} 
	
	Because $u$ is continuous on $[a,b]$, $u$ is uniformly continuous. Thus, there is a sufficiently high $n$ such that $|u(c_{k+1}) - u(c_k)|\le \epsilon$. Second, because $u$ is decreasing we have that  $ u (c_{k})\le u (x)\le  u (c_{k+1})$. Using these two facts on inequality~\eqref{eq:apv} allow us to conclude that
	$$|u(x)-\hat u_n(x)|\le \epsilon \mbox{ for every }x\in[a,b]\;.$$
\end{proof}

\begin{proof}[Proof of Proposition~\ref{prop:4}]
From Lemma~\ref{lemma:0} (see below), we have that for  $c_1,c_2\in [a,b]$ with  $c_2\ge c_1$ the expression
	$$ \int_a^{b}  \max\{c_1-x,0\}\max\{c_2-x,0\} dF(x) - \int_a^{b}  \max\{c_1-x,0\}\max\{c_2-x,0\} dG(x) $$ is equal to
	$$(c_2-c_1)\bigg[\int_0^{c_1} F(x) dx -\int_0^{c_1} G(x) dx  \bigg] + 2\int_0^{c_1} \bigg(\int_0^x F(z)dz-\int_0^x G(z)dz\bigg)dx \ge 0\;.$$
	
	Because the above inequality is linear in $c_2$, we have that it holds for every $c_2\in[c_1,b]$ if and only if it holds for $c_2=b$ and for $c_2=c_1$. Evaluating it at these two points we obtain the first and the second inequalities of Proposition~\ref{prop:4}, respectively.
\end{proof}

\begin{lemma}\label{lemma:0}
	Consider a distribution $F$ on $[a,b]$. For every $c_1\le c_2$ in $[a,b]$ we have that 
	$$\int_a^{b}  \max\{c_1-x,0\}\max\{c_2-x,0\} dF(x)= (c_2-c_1)\int_a^{c_1} F(x)dx + 2 \int_a^{c_1}\int_a^{x} F(z)dzdx\;.$$
\end{lemma}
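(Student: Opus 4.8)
The plan is to reduce the integral to the interval $[a,c_1]$, split the integrand by an elementary algebraic identity, and then evaluate the two resulting pieces by Stieltjes integration by parts together with one application of Fubini's theorem.

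First I would observe that since $c_1\le c_2$, the product $\max\{c_1-x,0\}\max\{c_2-x,0\}$ vanishes for $x\ge c_1$, while for $x<c_1$ both factors are positive and it equals $(c_1-x)(c_2-x)$; hence the left-hand side equals $\int_{[a,c_1]}(c_1-x)(c_2-x)\,dF(x)$. Then I would use the identity $(c_1-x)(c_2-x)=(c_2-c_1)(c_1-x)+(c_1-x)^2$ to rewrite this as $(c_2-c_1)\int_{[a,c_1]}(c_1-x)\,dF(x)+\int_{[a,c_1]}(c_1-x)^2\,dF(x)$.

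Next I would evaluate each piece by Stieltjes integration by parts, using that $F$ is a cumulative distribution function on $[a,b]$ so that $F(a^-)=0$ and the boundary term at $a$ vanishes. Applying $\int_{[a,c_1]} g\,dF = g(c_1)F(c_1)-g(a)F(a^-)-\int_a^{c_1}F(x)g'(x)\,dx$ with $g(x)=c_1-x$ gives $\int_{[a,c_1]}(c_1-x)\,dF(x)=\int_a^{c_1}F(x)\,dx$, and with $g(x)=(c_1-x)^2$ gives $\int_{[a,c_1]}(c_1-x)^2\,dF(x)=2\int_a^{c_1}(c_1-x)F(x)\,dx$. Finally, Fubini's theorem on the triangle $\{a\le z\le x\le c_1\}$ turns $2\int_a^{c_1}(c_1-x)F(x)\,dx$ into $2\int_a^{c_1}F(z)(c_1-z)\,dz=2\int_a^{c_1}\int_a^x F(z)\,dz\,dx$. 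Substituting the three evaluations back into the split expression produces exactly the claimed formula $(c_2-c_1)\int_a^{c_1}F(x)\,dx+2\int_a^{c_1}\int_a^x F(z)\,dz\,dx$.

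I do not expect a genuine obstacle here; the only point requiring care is the boundary term at $x=a$ in the integration by parts when $F$ has an atom there, which is harmless because $F(a^-)=0$ forces that term to vanish and the atom is automatically included in $\int_{[a,c_1]}$. If one prefers to avoid Stieltjes integration by parts altogether, an equivalent route is to write $c_1-x=\int_a^{c_1}\mathbf 1\{x<t\}\,dt$ and $(c_1-x)^2=2\int_a^{c_1}\int_a^t\mathbf 1\{x<s\}\,ds\,dt$ for $x\le c_1$ and swap the order of integration via Tonelli's theorem, which yields the same two identities directly.
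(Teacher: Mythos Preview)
Your proof is correct, and it differs from the paper's in the choice of algebraic decomposition. The paper writes $(c_1-x)(c_2-x)=c_2(c_1-x)-x(c_1-x)$ and then has to handle the term $\int_a^{c_1}x(c_1-x)\,dF(x)$ via a somewhat indirect route: it introduces the auxiliary function $v(x)=\int_a^x(c_1-z)\,dF(z)$, integrates by parts against $u(x)=x$, and then needs a further integration by parts on $\int_a^{c_1}xF(x)\,dx$ before recombining---three applications of integration by parts in all. Your split $(c_1-x)(c_2-x)=(c_2-c_1)(c_1-x)+(c_1-x)^2$ is more direct: each piece already matches one summand of the target formula, so two integrations by parts and one Fubini swap finish the job. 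What your route buys is brevity and transparency (the coefficient $(c_2-c_1)$ appears immediately rather than as the difference $c_2-c_1$ of two separate computations); what the paper's route illustrates is that the result can also be obtained without ever writing $(c_1-x)^2$, which is perhaps closer in spirit to how one would proceed for more general products. Your remark about the boundary atom at $a$ is well placed and applies equally to the paper's argument.
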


\begin{proof}
	Because $c_1\le c_2$ we have that
	\begin{align}
	\int_a^{b}  \max\{c_1-x,0\}\max\{c_2-x,0\} dF(x) &= \int_a^{c_1}(c_1-x)(c_2-x)dF(x)\\
	&=c_2\int_a^{c_1}(c_1-x)dF(x)-\int_a^{c_1} x(c_1-x)dF(x)\;. \label{eq:0}
	\end{align}
	Using integration by parts for Lebesgue-Stieltjes integrals, we have that
	\begin{equation}\label{eq:aux1}
	\int_a^{c_1}(c_1-x)dF(x)= (c_1-x)F(x)\Big|_{a^-}^{c_1^+}+ \int_a^{c_1} F(x)dx = \int_a^{c_1} F(x)dx \;,
	\end{equation}
	where the second equality comes from $F(a^-)=0$.
	
	To tackle the second term in Equation~\eqref{eq:0}, define $ v(x):=\int_a^{x} (c_1-z)dF(z)$ for $x\in [a,c_1]$. Using integration by parts and the fact that $F(a^-)=0$, we have that $v(x)=(c_1-x)F(x)+\int_a^{x}F(z)dz$. Define $u(x)=x$. We have that $\int_a^{c_1} x(c_1-x)dF(x)=\int_a^{c_1}u(x) dv(x)$. Using integration by parts and the fact that $v(a^-)=0$, we obtain
	$$\int_a^{c_1} x(c_1-x)dF(x)=\int_a^{c_1} x F(x) dx - \int_a^{c_1}\int_a^{x} F(z)dzdx\;.$$
	
	Once again, using integration by parts, we have that $\int_a^{c_1} x F(x) dx =c_1\int_a^{c_1}F(x)dx-\int_a^{c_1}\int_a^{x} F(z)dzdx$.
	Thus,
	\begin{equation}\label{eq:aux2}
	\int_a^{c_1} x(c_1-x)dF(x)=c_1\int_a^{c_1}F(x)dx- 2 \int_a^{c_1}\int_a^{x} F(z)dzdx\;.
	\end{equation}
	
	Therefore, plugging \eqref{eq:aux1} and \eqref{eq:aux2} into Equation~\eqref{eq:0} we get that
	\begin{equation*}
	 \int_a^{c_1}(c_1-x)(c_2-x)dF(x) = (c_2-c_1)\int_a^{c_1} F(x)dx + 2 \int_a^{c_1}\int_a^{x} F(z)dzdx\;.
	 \end{equation*}
	\end{proof}

\subsection{Proofs of Section \ref{sec:3}}

\begin{proof}[Proof of Proposition~\ref{prop:consumption-savings}]
We prove only part (i) (using Proposition \ref{prop:2.5} the proof of part (ii) is identical to the proof of part (i)). 

Define the function $g_{s} :[0 ,\overline{y}] \rightarrow \mathbb{R}_{ +}$ by  $g_{s}(y) : =u^{ \prime }(Rs +y)$ for all $0 \leq s \leq x$. First note that $g_{s}(y)$ is a $2 ,[0 ,Rx +\overline{y} -Rs]$-convex function.   
 To see this, note that \begin{equation*}g_{s}(y) -g_{s}(Rx +\overline{y} -Rs) =u^{ \prime }\left (Rs +y\right ) -u^{ \prime }(Rx +\overline{y})
\end{equation*} 
is $2$-convex because $u^{ \prime }$ is $2 ,[0 ,Rx +\overline{y}]$-convex and $0 \leq Rs +y \leq Rx +\overline{y}$ for $0 \leq y \leq Rx -Rs +\bar{y}$.

From Lemma \ref{Lemma: suff prop} (see below), $F  \succeq _{2 ,[0 ,Rx +\overline{y}] -S} G$ implies that $F \succeq _{2 ,[0 ,Rx -Rs +\overline{y}] -S}G$ for all $s \in [0 ,x]$.  Let $h_{s} (s ,q)$ be the derivative of $h$ with respect to $s$. Let $s \in [0 ,x]$. We have 
\begin{align*}h_{s}(s ,F) &  = -u^{ \prime }\left (x -s\right ) +\beta \int _{0}^{\overline{y}}u^{ \prime }(Rs +y)dF(y) \\
 &  = -u^{ \prime }(x -s) +\int _{0}^{\overline{y}}g_{s}\left (y\right )dF(y) \\
 &  \leq  -u^{ \prime }\left (x -s\right ) +\int _{0}^{\overline{y}}g_{s}(y)dG(y) =h_{s}(s ,G) ,\end{align*}where the inequality follows from the facts that $F \succeq _{2 ,[0 ,Rx -Rs +\overline{y}] -S}G$ and that $g_{s}(y)$ is $2 ,[0 ,Rx -Rs +\bar{y}]$ convex. Theorem 6.1 in \cite{topkis1978}
implies that $g(G) \geq g(F)$.           
\end{proof}

\begin{lemma}
\label{Lemma: suff prop}Let $F$ and $G$ be two distributions. Suppose that $F \succeq _{2 ,[a ,b] -S}G$. Then $F \succeq _{2 ,[a ,b^{ \prime }] -S}G$ for all $b^{ \prime } \in (a ,b)$. 
\end{lemma}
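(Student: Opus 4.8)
The plan is to show that the family of inequalities defining $F\succeq_{2,[a,b']-S}G$ is a subfamily of those defining $F\succeq_{2,[a,b]-S}G$, so that the implication is immediate. Write $\phi_c(x):=\max\{c_1-x,0\}\max\{c_2-x,0\}$ for $c=(c_1,c_2)$. By definition, $F\succeq_{2,[a,b]-S}G$ asserts $\int_a^b\phi_c\,dF\le\int_a^b\phi_c\,dG$ for every $c\in[a,b]^2$, while $F\succeq_{2,[a,b']-S}G$ asserts $\int_a^{b'}\phi_c\,dF\le\int_a^{b'}\phi_c\,dG$ for every $c\in[a,b']^2$ (here $F$ and $G$ are, as the definition of the sufficient order requires, distributions supported in $[a,b']$, which is the case in the application to Proposition~\ref{prop:consumption-savings}).

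First I would record the elementary observation that for $c_1,c_2\in[a,b']$ the function $\phi_c$ vanishes on $(b',b]$: if $x>b'\ge c_i$ then $c_i-x<0$, hence $\max\{c_i-x,0\}=0$, so $\phi_c$ is supported on $[a,\min\{c_1,c_2\}]\subseteq[a,b']$. Consequently $\int_a^b\phi_c\,dF=\int_a^{b'}\phi_c\,dF$ and $\int_a^b\phi_c\,dG=\int_a^{b'}\phi_c\,dG$ (the mass on $(b',b]$, if any, is integrated against the value $0$); the same identities follow equally directly from the fact that $F$ and $G$ put no mass outside $[a,b']$.

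Then I would conclude: fix $c\in[a,b']^2$. Since $b'<b$ we have $[a,b']^2\subseteq[a,b]^2$, so $c\in[a,b]^2$ and the hypothesis $F\succeq_{2,[a,b]-S}G$ applies to this $c$. Combining it with the previous paragraph,
\[
\int_a^{b'}\phi_c\,dF=\int_a^{b}\phi_c\,dF\le\int_a^{b}\phi_c\,dG=\int_a^{b'}\phi_c\,dG .
\]
As $c\in[a,b']^2$ was arbitrary, $F\succeq_{2,[a,b']-S}G$, which is the claim.

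There is essentially no obstacle here: shrinking the upper endpoint from $b$ to $b'$ only deletes the defining constraints indexed by pairs with $\max\{c_1,c_2\}>b'$ and leaves the integrals in the surviving constraints unchanged, so the condition can only get weaker. The same reasoning gives the analogous monotonicity statement for the $n,[a,b]$-sufficient order for every $n\in\N$, since $\prod_{i=1}^n\max\{c_i-x,0\}$ likewise vanishes for $x>b'$ whenever all $c_i\le b'$.
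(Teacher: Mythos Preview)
Your argument is correct and is in fact more direct than the paper's. The paper proves the lemma via the integral characterization of Proposition~\ref{prop:4}: it checks that conditions~\eqref{ine:1} and~\eqref{ine:2} hold on $[a,b']$, and since the factor $(b-c)$ in condition~\eqref{ine:1} changes to $(b'-c)$, this requires a short case split on the sign of $\int_a^c(F-G)$. Your route sidesteps this entirely by working straight from the definition: because $\phi_c$ vanishes on $(b',b]$ whenever $c_1,c_2\le b'$, the constraint indexed by $c\in[a,b']^2$ is literally the same integral inequality in both orders, and $[a,b']^2\subseteq[a,b]^2$ finishes it. This is strictly simpler, needs no auxiliary characterization, and, as you observe, generalizes verbatim to the $n,[a,b]$-sufficient order for every $n$. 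One small remark: your parenthetical about $F,G$ being supported in $[a,b']$ is not actually needed for the argument---the vanishing of $\phi_c$ on $(b',b]$ already gives $\int_a^b\phi_c\,dF=\int_a^{b'}\phi_c\,dF$ regardless of where $F$ puts its mass---so you can drop that caveat and the proof still stands.
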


\begin{proof}
Assume that $F \succeq _{2 ,[a ,b] -S}G$. Let $b^{ \prime } \in (a ,b)$ and $c \in [a ,b^{ \prime }]$.  

Note that
$F \succeq _{2 ,[a ,b] -S}G$ implies $\int _{a}^{c}\left (\int _{a}^{x}(F(z) -G(z))dz\right )dx \geq 0$. Thus, condition (\ref{ine:2}) holds.

If $\int _{a}^{c}(F(x) -G(x))dx \geq 0$ then \begin{equation*}(b^{ \prime } -c)\left [\int _{a}^{c}(F(x) -G(x))dx\right ] +2\int _{a}^{c}\left (\int _{a}^{x}(F(z) -G(z))dz\right )dx \geq 0.
\end{equation*}If $\int _{a}^{c}(F(x) -G(x))dx <0$ then 
\begin{align*}(b^{ \prime } -c)\left [\int _{a}^{c}(F(x) -G(x))dx\right ] +2\int _{a}^{c}\left (\int _{a}^{x}(F(z) -G(z))dz\right )dx \\
 \geq (b -c)\left [\int _{a}^{c}(F(x) -G(x))dx\right ] +2\int _{a}^{c}\left (\int _{a}^{x}(F(z) -G(z))dz\right )dx \geq 0\end{align*}
 where the last inequality follows because $F \succeq _{2 ,[a ,b] -S}G$. So condition (\ref{ine:1}) holds. 

We conclude that condition (\ref{ine:1}) and (\ref{ine:2}) hold for all $c \in [a ,b^{ \prime }]$. Thus, $F \succeq _{2 ,[a ,b^{ \prime }]-S}G$.
\end{proof}

\begin{proof}[Proof of Proposition \ref{Prop:Self-protection}]
The proof follows immediately from  Lemma \ref{Lemma:two-point} below. 
\end{proof}
\begin{lemma}\label{Lemma:two-point}
     Suppose that $X$ yields $x_{1}$ with probability $p$ and $x_{3}$ with probability $1-p$. $Y$ yields $x_{2}$ with probability $q$ and $x_{4}$ with probability $1-q$. 
     
     Suppose that the expected value of $X$ is higher than the expected value of $Y$, i.e., 
     \begin{equation}\label{Ineq:two-point-expectation}
         px_{1}+(1-p)x_{3} \geq  qx_{2}+(1-q)x_{4}. 
     \end{equation}
     
     Then $X\succeq_{2,[x_{1},x_{4}]-D} Y$ if and only if
     \begin{equation}p(x_{4}-x_{1})^{2} +(1 -p)(x_{4}-x_{3})^{2} \geq q(x_{4} -x_{2})^{2}. \label{Ineq:two-point-iff}
\end{equation}

\end{lemma}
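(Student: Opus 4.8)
The plan is to establish the two implications of the ``if and only if'' separately, using throughout the natural ordering $x_1\le x_2\le x_3\le x_4$ implicit in the statement and the notation $\mathcal D_{2,[a,b]}=-\mathcal I_{2,[a,b]}$. The forward direction is a one‑line test of the definition: the function $u_0(x):=\max\{x_4-x,0\}^2$ equals $(x_4-x)^2$ on $[x_1,x_4]$, is decreasing there, and $\big(u_0(x)-u_0(x_4)\big)^{1/2}=x_4-x$ is affine, hence convex, so $u_0\in\mathcal D_{2,[x_1,x_4]}$. If $X\succeq_{2,[x_1,x_4]-D}Y$ then $\E[u_0(X)]\ge\E[u_0(Y)]$, and since the atom of $Y$ at $x_4$ contributes nothing, $\E[u_0(X)]=p(x_4-x_1)^2+(1-p)(x_4-x_3)^2$ while $\E[u_0(Y)]=q(x_4-x_2)^2$; this is exactly \eqref{Ineq:two-point-iff}.

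For the converse, assume \eqref{Ineq:two-point-expectation} and \eqref{Ineq:two-point-iff} and let $F,G$ be the distributions of $X,Y$. By Proposition~\ref{prop:3} (with $\alpha=n=2$) it is enough to prove $G\succeq_{2,[x_1,x_4]-S}F$, since this yields $G\succeq_{2,[x_1,x_4]-I}F$, equivalently $F\succeq_{2,[x_1,x_4]-D}G$, i.e. $X\succeq_{2,[x_1,x_4]-D}Y$. So I must show that for all $c_1,c_2\in[x_1,x_4]$, which by symmetry we may take with $c_1\le c_2$, the quantity
\[\psi(c_1,c_2):=p\prod_{i=1}^2\max\{c_i-x_1,0\}+(1-p)\prod_{i=1}^2\max\{c_i-x_3,0\}-q\prod_{i=1}^2\max\{c_i-x_2,0\}\]
is nonnegative (the $x_4$‑atom of $G$ drops out because $c_i\le x_4$). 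On each cell of the grid generated by $x_1<x_2<x_3<x_4$ each $\max$‑term is linear in $c_1$ and in $c_2$, so $\psi$ is piecewise bilinear with all cell corners in $\{x_1,x_2,x_3,x_4\}^2$; a bilinear function on a rectangle attains its minimum at a corner, hence $\psi$ attains its minimum over $[x_1,x_4]^2$ at such a grid point. It therefore suffices to check the finitely many grid points with $c_1\le c_2$. When $c_1=x_1$ all three $\max\{c_1-x,0\}$ terms vanish on the relevant atoms, so $\psi=0$; at $(x_2,x_2)$, $(x_2,x_3)$, $(x_2,x_4)$ only the $x_1$‑term survives (the $(1-p)$‑ and $q$‑terms vanish), so $\psi\ge0$; and the three remaining points give $\psi(x_4,x_4)=p(x_4-x_1)^2+(1-p)(x_4-x_3)^2-q(x_4-x_2)^2$, $\psi(x_3,x_3)=p(x_3-x_1)^2-q(x_3-x_2)^2$, and $\psi(x_3,x_4)=p(x_3-x_1)(x_4-x_1)-q(x_3-x_2)(x_4-x_2)$, the $(1-p)$‑term vanishing in the latter two since $\max\{x_3-x_3,0\}=0$.

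The main work, and the step I expect to be the real obstacle, is to show these last three expressions are nonnegative. The first is precisely \eqref{Ineq:two-point-iff}. For the other two, put $t_i:=x_4-x_i$, so $t_1>t_2>t_3>0$ and $x_3-x_i=t_i-t_3$; then \eqref{Ineq:two-point-iff} reads $pt_1^2+(1-p)t_3^2\ge qt_2^2$ and \eqref{Ineq:two-point-expectation} rearranges to $pt_1+(1-p)t_3\le qt_2$, which in particular gives $qt_2-pt_1\ge(1-p)t_3\ge0$. Combining these two facts with the identity
\[p(t_1-t_3)^2-q(t_2-t_3)^2=\big[pt_1^2+(1-p)t_3^2-qt_2^2\big]+(2p-1-q)t_3^2+2t_3(qt_2-pt_1)\]
produces the lower bound $(1-q)t_3^2\ge0$, handling $\psi(x_3,x_3)$; the analogous manipulation $pt_1(t_1-t_3)-qt_2(t_2-t_3)=\big[pt_1^2+(1-p)t_3^2-qt_2^2\big]-(1-p)t_3^2+t_3(qt_2-pt_1)$ gives the lower bound $0$, handling $\psi(x_3,x_4)$. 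This would complete the verification that $G\succeq_{2,[x_1,x_4]-S}F$, and hence the lemma; the only case that needs any attention is the strict ordering $t_1>t_2>t_3>0$, the degenerate configurations being immediate.
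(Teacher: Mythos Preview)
Your proof is correct. Both you and the paper reduce the converse to the $2,[x_1,x_4]$-sufficient stochastic order, but the packaging differs. The paper invokes Proposition~\ref{prop:4} to translate the sufficient order into the two one-variable integral conditions \eqref{ine:1}--\eqref{ine:2}, then uses Corollary~\ref{coro:1} (which handles both implications at once) and checks those conditions by case analysis in the single parameter $c$, using convexity and linearity of the resulting expressions on the subintervals $[x_1,x_3]$ and $[x_3,x_4]$. You instead treat the forward direction separately with the single test function $(x_4-x)^2$ and, for the converse, work directly with the two-variable function $\psi(c_1,c_2)$, exploiting its piecewise bilinearity to reduce to the grid $\{x_1,x_2,x_3,x_4\}^2$; your algebraic verification at the three nontrivial grid points via the substitution $t_i=x_4-x_i$ is neat and matches in content the inequalities \eqref{Lemma:2 in.1}--\eqref{Lemma:2 in.4} the paper derives. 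Your route is slightly more elementary in that it avoids the integral formulas of Lemma~\ref{lemma:0} and Proposition~\ref{prop:4}; the paper's route has the advantage of illustrating Corollary~\ref{coro:1}. Both implicitly use the ordering $x_1<x_2<x_3<x_4$, which is the intended context.
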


\begin{proof}[Proof of Lemma~\ref{Lemma:two-point}]
Let $F$ be the distribution function of $X$ and let $G$ be the distribution function of $Y$. Let $c \in [x_{1} ,x_{4}]$. 

In Step 1 we show that condition~\eqref{ine:2} holds if and only if inequality (\ref{Ineq:two-point-iff}) holds. In Step 2 we show that if condition~\eqref{ine:2} holds then condition~\eqref{ine:1} also holds. Thus, from Corollary \ref{coro:1} inequality (\ref{Ineq:two-point-iff}) holds  if and only if $F\succeq_{2,[a,b]-D} G$.

\textbf{Step 1.} Condition~\eqref{ine:2} holds if and only if inequality (\ref{Ineq:two-point-iff}) holds. We consider two cases.

Case 1.  $x_{1} \leq c \leq x_{3}$. If $c \leq x_{2}$ condition~\eqref{ine:2} trivially holds. Suppose that $c >x_{2}$.  

Note that $\int _{a}^{c}\max (c -x ,0)^{2}dF(x) =p(c -x_{1})^{2}$ and  $\int _{a}^{c}\max (c -x ,0)^{2}dG(x) =q(c -x_{2})^{2}$. Thus, condition~\eqref{ine:2} holds if $\sqrt{p}(c -x_{1}) \geq \sqrt{q}(c -x_{2})$ for all $x_{2} \leq c \leq x_{3}$. The last inequality is linear in $c$ and clearly holds for $c =x_{2}$. So  it holds for all $x_{2} \leq c \leq x_{3}$ if it holds for $c =x_{3}$, i.e., the following inequality holds:
\begin{equation}\sqrt{p}(x_{3} -x_{1}) \geq \sqrt{q}(x_{3} -x_{2}) . \label{Lemma:2 in.1}
\end{equation}

Case 2. $x_{3} \leq c \leq x_{4}$. In this case  $\int _{a}^{c}\max (c -x ,0)^{2}dF(x) =p(c -x_{1})^{2} +(1 -p)(c -x_{3})^{2}$ and  $\int _{a}^{c}\max (c -x ,0)^{2}dG(x) =q(c -x_{2})^{2}$. 

Thus, condition~\eqref{ine:2} holds if    \begin{equation}p(c-x_{1})^{2} +(1 -p)(c -x_{3})^{2} \geq q(c-x_{2})^{2} \label{Lemma:2 in.2}
\end{equation}
for all $x_{3} \leq c \leq x_{4}$. Clearly, inequality (\ref{Lemma:2 in.2}) with $c =x_{3}$ is the same as inequality (\ref{Lemma:2 in.1}), so inequality (\ref{Lemma:2 in.2}) holds for all $x_{3} \leq c \leq x_{4}$ if and only if condition~\eqref{ine:2} holds.

 Consider the convex optimization problem \begin{equation*}\min _{x_{3} \leq c \leq x_{4}}k(c) : =p(c -x_{1})^{2} +(1 -p)(c -x_{3})^{2} -q(c -x_{2})^{2} .
\end{equation*}Note that $k^{ \prime }(x_{4}) \leq 0$ if and only if $(1 -q)x_{4} +qx_{2} \leq px_{1} +(1 -p)x_{3}$ which holds from our assumption (see inequality (\ref{Ineq:two-point-expectation})). Because $k$ is convex, $k^{ \prime }$ is increasing on $[x_{3} ,x_{4}]$, so $k^{ \prime }(c) \leq 0$ for all $x_{3} \leq c \leq x_{4}$. Thus, the optimal solution for the optimization problem $\min _{x_{3} \leq c \leq x_{4}}k(c)$ is $c =x_{4}$. 

This implies that inequality (\ref{Lemma:2 in.2}) holds for all $x_{3} \leq c \leq x_{4}$ if and only if $k(x_{4}) \geq 0$, i.e., 
\begin{equation}p(x_{4}-x_{1})^{2} +(1 -p)(x_{4}-x_{3})^{2} \geq q(x_{4} -x_{2})^{2}. \label{Lemma:2 in.3}
\end{equation}
We conclude that condition~\eqref{ine:2} holds if and only if inequality (\ref{Lemma:2 in.3}) holds.

{\bf Step 2.} Condition~\eqref{ine:2} implies condition~\eqref{ine:1}. We again consider two cases.

Case 1.  $x_{1} \leq c \leq x_{3}$. If $c \leq x_{2}$ condition~\eqref{ine:1}  trivially holds. Suppose that $c >x_{2}$. 

Note that $\int _{a}^{c}\max \{c -x ,0\}\max \{x_{4} -x ,0\}dF(x) =p(c -x_{1})(x_{4} -x_{1})$ and $\int _{a}^{c}\max (c -x ,0)\max (x_{4}-x ,0)dG(x) =q(c -x_{2})(x_{4} -x_{2})$. Thus, condition~\eqref{ine:1} holds if $p(c -x_{1})(x_{4} -x_{1}) \geq q(c -x_{2})(x_{4} -x_{2})$ for all $x_{2} \leq c \leq x_{3}$. The last inequality is linear in $c$ and clearly holds for $c =x_{2}$. So it holds for all $x_{2} \leq c \leq x_{3}$ if it holds for $c =x_{3}$, i.e., the following inequality holds:  \begin{equation}p(x_{3} -x_{1})(x_{4} -x_{1}) \geq q(x_{3} -x_{2})(x_{4} -x_{2}) . \label{Lemma:2 in.4}
\end{equation}

Case 2. $x_{3} \leq c \leq x_{4}$. In this case,  
\begin{equation*}\int _{a}^{c}\max (c -x ,0)\max (x_{4}-x ,0)dF(x) =p(c -x_{1})(x_{4}-x_{1}) +(1-p)(c -x_{3})(x_{4}-x_{3})
\end{equation*}

and $\int _{a}^{c}\max (c-x ,0)\max (x_{4} -x ,0)dG(x) =q(c -x_{2})(x_{4} -x_{2})$. Thus, condition~\eqref{ine:1} holds if \begin{equation*}w(c) : =p(c -x_{1})(x_{4} -x_{1}) +(1 -p)(c -x_{3})(x_{4} -x_{3}) -q(c -x_{2})(x_{4} -x_{2}) \geq 0
\end{equation*} for all $x_{3} \leq c \leq x_{4}$. Because $w(c)$ linear in $c$ it is enough to check for $c =x_{3}$ and $c =x_{4}$ to verify that $w(c) \geq 0$ holds for all $x_{3} \leq c \leq x_{4}$. Note that  
\begin{align*}w^{ \prime }(c) &=p(x_{4} -x_{1}) +(1 -p)(x_{4} -x_{3}) -q(x_{4} -x_{2}) \\
 &=qx_{2} +(1 -q)x_{4} -px_{1} -(1 -p)x_{3} \leq 0\end{align*}
 where the inequality follows from our assumption. Thus, if $w(x_{4}) \geq 0$ then $w(c) \geq 0$ holds for all $x_{3} \leq c \leq x_{4}$. Inequality (\ref{Lemma:2 in.3}) implies that $w(x_{4}) \geq 0$ so $w(x_{3}) \geq 0$, i.e., inequality (\ref{Lemma:2 in.4}) holds. 

Now note that inequality (\ref{Lemma:2 in.3}) holds if and only if $w(x_{4}) \geq 0$. We conclude that condition~\eqref{ine:2} implies condition~\eqref{ine:1}. 
\end{proof}

\begin{proof}
[Proof of Proposition~\ref{Prop: game}] 
Recall that a Bayesian Nash equilibrium (BNE) of the game is given by $(e_{1}^{ \ast } ,e_{2}^{ \ast } (\theta ))$ where
\begin{equation*}e_{1}^{ \ast } =\underset{e_{1} \in E}{\ensuremath{\operatorname*{argmax}}}\int _{0}^{1}e_{1} e_{2}^{ \ast } (\theta ) d F (\theta ) -c_{1} (e_{1})
\end{equation*}and
\begin{equation*}e_{2}^{ \ast } (\theta ) =\underset{e_{2} \in E}{\ensuremath{\operatorname*{argmax}}} \ e_{1}^{ \ast } e_{2} -\frac{e_{2}^{k +1}}{(k +1)(1 -\theta )^{l}} ,\text{\ for\ }\theta  \in [0 ,1).
\end{equation*}
The proof proceed with the following steps.

\textbf{Step 1}. $e_{2} (\theta ) =\underset{e_{2} \in E}{\ensuremath{\operatorname*{argmax}}}\ e_{1} e_{2} - \frac{e_{2}^{k +1}}{(k+1)(1 -\theta )^{l}}$ is decreasing and $\alpha ,[0 ,1]$-convex. 
Let $h(e_{2}) =e_{1} e_{2} - \frac{e_{2}^{k +1}}{(k+1)(1 -\theta )^{l}}$ . It is easy to see that $h$ is strictly concave. Because $\theta  \in [0 ,1)$, we have $h^{ \prime }\left (1\right ) =e_{1} -\frac{1}{\left (1 -\theta \right )^{l}} \leq 0$ for all $e_{1} \in E$. In addition $h^{ \prime } (0) =e_{1} \geq 0$ for all $e_{1} \in E$. 

We conclude that the first order condition $h^{ \prime} (e_{2}) =0$ holds for all for all $e_{1} \in E$. The first order condition implies that $e_{1} -\frac{e_{2}^{k}}{\left (1 -\theta \right )^{l}} =0$. Thus, $e_{2} (\theta ) =e_{1}^{1/k}\left (1 -\theta \right )^{l/k}$ is a decreasing and an $\alpha ,[0 ,1]$-convex function when $l \geq \alpha k$. 

\textbf{Step 2.} Denote by $ \Delta ([0 ,1])$ the set of all distributions over $[0 ,1]$. Define the operator $y :E \times  \Delta ([0 ,1]) \rightarrow E$ by
\begin{align*}y(e,F) =\underset{e_{1} \in E}{\ensuremath{\operatorname*{argmax}}} & \int _{0}^{1} (e_{1} \tilde{e}_{2} (\theta  ,e) -c_{1} (e_{1}))d F (\theta ) \\
 & \text{s.t.\ }\tilde{e}_{2} (\theta  ,e) =\underset{e_{2} \in E}{\ensuremath{\operatorname*{argmax}}}\ e e_{2} - \frac{e_{2}^{k +1}}{\left (k +1\right )\left (1 -\theta \right )^{l}}\text{.}\end{align*}
 We now show that the operator $y$ is increasing on $E \times  \Delta ([0 ,1])$ where $ \Delta ([0 ,1])$ is endowed with the $\alpha  ,[0 ,1]$-convex stochastic order, i.e., $y (e^{ \prime } ,F^{ \prime }) \geq y (e ,F)$ for all $e^{ \prime } \geq e$ and $F^{ \prime }  \succeq _{\alpha  ,[0 ,1] -D}F$.
 
 Suppose that $e^{ \prime } \geq e$ and fix $F \in  \Delta ([0 ,1])\text{.}$ Since $\tilde{e}_{2} (\theta  ,e)$ is increasing in $e$ for all $\theta  \in [0 ,1)$ (this follows from a standard comparative statics argument, see \cite{topkis1978}),
we have $\int _{0}^{1}\tilde{e}_{2} (\theta  ,e^{ \prime })d F (\theta ) \geq \int _{0}^{1}\tilde{e}_{2} (\theta  ,e)d F (\theta )$, which implies that $y (e ,F) \geq y (e^{ \prime } ,F)$. 

Now suppose that $F^{ \prime }  \succeq _{\alpha  ,[0 ,1] -D }F$, and fix $e \in E$. From Step 1, $\tilde{e}_{2} (\theta  ,e)$ is $\alpha  ,[0 ,1]$-convex and decreasing. Thus,  $\int _{0}^{1}\tilde{e}_{2} (\theta  ,e^{ \prime })d F^{ \prime } (\theta ) \geq \int _{0}^{1}\tilde{e}_{2} (\theta  ,e)d F (\theta )$, which implies that $y (e ,F^{ \prime }) \geq y (e,F)$. 

\textbf{Step 3.} From Step 2, $y :E \times  \Delta ([0 ,1]) \rightarrow E$ is an increasing map from the complete lattice $E$ into $E$. From Corollary 2.5.2 in \cite{topkis2011supermodularity},
the greatest fixed point of $y$ exists and is increasing in $F$ on $ \Delta ([0 ,1])$. 

Let $\bar{e}_{1} (F) =y (\bar{e}_{1} (F) ,F)$ be the greatest fixed point of $y$. Let $(\bar{e}_{1} (F) ,\bar{e}_{2} (\theta  ,F))$ be the corresponding BNE, i.e., $\bar{e}_{1} (F) =y (\bar{e}_{1} (F) ,F)$ and $\bar{e}_{2} (\theta  ,F)=\tilde{e}_{2} (\theta ,\bar{e}_{1} (F))$. Thus, if $F^{ \prime }  \succeq _{\alpha ,[0 ,1]-D}F$ we have 
\begin{equation*}m(F^{ \prime }) =\bar{e}_{1} (F^{ \prime }) \bar{e}_{2} (\theta  ,F^{ \prime }) =\bar{e}_{1} (F^{ \prime }) \tilde{e}_{2} (\theta  ,\bar{e}_{1} (F^{ \prime })) \geq \bar{e}_{1} (F) \tilde{e}_{2} (\theta , \bar{e}_{1} (F^{ \prime })) \geq \bar{e}_{1} (F) \tilde{e}_{2} (\theta  ,\bar{e}_{1} (F)) =m (F).
\end{equation*}
The first inequality follows from the fact that the greatest fixed point of $y$ is increasing in $F$. The second inequality follows from the fact that $\tilde{e}_{2}(\theta  ,e)$ is increasing in $e$.

This concludes the proof of the Proposition.
\end{proof}

\begin{proof}[Proof of Lemma~\ref{Lemma:1}]
The proof has two steps. We first show that inequality~\eqref{ine:dist} is a necessary and sufficient condition for condition~\eqref{ine:2} to hold. We next prove that it also implies condition~\eqref{ine:1}. From Corollary~\ref{coro:1}, we conclude that $F\succeq_{2,[a_1,b_1]-D} G$.

Before heading to the proof, by simple algebraic manipulations we obtain that
	$$ \int_{a_1}^c F(x)dx=	\frac{(c-a_1)^2}{2(b_1-a_1)}
	\; \mbox{ and } \; {
		\int_{a_1}^c G(x)dx= \begin{cases}
		0 &\mbox{ if } c\in[a_1,a_2)\\
		\frac{(c-a_2)^2}{2(b_2-a_2)} &\mbox{ if } c\in [a_2,b_2)\\
		c-\frac{a_2+b_2}2 &\mbox{ if } c\in[b_2,b_1]
		\end{cases}\;.} $$
	And similarly,
	$$ \int_{a_1}^c\int_{a_1}^x F(z)dzdx=\frac{(c-a_1)^3}{6(b_1-a_1)}
	\; \mbox{ and } \; {
		\int_{a_1}^c\int_{a_1}^x G(z)dzdx= \begin{cases}
		0 &\mbox{ if } c\in[a_1,a_2)\\
		\frac{(c-a_2)^3}{6(b_2-a_2)} &\mbox{ if } c\in [a_2,b_2)\\
		\frac{(b_2-a_2)^2}{6}+\frac{(c-a_2)(c-b_2)}{2} &\mbox{ if } c\in[b_2,b_1]
		\end{cases}\;.} $$ 

\textbf{Step 1.}	
Define $h(c):= \int_{a_1}^c\int_{a_1}^x F(z) - G(z) dzdx$, we look for $(a_1,b_1,a_2,b_2)$ for which  $h$ is non-negative on $[a_1,b_1]$. We separate our analysis in the following subintervals of $[a_1,b_2]$:
	\begin{itemize}
		\item 	For $[a_1,a_2]$,  clearly $h(c)$ is non-negative (independent of the parameters).
		
		\item For $(a_2,b_2)$, we claim that $h$ does not have any local minimum. To see this, suppose by contradiction that there is such a minimum $c^*$. Then, because $h$ is twice differentiable we must have that $h'(c^*)=0$ and $h''(c^*)\ge 0$. This two conditions are mutually impossible: 
		\begin{align*}
	h'(c^*)=0\iff	\frac{(c^*-a_1)^2}{2(b_1-a_1)} -  \frac{(c^*-a_2)^2}{2(b_2-a_2)} =0\;, \end{align*}
		dividing the equation, in each side, by $\frac{(c^*-a_1)}2$ we have that 
		$$ \frac{c^*-a_1}{b_1-a_1} -  \frac{c^*-a_2}{c^*-a_1}\frac{ c^*-a_2}{b_2-a_2} =0 \;.$$
		Because $a_1<a_2$ and $c^*\in (a_1,a_2)$, we have that  $ \frac{c^*-a_2}{c^*-a_1} < 1$. Hence, 
		$$  \frac{c^*-a_1}{b_1-a_1} - \frac{c^*-a_2}{b_2-a_2} <0 \iff h''(c^*)<0 \;.$$
We conclude that $h$ does not have a local minimum on $(a_2,b_2)$.
		\item For $[b_2,b_1]$, we claim that $h$ is strictly concave. Indeed, simple computations lead us to $h''(c) = -1 + \frac{c-a_1}{b_1-a_1}$, which is negative for $c\in (b_2,b_1)$. By the concavity of $h$, we have that $h\ge 0$ if and only if $h(b_2)$ and $h(b_1)$ are positive. 
	
		Suppose that $h'(b_2)\le 0$. By the concavity of $h$ we have that $h$ is decreasing over $[b_2,b_1]$. Thus $h\ge 0$ over $[b_2,b_1] $ if and only if $h(b_1)\ge 0$.
		
		 We assert that if $h'(b_2)>0$ then $h(b_2)\ge 0$. Suppose for the sake of contradiction that $h(b_2)<0$. Because $h(b_2)<0$, $h'(b_2)>0$ and $h(a_2)>0$, a local minimum exists over $(a_2,b_2)$. This contradicts the second bullet. Hence if $h'(b_2)>0$, then a necessary and sufficient condition for $h\ge0$ over $[b_2,b_1]$ is that $h(b_1)\ge 0$.
		
		We conclude that $h\ge 0$ over $[b_2,b_1] $ if and only if $h(b_1)\ge 0$. 

	\end{itemize}
	From the above discussion, we conclude that $h(c)\ge 0$ on $[a_1,b_1]$ if and only if $h(b_1)\ge 0$. Thus, condition~\eqref{ine:2} holds if and only if
	\begin{align}
	(b_1-a_1)^2 -3 (b_1-b_2)(b_1-a_2) \ge (b_2-a_2)^2 \label{ine_unif_2}  \;.
	\end{align}
	Solving  for $b_1$ we have that
	\begin{align*} 
	b_1&\ge \frac{3(a_2+b_2)-2a_1 - \sqrt{a_2^2+10a_2b_2+b_2^2-12a_1(a_2+b_2-a_1)}}4\\
	b_1&\le \frac{3(a_2+b_2)-2a_1 + \sqrt{a_2^2+10a_2b_2+b_2^2-12a_1(a_2+b_2-a_1)}}4\;.
	\end{align*}
	From the Lemma's assumption we have that $b_1> b_2+a_2-a_1$. We assert that this implies that  the first inequality always holds.  Indeed, observe that
\begin{align*}
& b_2+a_2-a_1 - \frac{3(a_2+b_2)-2a_1 - \sqrt{a_2^2+10a_2b_2+b_2^2-12a_1(a_2+b_2-a_1)}}4\\& \quad >  b_2+a_2-a_1 - \frac{3(a_2+b_2)-2a_1 }4
=\frac{b_2+a_2-2a_1}4 >0\;,
\end{align*}
where the last inequality follows from $a_1<a_2<b_2$. Therefore, condition~\eqref{ine:2} holds if and only inequality~\eqref{ine:dist} holds.

\textbf{Step 2.}
 We show that condition (\ref{ine:2}) implies condition (\ref{ine:1}). Define $g(c):= (b_1-c) \int_{a_1}^c F(x) - G(x) dx +2 (\int_{a_1}^c\int_{a_1}^x F(z) - G(z) dzdx)$. Similar to the first step, we separate our analysis in the following subintervals of $[a_1,b_2]$:

\begin{itemize}
	\item For $[a_1,a_2]$, trivially, $g$ is non-negative.
	\item For $[b_2,b_1]$, we claim that $g$ is strictly convex. Indeed, $g''(c)=\frac {b_1-c}{b_1-a_1}>0$. Because $g'(b_1)= \frac{b_2+a_2-(b_1-a_1)}{2}$, which is strictly negative by the Lemma's assumption, we have that $g$ is decreasing on $[b_2,b_1]$. Therefore, $g\ge 0$ on $[b_2,b_1]$ if and only if $0\le g(b_1)=h(b_1)$.
	
	\item For the case $(a_2,b_2)$, we claim that $g$ does not have a local minimum. To prove this, we show that $g$ is concave. Indeed, for $c\in (a_2,b_2)$ we have that $g''(c)= (b_1-c)(\frac 1{b_1-a_1}- \frac 1{b_2-a_2})<0$. 
\end{itemize}
From the above discussion we conclude that condition~\eqref{ine:1} holds if and only if $h(b_2)\ge 0$ which is equivalent to inequality~\eqref{ine:dist}.
\end{proof}

\begin{proof}[Proof of Proposition~\ref{prop:HH ineq}] 
For $t =1/3$ the right-hand-side inequality follows from Example 3. For  $t \geq \frac{1}{3}$  the right-hand-side inequality follows from the fact that $f$ is decreasing.  

We now prove the left-hand-side of the inequality. Let $f \in \mathcal{D}_{2 ,[a ,b]}$ and $a <b$.  

From Lemma \ref{Lemma:1} we have \begin{equation}\frac{1}{b -a}\int _{a}^{b}f(x) \geq \frac{1}{b_{n} -a_{n}}\int _{a_{n}}^{b_{n}}f\left (x\right )dx \label{Ineq: H pf1}
\end{equation}for all $(a_{n} ,b_{n})$ such that \begin{equation}4b \leq 3\left (a_{n} +b_{n}\right ) -2a +\sqrt{a_{n}^{2} +10a_{n}b_{n} +b_{n}^{2} -12a\left (a_{n} +b_{n} -a\right )} \label{Ineq: H pf2}
\end{equation}
and $a <a_{n} <b_{n} <b$. Now suppose that $(a_{n} ,b_{n})_{n =1}^{\infty }$ is a sequence of numbers such that $a_{n} \rightarrow \theta $ and $b_{n} \rightarrow \theta $, and inequality (\ref{Ineq: H pf2}) and the inequalities $a <a_{n} <b_{n} <b$ hold for all $n$. We have \begin{equation}\frac{1}{b -a}\int _{a}^{b}f\left (x\right )dx \geq \lim _{n \rightarrow \infty }\frac{1}{b_{n} -a_{n}}\int _{a_{n}}^{b_{n}}f(x)dx =\lim _{n \rightarrow \infty }\frac{1}{b_{n} -a_{n}}f(\zeta _{n})(b_{n} -a_{n}) =f\left (\theta \right ). \label{Ineq: H pf3}
\end{equation}
The first equality follows from the mean value theorem for integrals (note that $f$ is continuous on $[a_{n} ,b_{n}]$ because it is convex on $[a ,b]$). The second equality follows since $\zeta _{n} \in \left (a_{n} ,b_{n}\right )$ for all $n$.

Let $0 <\lambda  <1$ be such that $\theta  =\lambda b +(1 -\lambda )a$. Suppose that  $0 <\lambda  <1$ is chosen such that inequality (\ref{Ineq: H pf2}) holds as equality when $a_{n} \rightarrow \theta $ and $b_{n} \rightarrow \theta$. We have 
\begin{align*}4b &  =6\theta -2a +\sqrt{12 \theta^{2} -12a(2\theta -a)} \\
 & = 6\left (\lambda b +\left (1 -\lambda \right )a\right ) -2a +\sqrt{12\left (\lambda b +\left (1 -\lambda \right )a\right )^{2} -12a(2(\lambda b +\left (1 -\lambda \right )a) -a)} \\
 &  \Leftrightarrow 4b -4a =6\lambda \left (b -a\right ) +\sqrt{12(\lambda b +\left (1 -\lambda \right )a -a)^{2}} \\
 &  \Leftrightarrow 2b -2a =3\lambda \left (b -a\right ) +\sqrt{3}\lambda \left (b -a\right ) \\
 &  \Leftrightarrow \lambda  =\frac{2}{3 +\sqrt{3}} .\end{align*}From inequality (\ref{Ineq: H pf3}) and the fact that $f$ is decreasing we have \begin{equation*}f\left (\gamma b +\left (1 -\gamma \right )a\right ) \leq f\left (\frac{2}{3 +\sqrt{3}}b +\left (1 -\frac{2}{3 +\sqrt{3}} \right )a\right ) \leq \frac{1}{b -a}\int _{a}^{b}f\left (x\right )dx
\end{equation*}
for all $\gamma  \geq \frac{2}{3 +\sqrt{3}}$. This completes the proof of the Proposition.     
\end{proof}

\subsection{Proof of Proposition \ref{prop:alpha-maximal}}

\begin{proof}[Proof of Proposition~\ref{prop:alpha-maximal}]
Suppose that $F \succeq_{\alpha-DCX} G$. We proceed with the following steps. 

	\textbf{Step 1} We assert that if $u:[a,b]\to \R_+$  is decreasing, nonnegative, twice differentiable, with $u''>0$ (i.e., $u$ is strictly convex), then there exists a $C>0$ large enough such that $u+C$ is alpha convex. 

From compactness there exists an $\epsilon >0$ such that for every $x\in [a,b]$, we have $u''(x)>\epsilon$. Let $M=\frac {\alpha-1}{\alpha}\max_{\{x\in [a,b]\}} u'(x)^2$. Because $u$ is nonnegative, we have that
	$$\big(u(x) +\underbrace{\frac{M}{\epsilon}}_{C}\big) u''(x) \ge u'(x)^2 \frac {\alpha-1}{\alpha} \mbox{ for every }x\in [a,b] \;.$$
	We conclude that the function $\tilde u := u +C$ is an $\alpha$-convex function.
	
	\textbf{Step 2} We assert that if $F\succeq_{\alpha-DCX}G$, then for every decreasing, twice differentiable, and strictly convex function $u:[a,b]\to \R_+$ we have
	$$\int_a^b u(x) dF(x) \ge \int_a^b u(x)  dG(x)\;.$$
	Let $u:[a,b]\to \R_+$ be decreasing, twice differentiable, and strictly convex. From Step 1, there exists a $C>0$ such that $u+C$ is $\alpha$-convex. Therefore, if $F\succeq_{\alpha-DCX}G$ we have 
	$$\int_a^b u(x) +C dF(x) \ge \int_a^b u(x) +C dG(x) \iff 	\int_a^b u(x) dF(x) \ge \int_a^b u(x)  dG(x)\;.$$
	
	\textbf{Step 3} Define $u_n(x)= u(x) + \frac 1 n (x-b)^2$. Clearly $u_n$ is decreasing, twice differentiable, and $u_n''(x)=u''(x) +\frac {2}{n}\ge\frac {2}{n}$, where the last inequality holds because $u$ is convex. Because the sequence $u_n$ is bounded and converges pointwise to $u$. From the dominated convergence theorem we get that 
	$$\lim \int_a^b u_n(x) dF(x) = \int_a^b u(x) dF(x) \mbox{ and } \lim \int_a^b u_n(x) dG(x) = \int_a^b u(x) dG(x)\;.$$

From Step 2 and Step 3, we have that if $F\succeq_{\alpha-DCX}G$ then for every decreasing, twice-differentiable, and convex function $u:[a,b]\to \R_+$ we have $\int_a^b u(x) dF(x) \ge \int_a^b u(x)  dG(x)$. 
\end{proof}

\begin{proof}[Proof of Proposition \ref{prop: maximal of sufficient}]
Assume that $\int_a^b u(x) dF(x) \geq \int_a^b u(x) dG(x)$ for all $u \in  AP_{2,[a,b]} + \mathcal{I}_{2,[a,b]}$.

Because $AP_{2,[a,b]}$ and $\mathcal{I}_{2,[a,b]}$ contain the zero function we have $\mathcal{I}_{2,[a,b]} \subseteq AP_{2,[a,b]} + \mathcal{I}_{2,[a,b]}$ and $AP_{2,[a,b]} \subseteq AP_{2,[a,b]} + \mathcal{I}_{2,[a,b]}$. Hence, the $2,[a,b]$-concave function $u = - \max\{c-x,0\}^{2}$ belongs to $AP_{2,[a,b]} + \mathcal{I}_{2,[a,b]}$. Thus, 
 $$  \int_a^b \max\{c-x,0\}^{2} dF(x) \leq  \int_a^b \max\{c-x,0\}^{2} dG(x).$$
 From the proof of Proposition \ref{prop:4} it is enough to show that 
 $$  \int_a^b \max\{c-x,0\} (b-x) dF(x) \leq \int_a^b \max\{c-x,0\} (b-x) dG(x)$$ 
 in order to prove that $F \succeq_{2,[a,b]-S} G$. Let $c \in [a,b] $. Integration by parts (see Lemma \ref{lemma:0}) implies
 \begin{align*}
     \int_a^b \max\{c-x,0\} (b-x) dF(x) & =  \int_a^c (c-x) (b-x) dF(x) \\
     & =  - \int_a^c (-c-b+2x)F(x) dx  \\
     & =   \int_a^c (c-x)F(x) dx  + \int_a^c (b-x)F(x) dx \\  
     & = \frac{1}{2} \int_a^b  \max\{c-x,0\}^{2} dF(x) +\frac{1}{2} \int_a^c F(x) dk(x)   
 \end{align*}
where $k(x):=-(b-x)^{2}$. 

Note that $k$ is strictly increasing on $[a,b]$ and $-k''(x)/k'(x)=1/(b-x)$. Hence, 
from Theorem 2 in \cite{meyer1977second} the fact that $\int_a^b u(x) dF(x) \geq \int_a^b u(x) dG(x)$ for all $u \in AP_{2,[a,b]}$ implies that $\int_a^c F(x) dk(x) \leq \int_a^c G(x) dk(x)$. Thus, 
 \begin{align*}
     \int_a^b \max\{c-x,0\} (b-x) dF(x) 
     & = \frac{1}{2} \int_a^b  \max\{c-x,0\}^{2} dF(x) +\frac{1}{2} \int_a^c F(x) dk(x) \\
     & \leq  \frac{1}{2} \int_a^b  \max\{c-x,0\}^{2} d G(x)  +\frac{1}{2} \int_a^c G(x) dk(x) \\
     & = \int_a^b \max\{c-x,0\} (b-x) dG(x). 
 \end{align*}
We conclude that
 $F \succeq_{2,[a,b]-S} G$. 
\end{proof}

\bibliographystyle{ecta}
\bibliography{alphaconvex}

\end{document}